\g@addto@macro\bfseries{\boldmath}
\numberwithin{equation}{section}
\newtheorem{theorem}{Theorem}[section]
\newtheorem{definition}[theorem]{Definition}
\newtheorem{lemma}[theorem]{Lemma}
\newtheorem{proposition}[theorem]{Proposition}
\newtheorem{corollary}[theorem]{Corollary}
\newtheorem{remark}[theorem]{Remark}
\def\Sbar{{\overline{S}}}
\def\d{{\bm{d}}}
\newcommand{\Bin}[1]{\textup{Bin}\left( #1 \right)}
\def\Gnd{\mathcal{G}(\d)}
\def\Gnp{\mathcal{G}(n,p)}
\def\co{{J}}
\newcommand{\cop}[1][p]{\co(#1)}
\def\bo{{\bar{\co}}}
\def\Ds{\mathcal{D}_S}
\def\goodds{\mathcal{D}_S'}
\def\Dsp{\mathcal{D}}
\def\gooddsp{\mathcal{D}(p)}
\newcommand{\tikzoverset}[2]{%
	\tikz[baseline=(X.base),inner sep=0pt,outer sep=0pt]{%
		\node[inner sep=0pt,outer sep=0pt] (X) {$#2$}; 
		\node[yshift=1pt] at (X.north) {$#1$};
}}
\def\MJ{\tikzoverset{\sim}{M}}
\newcommand{\ytil}[1]{\tilde{y}_{#1}}
\def\Ssmall{S_{\rm{small}}}
\def\Sbig{S_{\rm{big}}}
\def\deg{\d}
\def\Cnd{\mathcal{C}(\d)}
\newcommand{\prob}{\mathbb{P}}
\newcommand{\Prob}[1]{\prob\left(#1\right)}
\newcommand{\CProb}[2]{\prob\left(\left.#1\right|#2\right)}
\newcommand{\expec}{\mathbb{E}}
\newcommand{\Exp}[1]{\expec\left[#1\right]}
\newcommand{\Var}[1]{\textup{Var}\left(#1\right)}
\newcommand{\Cov}[1]{{\mbox{Cov}\left(#1\right)}}
\newcommand{\aas}{a.a.s.}
\newcommand{\ind}[1]{\mathbbm{1}_{\left\{#1\right\}}}
\title{Large induced subgraphs of random graphs with given degree sequences}
\author{Angus Southwell
		\thanks{Research supported by Australian Government Research Training Program Scholarship.}\\
		{\small School of Mathematics}\\
 {\small Monash University}\\
 {\small VIC 3800 Australia}\\
  {\small {\tt  angus.southwell@monash.edu}}
		\and 
		Nicholas Wormald
		\thanks{Research supported by ARC Discovery Project DP180103684.}\\
		{\small School of Mathematics}\\
 {\small Monash University}\\
 {\small VIC 3800 Australia}\\
  {\small {\tt nick.wormald@monash.edu }}}
\date{}
\begin{document}
	
\maketitle

\begin{abstract}
	We study a random graph $G$ with given degree sequence $\d$, with the aim of characterising the degree sequence of the subgraph induced on a given set $S$ of vertices. For suitable  $\d$ and $S$, we show that the degree sequence of the subgraph induced on $S$ is essentially concentrated around a sequence that we can deterministically describe in terms of $\d$ and $S$.  We then give an application of this result, determining a threshold for when this induced subgraph contains a giant component. We also apply a similar analysis to the case where $S$ is chosen by randomly sampling vertices with some probability $p$, i.e. site percolation, and determine a threshold for the existence of a giant component in this model. We consider the case where the density of the subgraph is either constant or slowly going to $0$ as $n$ goes to infinity, and the degree sequence $\d$ of the whole graph satisfies a certain maximum degree condition. Analogously, in the percolation model we consider the cases where either $p$ is a constant or where $p \to 0$ slowly. This is similar to work of Fountoulakis in 2007 and Janson in 2009, but we work directly in the random graph model to avoid the limitations of the configuration model that they used.
\end{abstract}

\section{Introduction}\label{sec:intro}

 	Random graphs with given degree sequences are a well-studied random graph model. To define the model, let $\d = (d(1), \dots, d(n))$ be the degree sequence of a graph. Then $\Gnd$ is a uniform random graph with degree sequence $\d$. This graph model has been the focus of much study recently, both due to improvements in the tools to study the model and also as it has found applications as a null model for studying networks (see \cite{KlamtEtAl09}). Compared to binomial random graphs, this model is much better suited to studying properties of graphs where the degrees of the vertices are not concentrated around a particular value. However, this comes at the cost of ease of analysis: events that are trivial to study in the binomial random graph model (such as the adjacency of two vertices) are quite non-trivial in $\Gnd$ and not fully understood in general. 
	
	In this paper we study induced subgraphs of random graphs with given degree sequences, i.e.\ the degree sequence of the subgraph $G[S]$ of $G\in \Gnd$ induced by $S\subseteq V(G)$. Our main results are that the degree sequence of the induced subgraph is close to a model degree sequence  $\d_H$ defined in \Cref{def:dh}. In particular, the distribution of the degree of a vertex in $G[S]$ is approximately binomial, in terms of its degree in $G$ and the density of $S$ in $G$ (see \Cref{sec:results} for a precise statement). We use this approximation to show that with probability tending to 1 as $n\to\infty$ (\aas),  the large entries	in the degree sequence of $G[S]$   are asymptotically equal to the  corresponding entries in $\d_H$, and the frequencies of small entries in each sequence are also close. We formally state this in \Cref{lem:dh-ds-comparison}. 
	
	The result mentioned above applies to a given subset $S=S(n)$. We also make use of it to prove a similar result about the model where $G$ is again a uniformly random graph with degree sequence $\d$, but where $S$ is chosen randomly by taking each vertex independently with probability $p=p(n)$. This model is commonly known as \emph{(site-)percolated random graph $\Gnd$ with survival probability $p$}. This is in contrast to \emph{bond percolation}, where edges are deleted instead of vertices. Percolation problems have been studied on a wide range of graphs, both deterministic and random, since the 1950s. See, for example, the work of \citet{BroadbentHammersley57,Fountoulakis07,Janson2009}, or \citet*{McDiarmidScottWithers2021}. In this paper, we use the phrase ``percolated random graph'' and the notation $G_\d(p)$ to refer to a uniformly random graph with degree sequence $\d$ after site percolation with survival probability $p$. In this model, $S$ is a random variable, where each subset $S\subset [n]$ occurs with probability $p^{|S|}(1-p)^{n-|S|}$. We define a model sequence $\d_A$ (formally given in \Cref{def:perc-seq}), which is a function of $\d$ and $p$, and show that, for suitable $\d$ and $p$, the degree sequence of the percolated random graph is close to the model sequence $\d_A$.  
	
	The relationship between the large entries of $\d_A$ and $\d_S$ in the percolated random graph model is less precise than the corresponding relationship between $\d_H$ and $\d_S$ in the model where $S$ is given. Instead of estimating each entry, we estimate the degree of a vertex conditional on it being a member of $S$. As well as this, we give a result about the sum of the large entries in each sequence.
	This is because of the potential lack of concentration of large degrees in the induced subgraph. For instance, if only one vertex $v$ exists of very large degree $i$, and the rest have small degree $j$, then the maximum degree in the induced subgraph will be close to $ip$ with probability  $p$ (in the case that $v\in S$), and at most $j$ otherwise. 

	A common problem studied in random graphs and percolation models is the existence of a giant component. \citet{Molloy-Reed1995} used the configuration model (often denoted $\Cnd$) to investigate the existence of a giant component in $\Gnd$. 
	The configuration model is a model proposed by \citet{Bollobas1980config} to study $\Gnd$ which constructs a random (pseudo)graph with the correct degree sequence from a random pairing of sets of points in bins corresponding to the vertices. It is much easier to analyse than $\Gnd$, but the need to transfer results to $\Gnd$ resulted in strict conditions on the degree sequence $\d$ in~\cite{Molloy-Reed1995}.
	Recently, \citet*{Joos2018} generalised this to fully describe the threshold for the existence of a giant component in $\Gnd$ in terms of $\d$, for all sequences $\d$.
	
	The results of \citet{Molloy-Reed1995} were used by \citet{Fountoulakis07} to study the threshold for the existence of a giant component in a percolated random graph. 
	Again this was done by studying the configuration model. A key element of his proof is the following fact: the percolated random graph is distributed uniformly at random conditioned on its degree sequence. He then studied the distribution of the resulting degree sequence in both site and bond percolation models. This result has strict requirements on $\d$, such as a maximum degree of at most $n^{1/9}$, bounded average degree, and a sufficiently nice limiting distribution.
	Fountoulakis then applied the aforementioned results of \cite{Molloy-Reed1995} to prove a threshold for the existence of a giant component in (site or bond) percolated $\Gnd$. Janson \cite{Janson2009} used similar ideas and tools from the theory of branching processes to prove a similar result for a wider range of degree sequences. 
	Recently, \citet*{Fountoulakis2016} used results in \cite{Joos2018} to prove results about the threshold for the existence of a giant component in bond percolated $\Gnd$. These results apply for a wider range of degree sequences than considered in \cite{Fountoulakis07} and \cite{Janson2009}, but also assume that $\Gnd$ has bounded average degree and that the survival probability $p \in (0,1]$ is a constant.

In this paper we apply the recent result of \citet{Joos2018} and results about our model degree sequences ($\d_H$ for when $S$ is fixed and $\d_A$ for when $S$ is random) to determine a threshold for the existence of a giant component in $G[S]$. This serves as an example of how our main result can be used to study induced subgraphs of $\Gnd$: by combining this result with known thresholds for properties of $\Gnd$, one can determine thresholds for these properties in $G[S]$.
Notably, our results allow for cases where $\Gnd$ has  maximum degree slightly less than $\sqrt{|E(G)|}$ (see \labelcref{eqn:conditions} for the precise condition),
as long as the density of the subgraph (relative to the whole graph $G$) is either bounded away from 0 and 1, or  goes to $0$ sufficiently slowly (roughly up to $n^{-\varepsilon}$ for some small constant $\varepsilon > 0$). We can achieve this extension of the results in \cite{Fountoulakis07} on site percolation by carrying out our analysis in $\Gnd$ directly, as opposed to using the configuration model, and we utilise the switching method heavily. In contrast to the results in \cite{Fountoulakis2016} on bond percolation, our results also apply in cases where $\Gnd$ has average degree only slightly less than  $\sqrt{|E(G)|}$   and the survival probability is at least $|E(G)|^{-\varepsilon'}$, for a small constant $\varepsilon'$.  In particular,  for nearly regular degree sequences $\d$  (e.g.\ typical degree sequences arising from $\Gnp$ or $\mathcal{G}(n,m)$), our results apply when the maximum degree is  $O(n^{1-\varepsilon})$ for any $\varepsilon>0$.  In upcoming work, we also use our degree sequence characterisation to prove thresholds for the connectivity of $G[S]$, as well as results on its chromatic number and its automorphism group.

\section{Main results}\label{sec:results}

	Here we give much of the notation we use, as well as describing our main results. Let $\d$ be a graphical sequence of length $n$, that is, let $\d = (d(1), \dots, d(n))$ be a sequence of non-negative integers such that there exists a graph with vertex set $[n] = \{1,\dots, n\}$ where each vertex $i \in [n]$ has degree $d(i)$. Without loss of generality, we assume that all entries of $\d$ are at least $1$ and are in non-decreasing order, so $1\leq d(1) \leq d(2) \leq \dots \leq d(n)$. We also define $\Delta  = \Delta(\d)$ to be the value of the largest entry in $\d$. For a set $A \subseteq [n]$, let $d(A) = \sum_{i \in A} d(i)$ be the \emph{total degree} of $A$. We also use $M(\d)$ to denote $d([n])$, and call it the \emph{total degree} of a sequence $\d$. For brevity, we use $M$ to denote $M(\d)$ where $\d$ is the degree sequence of the underlying random graph $G \in \Gnd$. We always use $S = \{i_1,\dots, i_s\} \subset [n]$ to denote the vertices of the induced subgraph of $\Gnd$, and we define $\Sbar = [n]\backslash S$ and $\gamma = \gamma(S) = d(S)/M$.

	\subsection{Subgraphs induced on a vertex set}
		Suppose $S=S(n)\subseteq [n]$, and suppose $(\d,S)$ satisfies
		\begin{align}\label{eqn:conditions}
			\Delta^2 (\gamma^{-1} \log M)^{12} \leq \delta d(S)
		\end{align}
		for some $\delta \to 0$ sufficiently slowly as $n\to\infty$ (equivalently, $M \to\infty$). Throughout the proofs, we use $\delta$ and various powers of it to bound the rate at which certain functions grow or shrink. We assume that $\delta = \Omega((\log\log M)^{-1})$, or equivalently that $\delta^{-1} = O(\log\log M)$. Define $\co = \delta^{-1/16} \gamma^{-1} \log M$. 
		We suppose that $\gamma < 1-c$ for some constant $c > 0$, but we allow $\gamma=\gamma(n) \to 0$. That is, $d(\Sbar) \geq cM$ for some constant $c > 0$, but $d(S) = o(M)$ is possible. The condition on $\gamma$  given in \labelcref{eqn:conditions} implies that 
		\begin{align*}
			\gamma \geq \delta^{-1/13}\frac{(\Delta^2 \log^{12}M)^{1/13}}{M^{1/13}}.
		\end{align*}
		This immediately implies that $\gamma = \omega(M^{-1/13})$, a fact used throughout the proofs. 
		The powers of $\log M$ in our definitions and results are not necessarily optimised, either for studying the distribution of the induced degree sequence in general or for studying the threshold for the existence of giant components.
		
		We also note that the conditions given in \labelcref{eqn:conditions} imply that $\Gnd$ is non-empty. 
		
		\begin{proposition}\label{prop:graphical}
			If $\d$ is a sequence of length $n$ with even sum such that there exists a set $S \subset [n]$ satisfying \eqref{eqn:conditions}, then $\d$ is graphical, that is, there exists a graph with degree sequence $\d$.
		\end{proposition}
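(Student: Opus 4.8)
The plan is to verify the Erdős–Gallai conditions for $\d$, using the hypothesis \eqref{eqn:conditions} to control the gap between the left and right sides. Recall that a non-increasing integer sequence is graphical iff its sum is even and, for each $k$, the sum of the $k$ largest entries is at most $k(k-1) + \sum_{i}\min\{k, \text{(remaining entries)}\}$. Here our convention is non-\emph{decreasing} order, so I would first reindex: write $d^{(1)} \ge d^{(2)} \ge \cdots \ge d^{(n)}$ for the entries of $\d$ sorted in non-increasing order, so $d^{(1)} = \Delta$. The even-sum hypothesis is given, so it remains to check the Erdős–Gallai inequality
\[
	\sum_{i=1}^{k} d^{(i)} \;\le\; k(k-1) + \sum_{i=k+1}^{n} \min\{k, d^{(i)}\}
\]
for every $1 \le k \le n$.

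The key observation is that \eqref{eqn:conditions} forces $\d$ to be quite dense: since $\Delta^2 (\gamma^{-1}\log M)^{12} \le \delta d(S)$ and $d(S) = \gamma M \le M$, we get $\Delta \le (\delta M)^{1/2} (\gamma^{-1}\log M)^{-6} = o(M^{1/2})$, and moreover the excerpt already extracts $\gamma = \omega(M^{-1/13})$, hence $n \ge d(S)/\Delta \ge \gamma M /\Delta$, which together with $\Delta = o(M^{1/2})$ gives $n \to \infty$ and in fact $n \gg M^{1/2}/\text{(polylog)}$ — more than enough room. I would split into two ranges of $k$. For \emph{small} $k$, say $k \le \Delta$: the left side is at most $k\Delta$, and the right side is at least $k(k-1)$ plus a contribution from the remaining $n-k$ entries, each of which is at least $1$, so the right side is at least $k(k-1) + (n-k)\cdot 1 \ge k(k-1) + (n-\Delta)$. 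Since $n - \Delta \ge \gamma M/\Delta - \Delta \gg \Delta^2 \ge k\Delta$ (using $\Delta = o(M^{1/2})$ and $\gamma M \gg \Delta^3$, which follows from \eqref{eqn:conditions} because $\gamma M = \gamma^{-1}\cdot \gamma^2 M \ge \Delta^2 (\log M)^{12}\delta^{-1}\gamma^{-1}\cdot(\text{something})$ — I would just bound $\gamma^2 M^2 \ge \Delta^2(\log M)^{12}$ so $\gamma M \ge \Delta (\log M)^6$ and then $\gamma M \ge \Delta (\log M)^6 \gg \Delta^3$ provided $\Delta^2 = o((\log M)^6)$, which may fail — so more care is needed here). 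For \emph{large} $k$, say $k > \Delta$: then $\min\{k, d^{(i)}\} = d^{(i)}$ for all $i$, so the right side equals $k(k-1) + \sum_{i>k} d^{(i)} = k(k-1) + M - \sum_{i\le k} d^{(i)}$, and the inequality becomes $2\sum_{i\le k} d^{(i)} \le k(k-1) + M$, which holds because $\sum_{i\le k} d^{(i)} \le k\Delta$ and $2k\Delta \le k(k-1)$ once $k \ge 2\Delta + 1$, while for $\Delta < k \le 2\Delta$ one uses $2\sum_{i \le k} d^{(i)} \le 2k\Delta \le 4\Delta^2 \le M$ since $\Delta = o(M^{1/2})$.

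The main obstacle is the intermediate regime around $k \approx \Delta$ where neither the crude bound "left side $\le k\Delta$, right side $\ge k(k-1) + (n-k)$" nor the "large $k$" argument is immediately decisive, and where one genuinely needs that the number of vertices is large enough relative to $\Delta^2$. The cleanest fix is probably to prove the single clean inequality $4\Delta^2 \le M$ (equivalently $\Delta \le \tfrac12 M^{1/2}$, immediate from \eqref{eqn:conditions} since $\Delta^2 \le \delta M$ with $\delta \to 0$) together with $n - \Delta \ge \Delta$ (i.e.\ $n \ge 2\Delta$, which follows from $n \ge \gamma M/\Delta$ and $\gamma M \ge \Delta(\log M)^6 \ge 2\Delta^2$ — and here $(\log M)^6 \ge 2\Delta$ need NOT hold, so instead I would use $n \ge$ (number of vertices of degree $< \Delta$) and a direct counting argument: if $n < 2\Delta$ then $M \le n\Delta < 2\Delta^2 \le 2\delta M < M$ for large $n$, a contradiction). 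Once $n \ge 2\Delta$ and $M \ge 4\Delta^2$ are in hand, all three ranges of $k$ close with room to spare, so I would organise the writeup around establishing those two inequalities first and then doing the three-way case split on $k$.
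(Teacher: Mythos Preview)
Your plan via Erd\H{o}s--Gallai is sound in principle, but the fix you propose at the end does not close the small-$k$ case. From $M\ge 4\Delta^2$ and $M\le n\Delta$ you only get $n\ge 4\Delta$, whereas your crude lower bound $\sum_{i>k}\min\{k,d^{(i)}\}\ge n-k$ (using minimum degree $1$) would require $k(\Delta-k+2)\le n$, hence $n\gtrsim\Delta^2/4$ at $k\approx\Delta/2$. This genuinely fails under \eqref{eqn:conditions}: take $\d$ to be $\Delta$-regular with $\Delta=n^{0.9}$ and $S$ any half of the vertices; then $M=n^{1.9}$, $\Delta^2=n^{1.8}$, and \eqref{eqn:conditions} holds, yet $n\ll\Delta^2$. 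For $k=\lfloor\Delta/2\rfloor$ your left-side bound $k\Delta\approx n^{1.8}/2$ exceeds your right-side bound $k(k-1)+(n-k)\approx n^{1.8}/4$. The sequence is of course graphical (the true right side uses $\min\{k,d^{(i)}\}=k$ here, not $1$); the problem is only that bounding $\min\{k,d^{(i)}\}$ below by $1$ discards too much.

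The approach is easily repaired: for $k\le\Delta$ use $\min\{k,d^{(i)}\}\ge kd^{(i)}/(k+d^{(i)})\ge kd^{(i)}/(2\Delta)$, giving
\[
\sum_{i>k}\min\{k,d^{(i)}\}\ \ge\ \frac{k}{2\Delta}(M-k\Delta)\ \ge\ \frac{3kM}{8\Delta},
\]
and then $k\Delta\le 3kM/(8\Delta)$ follows from $M\ge 4\Delta^2$. For comparison, the paper avoids Erd\H{o}s--Gallai and instead cites Koren's equivalent criterion: a non-graphical even-sum sequence admits disjoint nonempty $A,B\subset[n]$ with $\sum_{i\in A}d(i)-\sum_{j\in B}d(j)>|A|(n-1-|B|)$. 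One then reads off $|B|>n-1-\Delta$, hence $|A|\le\Delta$, so the left side is at most $2\Delta^2-M\to-\infty$ while the right side is nonnegative --- a contradiction from $\Delta^2=o(M)$ alone, with no case split.
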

		\begin{proof}
			The inequality \eqref{eqn:conditions} implies that $\Delta^2 = o(d(S))$, which implies that $\Delta^2 = o(M)$ (since $\gamma \leq 1$). \citet{Koren73} (Section 1) states that if a sequence $\d$ is not graphical, then there exist disjoint, non-empty sets $A, B \subset [n]$ such that 
			\begin{align*}
			\sum_{i \in A} d(i) - \sum_{j \in B} d(j) > a(n-1-b),
			\end{align*}
			where $a = |A|$ and $b = |B|$. Suppose that such sets $A$ and $B$ existed. The left hand side of this inequality is at most $a\Delta$. Thus, this inequality could only be true if $b > n-1-\Delta$. This implies that $a < \Delta + 1$, and also that $\sum_{j \in B}d(j) = \sum_{j \in [n]} d(j) - \sum_{j \notin B} d(j) \geq M(\d) - \Delta^2$. Since $\sum_{i \in A}d(i) \leq \Delta^2 = o(M)$, it follows that the left hand side tends to $-\infty$ as $n \to \infty$, which is a contradiction. Therefore the inequality cannot hold, and the sequence is graphical.
		\end{proof}

		In view of this lemma, by supposing that $\d$ is a sequence of length $n$ with all entries at least 1 and even sum that satisfies \labelcref{eqn:conditions}, we may assume that $\d$ is graphical, which is useful when talking about probabilities in associated random graph models.
		We next define a deterministic sequence $\d_H$ that in some sense represents a typical degree sequence of $G[S]$. Let $\d_S$ be the degree sequence of the graph $G[S]$. For an arbitrary sequence $\d$, let $n_k(\d)$ be the number of entries of $\d$ that are equal to $k$.
		
		\begin{definition}\label{def:dh} 
			Let $d$, a sequence of length $n$,
			and a set $S = \{i_1,\dots, i_s\} \subset [n]$ be given. To define $\d_H=\d_H(S)$, let
				  $Z_j \sim \Bin{j, \frac{d(S)}{M}}$ and define
				\begin{align*}
					N(k) = \left\lfloor \sum_{i \in S} \Prob{Z_{d(i)} \leq k} + \frac{1}{2} \right\rfloor
				\end{align*}
				for $k\geq 0$, and $N(-1) = 0$. Then
				define $\d_H$  to be the non-decreasing sequence in which $n_k(\d_H)$, i.e. the number of occurrences of $k$ in $\d_H$, is given by $n_k(\d_H) = N(k) - N(k-1)$.  
		\end{definition}
		
		We note that $\d_H$ may not be a graphical sequence, but we do not need it to be. The main result on the degree sequence of $G[S]$ is the following theorem, in which each of the sequences $\d_S$ and $\d_H$ is essentially segmented into two parts (with some overlap to ensure that all entries of both sequences are covered by the theorem). Part (a) of the theorem implies that, beyond a certain index, the corresponding entries in the two sequences are \aas\ asymptotic to each other. It also gives an explicit formula for these entries which would be suggested by a naive intuition based on expectation, and is useful for practical purposes. Below this index, it is difficult to obtain asymptotic values for each entry of $\d_S$, so we just give rough bounds in (b), and a distributional result in (c). The latter, applying for a slightly larger range than (b) in order to overlap with the range for which part (a) applies, states that the number entries that are equal to any given $k\le \frac{1}{2}\gamma\co$ is similar in each sequence. 
			
		\begin{theorem}\label{lem:dh-ds-comparison}
			Let $\d$ be a sequence of length $n$ with all entries at least 1 and even sum, and let $S \subset [n]$ be such that $(\d, S)$ satisfies \labelcref{eqn:conditions} for some $\delta \to 0$ and $\gamma < 1-c$ for some constant $c > 0$. The following claims hold with probability $1 - O(1/\sqrt{\log M})$:
			\begin{enumerate}[label=(\alph*)]
				\item    $d_S(k) = \gamma d(i_k)(1 \pm 8\delta^{1/64}) = d_H(k) \left( 1\pm 12\delta^{1/64} \right)$ for all $k$ such that $d(i_k) \geq \delta^{1/32}\co$;
				\item   $\max\{d_S(k), d_H(k)\} \leq 2\gamma\delta^{1/32}\co$  for all $k$ such that $d(i_k) < \delta^{1/32}\co$;
				\item   $|n_i(\d_S) - n_i(\d_H)| \leq \frac{\gamma n_{i}(\d_H)}{\co^{3}} + \gamma\co^5$ for all $i  \leq \frac{1}{2}\gamma \co$.
			\end{enumerate}
		\end{theorem}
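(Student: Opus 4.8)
\emph{Proof strategy.} For $i \in S$ let $X_i$ denote the degree of $i$ in $G[S]$, and set $N_S(v) = |\{i \in S : X_i \le v\}|$. Then $d_S(k) = \min\{v : N_S(v) \ge k\}$ and $n_i(\d_S) = N_S(i) - N_S(i-1)$, while by \Cref{def:dh} the sequence $\d_H$ satisfies $d_H(k) = \min\{v : N(v) \ge k\}$ and $n_i(\d_H) = N(i) - N(i-1)$, where $N(v) := \bigl\lfloor \sum_{i \in S} \Prob{Z_{d(i)} \le v} + \tfrac12 \bigr\rfloor$. Hence the theorem reduces to (i) comparing $N_S$ with $N$ on the range of $v$ needed for~(c), and (ii) estimating the extreme order statistics directly for~(a) and~(b). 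The governing heuristic is that each of the $d(S)$ half-edges incident to $S$ is paired with a half-edge incident to $S$ with probability roughly $\gamma = d(S)/M$, so $X_i$ should be close to $Z_{d(i)} \sim \Bin{d(i), \gamma}$ and the $X_i$ should be close to independent; establishing this inside $\Gnd$ itself is the core of the proof, and I would do it with the switching method.

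\emph{Step 1: the law of $(X_i)_{i\in S}$.} First, a routine switching argument shows that the number of edges of $G$ inside $S$, equivalently $\tfrac12 \sum_{i \in S} X_i$, concentrates around $\tfrac12 \gamma d(S)$ up to a relative error that is a small power of $\delta$; condition on its value $e(S)$. Then, for fixed $i \in S$, consider the switching that deletes an edge $\{i,x\}$ with $x \in \Sbar$ and an edge $\{y,z\}$ with $y,z \in S \setminus \{i\}$ and inserts $\{i,y\}$ and $\{x,z\}$; this preserves $\d$ and $e(S)$, raises $X_i$ by one, and lowers $X_z$ by one. Counting these switchings and their reverses, with the corrections for forbidden (multi-edge-creating) choices kept negligible by \eqref{eqn:conditions}, gives
\begin{align*}
\frac{\Prob{X_i = t+1 \mid e(S)}}{\Prob{X_i = t \mid e(S)}} = \frac{(d(i)-t)\bigl(d(S) - 2e(S)\bigr)}{(t+1)\cdot 2e(S)}\,\bigl(1 + o(1)\bigr),
\end{align*}
the right-hand side being, up to the error, the ratio of consecutive point probabilities of $\Bin{d(i),\hat\gamma}$ with $\hat\gamma = 1 - 2e(S)/d(S) = \gamma(1+o(1))$. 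Summing over $t$ yields $\Prob{X_i \le v} = \Prob{Z_{d(i)} \le v}(1 + o(1))$ for every relevant $v$, except where $\Prob{Z_{d(i)} \le v}$ is already super-polynomially small, in which case a crude large-deviation bound for $X_i$ suffices. (An alternative is to run the switching enumeration at the level of the crossing degree sequence $(d(i) - X_i)_{i \in [n]}$, which exhibits $(X_i)_{i\in S}$ directly as independent binomials conditioned on a single linear constraint.) This step is the main obstacle: since \eqref{eqn:conditions} pushes $\Delta$ only somewhat below $\sqrt{d(S)}$, the switching error factors sit near the edge of being $1 + o(1)$, so they must be tracked carefully and uniformly over the whole window of $t$ (not merely near the mean of $X_i$), while the conditioning on $e(S)$ — which is precisely what makes the $X_i$ \emph{almost} rather than genuinely independent binomials — has to be carried through cleanly.

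\emph{Step 2: concentration and part~(c).} By Step 1, $\Exp{N_S(v)} = N(v)(1+o(1))$. Since a single switching changes $N_S(v)$ by $O(1)$, a switching-based concentration inequality — or a second-moment estimate, using a further switching to show $\Cov{\ind{X_i \le v}, \ind{X_j \le v}}$ is negligible, the conditioning on $e(S)$ inducing only a weak negative dependence — gives $|N_S(v) - N(v)| \le \gamma N(v)/\co^3 + \gamma\co^5$ for each $v$; a union bound over the $O(\gamma\co)$ values $v \le \tfrac12 \gamma\co$ relevant to~(c) then yields all of this with failure probability $O(1/\sqrt{\log M})$. Part~(c) is immediate, since $n_i(\d_S) = N_S(i) - N_S(i-1)$ and $n_i(\d_H) = N(i) - N(i-1)$.

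\emph{Step 3: parts~(a) and~(b).} For~(a), $d(i_k) \ge \delta^{1/32}\co$ forces the mean $\gamma d(i_k) \ge \delta^{-1/32}\log M \to \infty$, so $Z_{d(i_k)}$ is concentrated near $\gamma d(i_k)$. A one-sided Chernoff bound for each $Z_{d(i_j)}$ (using $d(i_j) \ge \delta^{1/32}\co$ in the costly direction and $Z_{d(i_j)} \le d(i_j)$ or a large-deviation bound otherwise) shows $N(\gamma d(i_k)(1 - 4\delta^{1/64})) < k \le N(\gamma d(i_k)(1 + 4\delta^{1/64}))$, i.e.\ $d_H(k) = \gamma d(i_k)(1 \pm 4\delta^{1/64})$ deterministically. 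For the random sequence: if $d_S(k) > \gamma d(i_k)(1 + 8\delta^{1/64})$ then, since only $s - k$ indices exceed $k$, some $j \le k$ has $X_{i_j} > \gamma d(i_k)(1 + 8\delta^{1/64}) \ge \gamma d(i_j)(1 + 8\delta^{1/64})$; a union bound over all $j$ of the Chernoff/large-deviation bound for $X_{i_j}$ kills this event (the factor $8$ buys a Chernoff exponent of at least $20\log M$, easily summable over the at most $n \le M$ vertices), and the opposite inequality is symmetric, so $d_S(k) = \gamma d(i_k)(1 \pm 8\delta^{1/64})$ \aas; combining with the estimate for $d_H(k)$ gives $d_S(k) = d_H(k)(1 \pm 12\delta^{1/64})$. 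For~(b), set $v := 2\gamma\delta^{1/32}\co$ and take $k$ with $d(i_k) < \delta^{1/32}\co$; every $j$ with $d(i_j) < \delta^{1/32}\co$ has $\Exp{Z_{d(i_j)}} = \gamma d(i_j) < v/2$, so $\Prob{Z_{d(i_j)} > v}$ is super-polynomially small (Chernoff when $\gamma d(i_j)$ is not tiny, a large-deviation bound when it is), giving $N(v) \ge k$ and hence $d_H(k) \le v$; and $X_i \le d(i) \le v$ holds automatically unless $v < d(i) < \delta^{1/32}\co$, for which the same tail bounds make $\Prob{X_i > v}$ negligible, so \aas\ no $i \in S$ with $d(i) < \delta^{1/32}\co$ has $X_i > v$, whence $d_S(k) \le v$.
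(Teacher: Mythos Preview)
Your outline is the right one and matches the paper in broad strokes: a switching to show each $X_i$ is approximately $\Bin{d(i),\gamma}$, a second-moment computation for part~(c), and Chernoff-type tail bounds plus a deterministic check on $N(\cdot)$ for parts~(a) and~(b). Two points of difference and one genuine slip are worth flagging.

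First, the paper does \emph{not} condition on $e(S)$. Its basic switching (\Cref{lem:degree-switching}) simply swaps an edge $vy$ with $y\in S$ for an edge $ux$ with $u\in\Sbar$, replacing both by $uv$ and $xy$; this changes $e(S)$ but gives directly
\[
\frac{|A_v^{i}|}{|A_v^{i+1}|}=\frac{(i+1)\,d(\Sbar)}{(d(v)-i)\,d(S)}\Bigl(1+O\bigl(\tfrac{\Delta^2}{d(S)}\bigr)\Bigr),
\]
which is already the binomial ratio with parameter $\gamma$. Your $e(S)$-preserving switching also works, but it adds an extra concentration step for $e(S)$ and a more constrained switching; it buys nothing and your displayed ratio is in fact inverted --- with your switching one gets $\Prob{X_i=t+1}/\Prob{X_i=t}\approx \dfrac{(d(i)-t)\cdot 2e(S)}{(t+1)(d(S)-2e(S))}$, so $\hat\gamma=2e(S)/d(S)\sim\gamma$, not $1-2e(S)/d(S)$.

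Second, the place where your sketch is thinnest is exactly where the paper does the most work: the covariance bound underlying Step~2. The paper does not appeal to a generic ``switching-based concentration inequality''; it proves (\Cref{lem:binomial-degrees}) that for fixed $k$ and $v_1,\dots,v_k\in\Ssmall$ the joint law of $(d_S(v_1),\dots,d_S(v_k))$ factorises up to a $1+O\bigl(\bo(\Delta^2/d(S)+\bo^2 M/d(S)^2)\bigr)$ error, and this requires a separate three-edge switching (\Cref{lem:sd-adjacency-probability}) to bound $\CProb{v_1v_2\in E(G)}{d_S(v_1),\dots,d_S(v_k)}$, followed by a conditional version of the basic switching that keeps $d_S(v_2),\dots,d_S(v_k)$ fixed. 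The three-edge trick is needed precisely because a two-edge switching that preserves two induced degrees would require choosing an edge inside $S$, whose count fluctuates; your ``further switching'' would have to confront the same issue. Once the joint law is pinned down, Chebyshev on $Y_i=\sum_{v\in\Ssmall}\ind{d_S(v)=i}$ gives part~(c) exactly as you describe, and parts~(a) and~(b) go through essentially as in your Step~3.
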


		 We apply this and the results of \citet{Joos2018} to prove the following threshold for the existence of giant components in induced subgraphs of $\Gnd$. 

		\begin{theorem}\label{thm:main-gc-pretty}
			Let $\d$ be a sequence of length $n$ with all entries at least $1$ and even sum, and let $S$ be a subset of $[n]$. Let $\gamma = d(S)/M$, and suppose that $\Delta^2 \gamma^{-12}\log^{12} M = o(\gamma M)$.
			Then $G[S]$ \aas\ contains $(|S| - n_0(\d_H))(1 + o(1))$ non-isolated vertices. Furthermore, $G[S]$ \aas\ contains a component on a positive fraction of the non-isolated vertices if and only if there exists some constant $\varepsilon > 0$ such that $R(\d_H) \geq \varepsilon \gamma^2 M$.
		\end{theorem}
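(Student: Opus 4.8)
The plan is to reduce both assertions to statements about the random graph $\mathcal{G}(\d')$ for the \emph{realised} degree sequence $\d'=\d_S$ of $G[S]$, and then to transport those statements to the deterministic sequence $\d_H$ using \Cref{lem:dh-ds-comparison}. The crucial structural fact --- the analogue for $\mathcal{G}(\d)$ of the observation used by \citet{Fountoulakis07} in the configuration model --- is that $G[S]$ is uniformly distributed conditional on its degree sequence. Indeed, if $G$ is uniform on $\mathcal{G}(\d)$ and $H$ is a graph on $S$ in which vertex $i$ has degree $d'(i)$, then the number of graphs on $[n]$ with degree sequence $\d$ that induce $H$ on $S$ equals the number of ways to add a bipartite graph between $S$ and $\Sbar$, with $i\in S$ having degree $d(i)-d'(i)$, together with a graph on $\Sbar$ making up the remaining degrees; this count depends only on $\big(d(i)-d'(i)\big)_{i\in S}$ and on $\d$ restricted to $\Sbar$, hence is the same for all such $H$. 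So, conditional on $\d_S$, the graph $G[S]$ is a uniformly random graph with that degree sequence, and every isomorphism-invariant property of $G[S]$ may be read off from $\mathcal{G}(\d_S)$.

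The statement about non-isolated vertices is then essentially \Cref{lem:dh-ds-comparison}(c): $G[S]$ has exactly $|S|-n_0(\d_S)$ non-isolated vertices, and part (c) with $i=0$ gives $|n_0(\d_S)-n_0(\d_H)|\le \gamma n_0(\d_H)\co^{-3}+\gamma\co^5$ \aas. Since $d(i)\ge 1$ forces $(1-\gamma)^{d(i)}\le 1-\gamma$, \Cref{def:dh} gives $|S|-n_0(\d_H)\ge\tfrac{1}{2}\gamma|S|$, and a short computation from \labelcref{eqn:conditions} shows that $\gamma n_0(\d_H)\co^{-3}$ and $\gamma\co^5$ are both $o(\gamma|S|)$; hence $|S|-n_0(\d_S)=(1+o(1))(|S|-n_0(\d_H))$ \aas, as claimed.

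For the giant component I would apply the characterisation of \citet{Joos2018}: for a degree sequence $\d'$, $\mathcal{G}(\d')$ \aas\ has a component on a positive proportion of its non-isolated vertices if and only if $R(\d')\ge\varepsilon M(\d')$ for some constant $\varepsilon>0$, with $R$ the parameter defined there. Applying this with $\d'=\d_S$ and using the first paragraph, the task becomes to show that, \aas, the condition ``$R(\d_S)\ge\varepsilon M(\d_S)$ for some constant $\varepsilon>0$'' is equivalent to ``$R(\d_H)\ge\varepsilon'\gamma^2M$ for some constant $\varepsilon'>0$''. From \Cref{def:dh} one checks $M(\d_H)=\sum_{i\in S}\Exp{Z_{d(i)}}+O(\Delta)=\gamma d(S)+O(\Delta)=(1+o(1))\gamma^2M$, using $\Delta=o(\gamma^2M)$ from \labelcref{eqn:conditions}; and $M(\d_S)=2|E(G[S])|=(1+o(1))\gamma^2M$ \aas, since $\Exp{|E(G[S])|}=(1+o(1))\gamma^2M/2$ (because $\Prob{i\sim j}=(1+o(1))d(i)d(j)/M$ in $\mathcal{G}(\d)$ when $\Delta^2=o(M)$) and $|E(G[S])|$ concentrates, which follows from the switching estimates underlying the proof of \Cref{lem:dh-ds-comparison}. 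As both quantities are $(1+o(1))\gamma^2M$, it remains to prove the single estimate $R(\d_S)=R(\d_H)+o(\gamma^2M)$ \aas. For this I would split $R$ into its contributions from the degree classes of value at most $\tfrac{1}{2}\gamma\co$ and those of larger value, bound the first using the frequency estimate \Cref{lem:dh-ds-comparison}(c) (whose $\co^{-3}$-scaled main term is what makes the error negligible), and bound the second using the entrywise estimate \Cref{lem:dh-ds-comparison}(a), which applies to exactly those entries since the small-degree vertices of $S$ have induced degree at most $2\gamma\delta^{1/32}\co$ by part (b); the resulting errors are absorbed using $\co^{10}=o(M)$, a consequence of \labelcref{eqn:conditions}.

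The main obstacle is precisely this last step, $R(\d_S)=R(\d_H)+o(\gamma^2M)$. Because $R$ is in essence a truncated second-moment functional of the degree sequence, the only information available on the large entries --- the purely multiplicative bound $d_S(k)=d_H(k)\big(1\pm O(\delta^{1/64})\big)$ of part (a) --- would, without truncation, contribute an error of order $\delta^{1/64}\sum_k d_H(k)^2$, which is far too large in general; one must therefore verify that the truncation mechanism in the definition of $R$ in \cite{Joos2018} (the threshold beyond which high-degree vertices are treated specially, and the associated partial sums) does not shift appreciably when $\d_H$ is replaced by $\d_S$ --- a perturbation that controls the large entries only up to a multiplicative factor and the small ones only through frequencies with an additive slack. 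Establishing this stability, together with the routine bookkeeping forced by \labelcref{eqn:conditions}, is where essentially all the effort goes. Finally, \Cref{lem:dh-ds-comparison} requires $\gamma<1-c$; the complementary regime $\gamma\to 1$ is not covered and would be treated separately by a direct comparison of $G[S]$ with $G\in\mathcal{G}(\d)$, noting that $\d_S$ then arises from $\d$ by deleting vertices and reducing degrees of total weight $O\big((1-\gamma)M\big)=o(\gamma^2M)$, so that the verdict of \cite{Joos2018} for $\d$ transfers.
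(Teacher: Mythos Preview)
Your overall architecture matches the paper's: condition on $\d_S$, use uniformity of $G[S]$ given its degree sequence, invoke the Joos--Perarnau--Rautenbach--Reed criterion, and transport to $\d_H$ via \Cref{lem:dh-ds-comparison}. The paper packages the transport through a chain of lemmas showing $M(\d_S)\sim M(\d_H)\sim\gamma^2M$, $\MJ(\d_S)\sim\MJ(\d_H)=\Theta(M_H)$, and $R(\d_S)-R(\d_H)=o(M_H)$, exactly as you outline.

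Two points need tightening. First, the Joos et al.\ criterion is not ``$R(\d')\ge\varepsilon M(\d')$'' but ``$R(\d')\ge\varepsilon\,\MJ(\d')$'', where $\MJ(\d')=\sum_{d'(i)\ne 2}d'(i)$, together with the requirement that the sequence be \emph{well-behaved} ($\MJ\to\infty$). You therefore need $\MJ(\d_H)=\Theta(M_H)$, i.e.\ that $\d_H$ does not place almost all its mass on degree~$2$. This is not automatic and the paper proves it separately (their \Cref{lem:joos-m-concentration}) via the elementary inequality $\Prob{Z_{d(i)}=2}\le C\bigl(\Prob{Z_{d(i)}=1}+\Prob{Z_{d(i)}=3}\bigr)$ for a constant $C$ depending only on the bound $\gamma<1-c$. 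Without this step the reduction to ``$R(\d_H)\ge\varepsilon\gamma^2M$'' is incomplete.

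Second, your diagnosis of the obstacle in $R(\d_S)=R(\d_H)+o(\gamma^2M)$ is slightly off-target. Since $R(\d)=\sum_{i\ge j_\d}d(i)$ is a \emph{first}-moment tail, the multiplicative error from part~(a) contributes only $O(\delta^{1/64}M_H)=o(M_H)$, which is harmless; there is no $\sum d_H(k)^2$ term to fear here. The real issue is that the cutoff index $j_\d$ is defined through the second-moment partial sums $\sum_{i\le j}d(i)(d(i)-2)$, and one must show that $j_{\d_S}$ and $j_{\d_H}$ cannot differ by enough to change the tail sum by more than $o(M_H)$. The paper does this by first proving $\sum_{i\le k}\bigl(d_S(i)^2-d_H(i)^2\bigr)=o(M_H)$ \emph{uniformly in $k$} (note: $o(M_H)$, not $o(M_2)$), and then arguing that once the partial sum for one sequence has crossed zero, the other must cross within a window carrying only $o(M_H)$ of degree mass. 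Your sketch of splitting at $\tfrac12\gamma\co$ and using (a)--(c) is the right skeleton for that uniform estimate, but the stability of $j_\d$ is the step that actually needs to be written out.
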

	
		We prove this result by applying \Cref{lem:dh-ds-comparison} and the result of \citet{Joos2018} about the threshold for giant components in $\Gnd$ (formally stated in \Cref{thm:joos}). We show that for two sequences that are close in the sense described in \Cref{lem:dh-ds-comparison}, the thresholds for the existence of a giant component coincide. We defer the proof of \Cref{lem:dh-ds-comparison} for now, and give this and all the intermediate results in \Cref{sec:dist}.

	\subsection{Random induced subgraphs of $G$}
	
		Now we consider the (site-)percolated random graph model $G_\d(p)$, for some $p \in (0,1)$. In this model, $S$ is a random variable where, for each subset $T \in [n]$, $\Prob{S = T} = p^{|T|} (1 - p)^{n - |T|}$. 
		Thus, the subgraph $G[S]$ is the subgraph of a uniformly random $G\in \Gnd$ induced on $S$, where $S$ is chosen by randomly keeping vertices of $G$ independently with some probability $p$, and deleting the rest.
		Again we suppose that $\d$ is ordered in non-decreasing order with all entries at least $1$.  Analogously to the case where $S$ is fixed, we impose the condition that 
		\begin{align}\label{eqn:conditions-perc}
			\Delta^2 (p^{-1} \log M)^{12} \leq \delta p M
		\end{align}
		for some $\delta \to 0$ sufficiently slowly as $n\to\infty$. We now define a model degree sequence of the percolated random graph $G_\d(p)$.
	
		\begin{definition}\label{def:perc-seq}
			Let $\d = (d(1),\dots, d(n))$ be a non-decreasing sequence of length $n$. Let $p \in (0,1)$, and let $X_j \sim \Bin{j,p}$. For $k \in \left\{0, \dots, \co\right\}$, define 
			\begin{align*}
				\tilde{N}(k) &:= \left\lfloor p\sum_{i \in V} \Prob{X_{d(i)} \leq k} + \frac{1}{2} \right\rfloor
			\end{align*}
			for $k \geq 0$, and $\tilde{N}(-1) = 0$. Then define $n_k(\d_A) = \tilde N(k) - \tilde N(k-1)$ to be the number of entries in $\d_A$ with value $k$. 
		\end{definition}
		
		Now we state the main result of our paper for degree sequences of site-percolated $\Gnd$.
		
		\begin{theorem}\label{lem:degree-sequence-concentration-perc}
		Let $\d$ be a sequence of length $n$ with even sum and all entries at least 1, and let $p \in (0,1)$ be such that $p < 1 - \varepsilon$ for some constant $\varepsilon > 0$ and $\Delta^2 p^{-12}\log^{12} M \leq \delta p M$ for some $\delta \to 0$. Then the following statements hold with probability $1 - o(1)$ in the percolated random graph $G_\d(p)$.
			\begin{enumerate}[label=(\alph*)]
				\item $|S| = np\left( 1 \pm 3 \sqrt{\frac{\log n}{pn}} \right)$.
				\item $d(S) = pM\left(1 \pm \frac{p^2}{M^{1/4}} \right)$. 
				\item $d_S(v) = pd(v) \left( 1 \pm 9 \delta^{1/64}  \right)$ for all $v \in S$ such that $d(v) > 2\delta^{1/32} \cop$.
				\item For all $i \leq \frac{1}{3}p\cop$,
				\begin{align*}
					|n_i(\d_S) - n_i(\d_A)| \leq  \frac{pn_i(\d_A)}{\cop^{3}}(1 + o(1)) + \frac{p\cop^{6}}{\sqrt{\log M}}.
				\end{align*}
			\end{enumerate}
		\end{theorem}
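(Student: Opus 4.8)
The plan is to reduce everything to the fixed-set result \Cref{lem:dh-ds-comparison} by conditioning on $S$. Since $S$ is chosen independently of $G\in\Gnd$, conditionally on $\{S=T\}$ the graph $G[S]$ is distributed exactly as $G[T]$ for a uniformly random $G\in\Gnd$. Hence, for any set $T$ such that $(\d,T)$ satisfies \eqref{eqn:conditions} (with $\gamma$ replaced by $\gamma(T)=d(T)/M$, and $\delta$ by a comparable quantity still tending to $0$), \Cref{lem:dh-ds-comparison} applies directly to $\d_S=\d_T$ and $\d_H(T)$. So the argument has two parts: \emph{(i)} a concentration step showing that $S$ lies with probability $1-o(1)$ in a suitable ``good set'' $\mathcal{T}$ of subsets of $[n]$; and \emph{(ii)} a transfer step showing that, for $T\in\mathcal{T}$, $\d_H(T)$ is close enough to $\d_A$ that the conclusions of \Cref{lem:dh-ds-comparison} imply (c) and (d).

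Parts (a) and (b) are proved first, unconditionally. Since $|S|\sim\Bin{n,p}$, (a) is a Chernoff bound. Since $d(S)=\sum_{i\in[n]}d(i)\ind{i\in S}$ is a sum of independent variables bounded by $\Delta$ with mean $pM$ and variance at most $p\Delta M$, Bernstein's inequality gives (b) — indeed it gives the stronger bound $|d(S)-pM|\le\sqrt{p\Delta M}\log M$ with probability $1-o(1)$ — after checking the Bernstein hypotheses via $\Delta^2(p^{-1}\log M)^{12}\le\delta pM$. We take $\mathcal{T}$ to be the set of $T\subseteq[n]$ satisfying (a), the stronger form $|d(T)-pM|\le\sqrt{p\Delta M}\log M$ of (b), and, for each $0\le k\le\cop$, the ``typicality'' estimate that $\sum_{i\in T}\Prob{\Bin{d(i),p}=k}$ lies near its mean $p\sum_{i\in[n]}\Prob{\Bin{d(i),p}=k}$. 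The summands in the last sum lie in $[0,1]$ and, crucially, each point mass $\Prob{\Bin{d(i),p}=k}$ is $O(k^{-1/2})$, so the variance is of order $k^{-1/2}n_k(\d_A)$ rather than of order $n$, and Bernstein plus a union bound over the at most $\cop$ values of $k$ gives $\Prob{S\in\mathcal{T}}=1-o(1)$. A short calculation shows that for every $T\in\mathcal{T}$, $(\d,T)$ satisfies \eqref{eqn:conditions}, $\gamma(T)=p\bigl(1\pm o(\delta^{1/64})\bigr)$, and $\co(T):=\delta^{-1/16}\gamma(T)^{-1}\log M=\cop\bigl(1\pm o(1)\bigr)$.

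For the transfer step, fix $T\in\mathcal{T}$ and apply \Cref{lem:dh-ds-comparison}. Its part (a) gives $d_S(v)=\gamma(T)d(v)(1\pm8\delta^{1/64})$ whenever $d(v)\ge\delta^{1/32}\co(T)$; since $\gamma(T)=p(1\pm o(\delta^{1/64}))$ and the threshold $2\delta^{1/32}\cop$ exceeds $\delta^{1/32}\co(T)$, this yields (c). For (d), \Cref{def:dh} and \Cref{def:perc-seq} show that, up to $O(1)$ rounding terms, $n_k(\d_H(T))-n_k(\d_A)$ is the sum of a \emph{parameter-shift} discrepancy $\sum_{i\in T}\bigl(\Prob{\Bin{d(i),\gamma(T)}=k}-\Prob{\Bin{d(i),p}=k}\bigr)$ and a \emph{sampling} discrepancy $\sum_{i\in T}\Prob{\Bin{d(i),p}=k}-p\sum_{i\in[n]}\Prob{\Bin{d(i),p}=k}$. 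The latter is controlled by the typicality conditions defining $\mathcal{T}$. The former we write as $\int_p^{\gamma(T)}\frac{1}{q(1-q)}\sum_{i\in T}\Prob{\Bin{d(i),q}=k}\,(k-d(i)q)\,dq$ and bound using that $\Prob{\Bin{m,q}=k}\,|k-mq|$ is at most an absolute constant and vanishes at $mq=k$, so that it is $O\bigl(|\gamma(T)-p|\,p^{-1}\sqrt{k}\bigr)$ times $\sum_{i\in T}\Prob{\Bin{d(i),q}=k}$, which the strong concentration of $d(S)$ makes negligible compared with $\cop^{-3}p\,n_k(\d_A)$. Feeding these two bounds, together with \Cref{lem:dh-ds-comparison}(c) and the estimate $\gamma(T)\co(T)^5=o\bigl(p\cop^6/\sqrt{\log M}\bigr)$, into $|n_i(\d_S)-n_i(\d_A)|\le|n_i(\d_S)-n_i(\d_H(T))|+|n_i(\d_H(T))-n_i(\d_A)|$ gives (d); the two error terms in (d) are exactly what absorb the $\gamma\co^5$-type term of \Cref{lem:dh-ds-comparison}(c) together with the two transfer discrepancies.

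The main obstacle is the transfer step, and within it the need to bound the parameter-shift and sampling discrepancies \emph{uniformly over} $0\le k\le\cop$ by the error terms that (d) allows. The naive estimates — a variance of order $n$ for the sampling discrepancy, or a per-term bound $O(\Delta|\gamma(T)-p|)$ for the parameter shift — are far too weak: summed over $k$ they can be polynomial in $M$, whereas (d) permits only a polylogarithmic additive slack once $n_i(\d_A)$ is small. The resolution in both cases is to exploit the sharp concentration of the binomial point masses $\Prob{\Bin{m,p}=k}$ (they are $O(k^{-1/2})$ and decay superpolynomially away from $mp\approx k$) together with \eqref{eqn:conditions-perc}, so that the ``active'' weight at each level $k$ is tied to $n_k(\d_A)$ and is therefore absorbed. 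A secondary, purely bookkeeping, issue is propagating the multiplicative $1\pm o(\cdot)$ errors and the substitutions $\gamma(T)\leftrightarrow p$, $\co(T)\leftrightarrow\cop$ through the various fractional powers of $\delta$ without degrading the exponents $\tfrac{1}{64}$ and $\tfrac{1}{32}$ in (c).
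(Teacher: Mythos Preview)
Your overall architecture is exactly that of the paper: prove (a) and (b) by direct concentration, define a family of ``good'' sets $S$ on which $(\d,S)$ satisfies \eqref{eqn:conditions} with $\gamma(S)=p(1+o(\delta^{1/64}))$, and then on that event apply \Cref{lem:dh-ds-comparison} and transfer $\d_H(S)$ to $\d_A$ via the triangle inequality. Parts (a)--(c) match the paper almost verbatim; you use Bernstein where the paper uses Azuma for (b), which is harmless (and indeed gives you a slightly stronger intermediate bound on $d(S)$ that you later exploit).

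The genuine difference is in the transfer step for (d). The paper groups vertices by their \emph{degree} $j$: it shows by Chernoff that each count $|S_j|=|\{i\in S:d(i)=j\}|$ is concentrated near $pn_j(\d)$, writes $n_i(\d_H(S))=\sum_{j\le\cop}|S_j|\,\Prob{Z_j(S)=i}\pm2$, and then handles the parameter shift $\gamma(S)\to p$ by directly computing the ratio $\Prob{X_j=i}/\Prob{Z_j(S)=i}=1+O(p^2\cop/M^{1/4})$. You instead group by the \emph{target value} $k$: you concentrate the level sums $\sum_{i\in T}\Prob{\Bin{d(i),p}=k}$ using Bernstein with the variance bound coming from $\Prob{\Bin{m,p}=k}=O((k+1)^{-1/2})$, and you handle the parameter shift via the integral identity $\frac{d}{dq}\Prob{\Bin{m,q}=k}=\frac{k-mq}{q(1-q)}\Prob{\Bin{m,q}=k}$ together with the uniform bound $\Prob{\Bin{m,q}=k}\,|k-mq|=O(1)$. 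Both routes lead to the same error budget in (d); yours needs the sharper Bernstein-type bound on $d(S)$ to make the parameter-shift term small enough, while the paper gets by with the weaker bound in (b) because its ratio computation is slightly more direct. One small caveat: the $O(\sqrt{k})$ factor you quote for the parameter-shift integral really comes out as $O(\sqrt{k\log M})$ once you separate the ``active'' indices $d(i)q\approx k$ from the exponentially negligible tails, but the extra logarithmic factor is absorbed in the same way.
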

		Analogously to the definition of $\ytil{i}$, we also define
		\begin{align*}
			\tilde{w}_k := p\sum_{i \in V} \Prob{X_{d(i)} = k}.
		\end{align*}
		It follows immediately that $n_k(\d_A) = \tilde{w}_k \pm 1$. We also analogously define $\cop := \delta^{-1/16} p^{-1} \log M$. For $p=\gamma$ this is equivalent to the definition of $\co$ used previously. This also allows us to consider $\cop[\gamma(S)]$, the corresponding value of $\co$ for a given set $S \subset [n]$. This is useful as we often prove results for the percolation model by conditioning on a ``nice'' choice of $S$ and then applying results proved in the case where $S$ is fixed. As such, when proving results about the percolation model, we often write definitions from the previous section (e.g. $\gamma$, $Z_j$, $\d_H$, $\ytil{i}$) with the extra argument of $S$ to highlight the conditional probability space on which we define them.
		As in the case where $S$ is fixed, we apply \Cref{lem:degree-sequence-concentration-perc} to determine the threshold for the existence of a giant component in the site-percolated random graph under the conditions given in \labelcref{eqn:conditions-perc}. \
		
		\begin{theorem}\label{thm:perc-gc-nice}
			Let $\d$ be a sequence of length $n$ with even sum and all entries at least 1. Let $p \in (0,1)$ be such that $\Delta^2 p^{-12} \log^{12} M = o(p M)$. Let $G_\d(p)$ be the site-percolated random graph, where $G \sim \Gnd$.
			Then $G_\d(p)$ \aas\ contains $(np - n_0(\d_A))(1 + o(1))$ non-isolated vertices. Furthermore, $G_\d(p)$ \aas\ contains a component on a positive fraction of the non-isolated vertices if and only if $R(\d_A) \geq \varepsilon p^2M$ for some constant $\varepsilon > 0$. 
		\end{theorem}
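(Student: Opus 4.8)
The plan is to mirror the proof of \Cref{thm:main-gc-pretty}, working on the probability space obtained by first exposing the random set $S$ and using \Cref{lem:degree-sequence-concentration-perc} in place of \Cref{lem:dh-ds-comparison}. I would first expose $S$ and show that a.a.s.\ $S$ is \emph{good}, meaning that $|S| = np(1+o(1))$ and $d(S) = pM(1+o(1))$ (so $\gamma(S) := d(S)/M = p(1+o(1))$) and that $(\d, S)$ satisfies \eqref{eqn:conditions} with $\gamma(S)$ in place of $\gamma$ and some $\delta'(n) \to 0$; this follows from parts (a) and (b) of \Cref{lem:degree-sequence-concentration-perc} and the hypothesis $\Delta^2 p^{-12}\log^{12}M = o(pM)$. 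Since, conditional on $S$, the graph $G[S]$ is distributed as $\Gnd$ restricted to $S$, and an event holding a.a.s.\ for every good $S$ holds a.a.s.\ in $G_\d(p)$, it suffices to prove both conclusions conditional on an arbitrary good $S$; conditioning further on $\d_S$ makes $G[S]$ a uniformly random graph with degree sequence $\d_S$, to which \Cref{thm:joos} applies. We may assume $p \le 1 - c$ for a constant $c > 0$, the range in which \Cref{lem:degree-sequence-concentration-perc} is available: when $p = 1 - o(1)$ we have $d(\Sbar) = o(M)$ a.a.s.\ (since $\Exp{d(\Sbar)} = (1-p)M = o(M)$), so $G[S]$ is $G$ with a negligible-degree vertex set removed, and the conclusion follows by a direct argument, as for $\gamma \to 1$ in \Cref{thm:main-gc-pretty}.

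For the non-isolated count, $G[S]$ has $|S|$ vertices of which exactly $n_0(\d_S)$ are isolated. By part (a) of \Cref{lem:degree-sequence-concentration-perc} for $|S|$ and part (d) with $i = 0$ for $n_0(\d_S)$ (legitimate since $0 \le \tfrac13 p\cop$), a.a.s.\ the number of non-isolated vertices is $np - n_0(\d_A) + E$ with $|E| = o(np) + O(\sqrt{np\log n}) + O(p\cop^6/\sqrt{\log M})$. Using $1 - (1-p)^{d(i)} \ge \tfrac12\min(1, pd(i))$ and $d(i) \ge 1$ one obtains $np - n_0(\d_A) = \Omega(p^2 M/\max(1, p\Delta))$, and \eqref{eqn:conditions-perc} forces this quantity to dominate every term of $E$; hence a.a.s.\ the non-isolated count is $(np - n_0(\d_A))(1+o(1))$, as claimed.

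For the giant component, \Cref{thm:joos} applied to $G[S]$ (uniformly random with degree sequence $\d_S$) says it has a component on a positive fraction of its non-isolated vertices if and only if $R(\d_S)$ is at least a fixed positive constant times $M(\d_S)$, where a.a.s.\ $M(\d_S) = 2|E(G[S])| = p^2 M(1+o(1))$ (its mean is $p^2 M$ and it concentrates). Since $M(\d_A) = p^2 M(1+o(1))$ too, it remains to show that $R(\d_S) \ge \varepsilon_1 p^2 M$ for a constant $\varepsilon_1 > 0$ if and only if $R(\d_A) \ge \varepsilon_2 p^2 M$ for a constant $\varepsilon_2 > 0$ --- the percolation analogue of the ``the two thresholds coincide'' step in the proof of \Cref{thm:main-gc-pretty}, now for the pair $(\d_S, \d_A)$ and the closeness statement \Cref{lem:degree-sequence-concentration-perc}(c)--(d). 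One splits the sum defining $R$ into the contribution of vertices with $d(v) > 2\delta^{1/32}\cop$, where part (c) gives $d_S(v) = pd(v)(1 \pm 9\delta^{1/64})$ and the definition of $\d_A$ together with concentration of $\Bin{d(v), p}$ gives the same estimate for the matching entry of $\d_A$, so both contributions equal $(1 + O(\delta^{1/64}))\, p^2\sum_{v \text{ large}} d(v)^2 - O(p^2 M)$; and the contribution of the remaining entries, where part (d) matches $n_i(\d_S)$ with $n_i(\d_A)$ for all $i \le \tfrac13 p\cop$ up to an error that, by \eqref{eqn:conditions-perc}, contributes only $o(p^2 M)$ to $R$ (in particular the counts of degree-$1$ and degree-$2$ vertices agree closely enough to run the degree-$\le 2$ bookkeeping required by \Cref{thm:joos}, exactly as in \Cref{thm:main-gc-pretty}). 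The equivalence then follows from a short case split on whether $p^2\sum_{v \text{ large}} d(v)^2$ is $O(p^2 M)$ or $\omega(p^2 M)$. I expect this final comparison to be the main obstacle: the multiplicative slack $\delta^{1/64}$ in part (c), once squared and summed over the large-degree vertices, need not be $o(p^2 M)$ in absolute terms, so one cannot simply bound $|R(\d_S) - R(\d_A)|$ by $o(p^2M)$; it is the case split above, together with the special status of degree-$\le 2$ vertices in \Cref{thm:joos}, that makes the argument go through, and coaxing all the error terms to cooperate with \eqref{eqn:conditions-perc} is where the bulk of the work lies, while everything else is routine given \Cref{lem:degree-sequence-concentration-perc}.
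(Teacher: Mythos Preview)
Your high-level plan---condition on a good $S$, transfer the degree-sequence concentration of \Cref{lem:degree-sequence-concentration-perc} to the quantities that \Cref{thm:joos} cares about, then apply \Cref{thm:joos} and the law of total probability---is exactly what the paper does. The non-isolated-vertex count is handled essentially as you describe (the paper packages it as the percolation analogue of \Cref{lem:degree-0-vertices}).

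Where you diverge from the paper is in the $R$-comparison, and here you have talked yourself out of the straightforward argument. The paper simply proves the additive bound $|R(\d_S)-R(\d_A)|=o(M(\d_A))=o(p^2M)$ (\Cref{lem:rd-conc-good}, the percolation analogue of \Cref{lem:rd-concentration}), with no case split on the size of $\sum_{v\text{ large}} d(v)^2$. Your worry that ``the multiplicative slack $\delta^{1/64}$ \dots\ once squared and summed over the large-degree vertices need not be $o(p^2 M)$'' is unfounded for this particular application: by the very definition of $j_\d$ one has $\sum_{i<j_{\d_A}} d_A(i)\big(d_A(i)-2\big)\le 0$, hence $\sum_{i<j_{\d_A}} d_A(i)^2\le 2\sum_{i<j_{\d_A}} d_A(i)\le 2M(\d_A)$. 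So the only partial sum of squares that enters the comparison of $j_{\d_S}$ with $j_{\d_A}$ is already $O(p^2M)$, and a relative error of $O(\delta^{1/64})$ on it is $o(p^2M)$ in absolute terms. After that, $R$ itself is a partial sum of \emph{first} powers, bounded by $M(\d_S)\sim M(\d_A)$, so the same $O(\delta^{1/64})$ relative error on the large entries again contributes only $o(p^2M)$. This is precisely how \Cref{lem:rd-concentration} (and its percolation copy) goes through; your case split is unnecessary.

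Two smaller points. First, \Cref{thm:joos} is phrased in terms of $\MJ(\d)$, not $M(\d)$; you silently replace one by the other. The paper explicitly establishes $\MJ(\d_A)=\Theta(M(\d_A))$ and $\MJ(\d_S)-\MJ(\d_A)=o(M(\d_A))$ (\Cref{lem:mj-conc-perc}), which also gives the ``well-behaved'' hypothesis of \Cref{thm:joos}; you should state and prove this, not fold it into a parenthetical about degree-$\le 2$ bookkeeping. Second, in your error budget for the non-isolated count, the stray ``$o(np)$'' term is too weak to be absorbed by your lower bound when $p\to 0$; the actual error coming from \Cref{lem:degree-sequence-concentration-perc}(d) at $i=0$ is $O(p\,n_0(\d_A)/\cop^3)+O(p\cop^6/\sqrt{\log M})=o(p^2 n)+o(p^2M)$, which is what you need.
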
	
	
		Much like in the case where $S$ is fixed, we prove this by showing that $\d_S$ and $\d_A$ are \aas\ close, and then showing that for sequences that are close these thresholds coincide. We give the proof of \Cref{lem:degree-sequence-concentration-perc} in \Cref{sec:perc}.	
	
\section{Distribution of the induced vertex degree}\label{sec:dist}

	In this section we prove \Cref{lem:dh-ds-comparison}. Let $A_v^{i}$ denote the set of $G \in \Gnd$ such that $d_S(v) = i$.
	
	\begin{lemma} \label{lem:degree-switching}
		Let $v$ be an arbitrary vertex in $S$. Then 
		\begin{align*}
			\frac{|A_v^{i}|}{|A_v^{i+1}|} = \frac{i+1}{d(v)-i} \cdot \frac{d(\Sbar)}{d(S)} \left( 1 + O\left( \frac{\Delta^2}{d(S)} \right) \right).
		\end{align*}
	\end{lemma}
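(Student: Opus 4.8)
The proof is a standard application of the switching method: the plan is to estimate $|A_v^{i}|/|A_v^{i+1}|$ by double counting a single edge-swap that moves one edge at $v$ from $S$ into $\Sbar$. Fix $v\in S$; it suffices to treat $0\le i\le d(v)-1$, since for $i=d(v)$ the set $A_v^{i+1}$ is empty. Given $G\in A_v^{i+1}$, a \emph{forward switching} picks an edge $vw$ with $w\in S$ and an ordered edge $(x,u)$ of $G$ with $u\in\Sbar$, and replaces $\{vw,xu\}$ by $\{vu,xw\}$; it is \emph{valid} if the result is simple, which (given $w\in S$, $u\in\Sbar$) is equivalent to $x\notin\{v,w\}$, $vu\notin E(G)$ and $xw\notin E(G)$. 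A valid forward switching preserves $\d$ and makes $v$ lose the neighbour $w\in S$ and gain the neighbour $u\in\Sbar$, so it produces a graph in $A_v^{i}$. Dually, from $G'\in A_v^{i}$ a \emph{reverse switching} picks an edge $vu$ with $u\in\Sbar$ and an ordered edge $(x,w)$ with $w\in S$ and replaces $\{vu,xw\}$ by $\{vw,xu\}$, with validity $x\notin\{v,u\}$, $w\ne v$, $vw\notin E(G')$, $xu\notin E(G')$; it lands in $A_v^{i+1}$. Forward and reverse switchings are mutually inverse, so they give a bijection between pairs (graph in $A_v^{i+1}$, valid forward switching) and pairs (graph in $A_v^{i}$, valid reverse switching).

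Next I would count valid switchings from a fixed starting graph. For $G\in A_v^{i+1}$ there are $i+1$ choices of $vw$ and, before imposing validity, $\sum_{u\in\Sbar}d(u)=d(\Sbar)$ choices of $(x,u)$; the invalid choices (those with $x\in\{v,w\}$, or with $u$ a neighbour of $v$, or with $x$ a neighbour of $w$) number $O(\Delta^2)$ for each choice of $vw$, since each forbidden vertex has degree at most $\Delta$ and pins down at most $\Delta$ further choices. Hence $G$ admits $(i+1)\bigl(d(\Sbar)+O(\Delta^2)\bigr)$ valid forward switchings, uniformly in $G$. Symmetrically, each $G'\in A_v^{i}$ admits $(d(v)-i)\bigl(d(S)+O(\Delta^2)\bigr)$ valid reverse switchings, uniformly in $G'$.

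Equating the two counts summed over $A_v^{i+1}$ and over $A_v^{i}$, pulling the uniform error terms out of the sums, and using that \eqref{eqn:conditions} forces $\Delta^2=o(d(S))$ while $d(\Sbar)\ge cM\ge c\,d(S)$ (so $\Delta^2/d(\Sbar)=O(\Delta^2/d(S))$), rearranging gives
\[
  \frac{|A_v^{i}|}{|A_v^{i+1}|}\;=\;\frac{(i+1)\,d(\Sbar)}{(d(v)-i)\,d(S)}\left(1+O\!\left(\frac{\Delta^2}{d(S)}\right)\right),
\]
which is the claim. As a sanity check, this is exactly the ratio of consecutive probabilities of $\Bin{d(v),d(S)/M}$, consistent with the heuristic (and with the rest of the paper) that $d_S(v)$ is approximately binomial.

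The main obstacle — and the only place genuine care is needed — is the bookkeeping of invalid switchings: one must verify that every forbidden configuration (a coincidence among $v,w,x,u$, or a swap that would create a loop or a double edge) is overcounted by at most a factor $O(\Delta^2)$ against the $\Theta(d(\Sbar))$, respectively $\Theta(d(S))$, total number of switchings, \emph{uniformly} in the starting graph, so that these errors can be taken out of the double-counting identity. Everything else is routine arithmetic, and one should also note that the conditions \eqref{eqn:conditions} guarantee all denominators appearing above are nonzero.
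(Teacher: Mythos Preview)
Your proposal is correct and follows essentially the same approach as the paper: the same four-vertex edge swap (your $w,x,u$ correspond to the paper's $y,x,u$), the same double-counting of valid switchings, and the same $O(\Delta^2)$ bookkeeping for forbidden configurations, leading to identical counts $(i+1)(d(\Sbar)+O(\Delta^2))$ and $(d(v)-i)(d(S)+O(\Delta^2))$. The only cosmetic difference is that you frame the double count as a bijection between (graph, switching) pairs, whereas the paper equates the total number of switchings directly.
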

	\begin{proof}
		We define an operation called a switching that takes a graph $G \in A_v^{i+1}$ to some $G' \in A_v^i$. Let $G \in A_v^{i+1}$. To perform a switching, choose a vertex $y$ such that $vy \in E(G)$ and $y \in S$, as well as an ordered pair of vertices $(u,x)$ such that $ux \in E(G)$ and $u \in \Sbar$ ($x$ can be in either $S$ or $\Sbar$). It is also required that
\begin{enumerate}[label=(\alph*)]
			\item the vertices $\left\{u,v,x,y\right\}$ are distinct, and
			\item $xy \notin E(G)$ and $uv \notin E(G)$. 
\end{enumerate}
The switching deletes edges $vy$ and $ux$, replacing these edges with $uv$ and $xy$ and hence creating a new multigraph $G^\prime$, and the conditions  (a) and (b) imply    that $G' \in A_v^{i}$. 
		This switching is illustrated in \Cref{fig:switching-degrees}.
		\begin{figure}[h!]
			\begin{center}
				\begin{tikzpicture}[line width=.5pt,vertex/.style={circle,inner sep=0pt,minimum size=0.2cm}, scale=.6]
					

					\node [black, fill=black,label={[label distance=0mm]45:$y$}] (y) at (45:2)  [vertex]{}; 
					\node [draw, black, fill=white,label={[label distance=0mm]135:$v$}] (v) at (135:2)  [vertex]{}; 
					\node [black, fill=black,label={[label distance=0mm]225:$u$}] (u) at (225:2)  [vertex]{};
					\node [black, fill=black,label={[label distance=0mm]315:$x$}] (x) at (315:2)  [vertex]{}; 
					\node [label={[label distance=0mm]90:$G \in A^{i+1}_{v}$}] at (90:2.5) [vertex]{};

					
					\draw[thick, -] (y) to (v);
					\draw[thick, -] (x) to (u);
					\draw[thick, dashed] (u) to (v);
					\draw[thick, dashed] (y) to (x);
					
					\draw[thick, ->] (2.5,0) to (3.5,0);

				\end{tikzpicture}
				\begin{tikzpicture}[line width=.5pt,vertex/.style={circle,inner sep=0pt,minimum size=0.2cm}, scale=.6]


					\node [black, fill=black,label={[label distance=0mm]45:$y$}] (y) at (45:2)  [vertex]{}; 
					\node [draw, black, fill=white,label={[label distance=0mm]135:$v$}] (v) at (135:2)  [vertex]{};
					\node [black, fill=black,label={[label distance=0mm]225:$u$}] (u) at (225:2)  [vertex]{};
					\node [black, fill=black,label={[label distance=0mm]315:$x$}] (x) at (315:2)  [vertex]{}; 
					\node [label={[label distance=0mm]90:$G' \in A^{i}_{v}$}] at (90:2.5) [vertex]{};

					
					\draw[thick, dashed] (y) to (v);
					\draw[thick, dashed] (x) to (u);
					\draw[thick, -] (u) to (v);
					\draw[thick, -] (y) to (x);

				\end{tikzpicture}
				
			\end{center}
			\caption{A switching. Here $v,y \in S$ and $u \in \Sbar$. Edges present in $G$ (on the left, respectively $G'$ on the right) are given as solid lines, forbidden edges are given as dashed. Other edges may be present or absent.}
			\label{fig:switching-degrees}
		\end{figure}
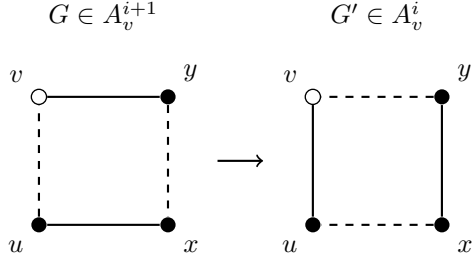	
		
		Now we find upper and lower bounds on the number of switchings that create a particular $G' \in A_v^{i}$. 
		Given  $G \in A_v^{i+1}$, there are $i+1$ choices for a vertex $y$ such that $vy \in E(G)$ and $y \in S$. There are $d(\Sbar)$ choices for a vertex $u \in \Sbar$ and neighbour $x$. Thus, there are at most $(i+1)d(\Sbar)$ switchings that take $G \in A_{v}^{i+1}$ to some $G^\prime \in A_{v}^i$. To determine a corresponding lower bound, we note that since $G$ has maximum degree at most $\Delta$, the number of choices for $\{u,x,y\}$ as described above that violate (a) is $O((i+1)\Delta)$ and for (b) it is $O((i+1)\Delta^2)$. Hence the number of valid switchings that can be applied to each $G \in A_v^{i+1}$ is $(i+1)(d(\Sbar) +O(\Delta^2))$.
		
		Now we use a very similar argument to count the switchings that create a particular $G' \in A_v^{i}$.  There are $d(v)-i$ choices for the vertex $u$, and $d(S)$ choices for an ordered pair of vertices $(x,y)$ such that $y \in S$ and $xy \in E(G')$. So an upper bound is $(d(v)-i)d(S)$. The number of these combinations that are invalid, due to a vertex being repeated or the edge $ux$ or $vy$ being present,  is  $O(\Delta^2)(d(v)-i)$. It follows that the number of switchings that create $G'$ is $(d(v)-i)(d(S)+O(\Delta^2))$.
			
		From the conclusions of the previous two paragraphs, the total number of switchings applicable to graphs in $A_v^{i+1}$ can be counted in two different ways as $|A_v^{i+1}| (i+1)(d(\Sbar) +O(\Delta^2))$ and $|A_v^{i}|(d(v)-i)(d(S)+O(\Delta^2))$. The lemma follows, since $d(\Sbar) = \Theta(M)$ and $\Delta^2 = o(d(S))$.
	\end{proof}

	Define $\Ssmall = \{i_1,\dots, i_\ell\}$ where $\ell$ is the smallest index such that $d(i_j) > \co$ for all $j > \ell$. That is, $\Ssmall$ is the set of vertices in $S$ with degree (in $G$) at most $\co$. Naturally, we can also define $\Sbig = S \backslash \Ssmall$. Define 
	\begin{align*}
	\ytil{i} = \sum_{v \in \Ssmall} \Prob{Z_{d(v)} = i}.
	\end{align*}
	For small $i$ (that is, smaller than $c\gamma \co$ for some constant $c < 1$), $\ytil{1}$ is very close to $n_i(\d_H)$, the number of entries in the sequence $\d_H$ with value $i$. One noteworthy and straightforward consequence of \Cref{lem:dh-ds-comparison} is that $n_i(\d_H) = \ytil{i} \pm (1 + o(M^{-5}))$ for all $i \leq \frac{1}{2}\gamma \co$. For the sequence $\d_S$, we can analogously define
	\begin{align}\label{def:Yi}
	Y_i = \sum_{v \in \Ssmall} \ind{d_S(v) = i}. 
	\end{align}	
	These definitions allow us to consider the behaviour of vertices in $\Ssmall$ and $\Sbig$ somewhat separately. This is useful in certain circumstances, particularly when studying the distribution of the number of vertices with very low degrees (e.g. 0, 1, or 2) in $\d_H$ and $\d_S$, as the number of vertices in $\Sbig$ with very low induced degree is \aas\ $0$. More specifically, \Cref{lem:dh-ds-comparison} implies that \aas\ $n_i(\d_S) = Y_i$ for all $i \leq \frac{1}{2}\gamma\co$. A later result (\Cref{lem:binomial-degrees}) then implies that $\Exp{Y_i} = \ytil{i} \left( 1 + o(\co^{-6})\right)$ for all $i \leq \co$. 
	\begin{remark}\label{rem:yi-ratio}
		Let $S_j$ be the set of $i \in S$ such that $d(i) = j$. 
		Then for $k \in \left[1, \frac{1}{2}\gamma \co\right]$, 
		\begin{align*}
		\ytil{k} &= \sum_{i \in \Ssmall} \Prob{Z_{d(i)} = k} =
		\sum_{j\leq \co} |S_j| \Prob{Z_j = k}  = \sum_{j \leq \co} \frac{d(S)}{d(\Sbar)} \frac{j-k+1}{k} |S_j| \Prob{Z_j = k-1} .
		\end{align*}
		With some naive bounds on the value of $\frac{j-k+1}{k}$, this gives useful bounds on the ratios between successive values of $\ytil{i}$, and thus on $n_i(\d_H)$. Since $j \leq \co$, 
		\begin{align*}
		\ytil{k} &= \sum_{j\leq \co} |S_j| \Prob{Z_j = k} \leq \frac{\co}{k} \frac{d(S)}{d(\Sbar)} \sum_{j \leq \co} |S_j| \Prob{Z_j = k-1} = \frac{\gamma \co}{k(1-\gamma)}\ytil{k-1}.
		\end{align*}
		Thus, $\ytil{k} = O(\gamma\co\ytil{k-1})$. More commonly, we use the form $\ytil{k-1} = \Omega\left( \frac{\ytil{k}}{\gamma\co} \right)$.   
	\end{remark}

\subsection{Concentration of large degrees}

	Recall that $d_S(v)$ is the degree of vertex $v$ in $G[S]$.  Also recall the definition of $\gamma = d(S)/M$, and that $\delta$ is an arbitrary function such that $\delta \to 0$ and $\Delta^2 (\gamma^{-1} \log M)^{12} \leq \delta d(S)$.

	\begin{lemma} \label{lem:epsilon}
		Suppose $\varepsilon = 4\delta^{1/64}$ and define $i_0= i_0(v) = \gamma d(v)$ (not necessarily an integer). Then, for $n$ sufficiently large, 
		\begin{align*}
			\prob\big(d_S(v) \in [i_0(1 - 2\varepsilon), i_0(1 + 2\varepsilon)]\big) < 2d(v) \exp\left( -\varepsilon^2 i_0/2 \right).
		\end{align*}
	\end{lemma}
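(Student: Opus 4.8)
\emph{Proof plan.} The assertion is a two-sided concentration bound: $d_S(v)$ lies in the window $[\,i_0(1-2\varepsilon),\,i_0(1+2\varepsilon)\,]$ around $i_0=\gamma d(v)$ except with probability at most $2d(v)\exp(-\varepsilon^2 i_0/2)$ (and there is nothing to prove when this quantity is at least $1$, e.g.\ for bounded $i_0$). The plan is to view $d_S(v)$ as a mild perturbation of a $\Bin{d(v),\gamma}$ variable and then invoke the Chernoff bound. Write $p_i=\Prob{d_S(v)=i}=|A_v^i|/|\Gnd|$. By \Cref{lem:degree-switching}, for every $0\le i<d(v)$,
\begin{align*}
	\frac{p_i}{p_{i+1}} = \frac{i+1}{d(v)-i}\cdot\frac{d(\Sbar)}{d(S)}\left(1+O\left(\frac{\Delta^2}{d(S)}\right)\right) = \frac{i+1}{d(v)-i}\cdot\frac{1-\gamma}{\gamma}\,(1+\eta_i),
\end{align*}
where $d(\Sbar)/d(S)=\gamma^{-1}-1$ and $|\eta_i|\le\eta:=C\Delta^2/d(S)$ for an absolute constant $C$, uniformly in $i$. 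The point is that $\frac{i+1}{d(v)-i}\cdot\frac{1-q}{q}$ is exactly the ratio of consecutive point probabilities of $\Bin{d(v),q}$. Condition \labelcref{eqn:conditions} is far stronger than needed here: it gives $\Delta^2/d(S)\le\delta/\log^{12}M=o(\delta^{1/64})=o(\varepsilon)$, so $\eta=o(\varepsilon)$.

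The next step is to trap $d_S(v)$ between two binomials in the stochastic order. Set $q^{+}=\gamma/(1-(1-\gamma)\eta)$ and $q^{-}=\gamma/(1+(1-\gamma)\eta)$, so that $\frac{1-q^{+}}{q^{+}}=\frac{1-\gamma}{\gamma}(1-\eta)$ and $\frac{1-q^{-}}{q^{-}}=\frac{1-\gamma}{\gamma}(1+\eta)$; since $\eta=o(\varepsilon)$ and $0<c\le 1-\gamma$, both $q^{\pm}=\gamma(1\pm(1-\gamma)\eta+O(\eta^2))=\gamma(1\pm o(\varepsilon))$ lie in $(0,1)$ for $n$ large. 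The ratio identity above, with $|\eta_i|\le\eta$, then yields
\begin{align*}
	\frac{i+1}{d(v)-i}\cdot\frac{1-q^{+}}{q^{+}} \;\le\; \frac{p_i}{p_{i+1}} \;\le\; \frac{i+1}{d(v)-i}\cdot\frac{1-q^{-}}{q^{-}} \qquad\text{for all } 0\le i<d(v),
\end{align*}
and the switching construction of \Cref{lem:degree-switching} also certifies $A_v^i\neq\emptyset$ for all $0\le i\le d(v)$ (a valid switching carries any graph of $A_v^{i+1}$ into $A_v^{i}$, and conversely), so every ratio here is well-defined. The left inequality says $p_i/\Prob{\Bin{d(v),q^{+}}=i}$ is non-increasing in $i$, i.e.\ $d_S(v)\preceq\Bin{d(v),q^{+}}$ in the stochastic order; symmetrically the right inequality gives $d_S(v)\succeq\Bin{d(v),q^{-}}$. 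I expect this transfer from a pointwise ratio estimate to genuine stochastic domination to be the delicate point, and the reason is instructive: one must \emph{not} bound $p_i$ for $i$ in the tail by telescoping the ratios back to the mode, since that product ranges over $\Theta(\varepsilon i_0)$ indices and $(1+\eta)^{\Theta(\varepsilon i_0)}$ need not be $1+o(1)$ when $\Delta$ is close to the maximum permitted by \labelcref{eqn:conditions}; routing through the likelihood-ratio order uses each ratio bound exactly once and avoids any such accumulation.

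Finally I would apply the Chernoff bound to the two binomials. Since $\Bin{d(v),q^{+}}$ has mean $d(v)q^{+}=i_0(1+o(\varepsilon))$, for $n$ large we have $i_0(1+2\varepsilon)\ge(1+\varepsilon)\,d(v)q^{+}$, and the upper-tail Chernoff bound gives $\Prob{\Bin{d(v),q^{+}}\ge i_0(1+2\varepsilon)}\le\exp(-\varepsilon^2 i_0/2)$, with ample slack in the constant (the genuine exponent is $(2-o(1))\varepsilon^2 i_0$). Likewise $\Bin{d(v),q^{-}}$ has mean $i_0(1-o(\varepsilon))$ and the lower-tail Chernoff bound gives $\Prob{\Bin{d(v),q^{-}}\le i_0(1-2\varepsilon)}\le\exp(-\varepsilon^2 i_0/2)$. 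Using the two dominations,
\begin{align*}
	\Prob{d_S(v)\notin[\,i_0(1-2\varepsilon),\,i_0(1+2\varepsilon)\,]} \;\le\; 2\exp\left(-\varepsilon^2 i_0/2\right) \;\le\; 2d(v)\exp\left(-\varepsilon^2 i_0/2\right),
\end{align*}
which is the claim.
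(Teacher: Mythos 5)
Your proposal is correct, and after the shared first step it takes a genuinely different route from the paper. Both arguments rest on \Cref{lem:degree-switching} with the normalisation $d(\Sbar)/d(S)=(1-\gamma)/\gamma$, and both bound the probability that $d_S(v)$ falls \emph{outside} the window (the ``$\in$'' in the statement is evidently a typo for ``$\notin$'', as the application in \Cref{cor:high-degree-crossover} confirms). The paper absorbs the $1+O(\Delta^2/d(S))$ factor into the $(1-\varepsilon)$ term to get a clean ratio bound $|A_v^i|/|A_v^{i+1}|<1-\tfrac34\varepsilon$ for $i\le i_1-1$, telescopes this over the $\approx\varepsilon i_0$ indices between $i_2$ and $i_1$ to bound each point probability by $\exp(-\varepsilon^2 i_0/2)$, and then takes a union bound over at most $d(v)$ values --- which is exactly where the factor $d(v)$ in the statement comes from --- with a symmetric argument for the upper tail. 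You instead upgrade the uniform ratio estimate to a likelihood-ratio sandwich $\Bin{d(v),q^-}\preceq d_S(v)\preceq\Bin{d(v),q^+}$ with $q^\pm=\gamma(1\pm o(\varepsilon))$ and finish with binomial Chernoff bounds; this dispenses with the union bound (the $d(v)$ factor becomes pure slack) and in fact yields the stronger exponent $\approx 2\varepsilon^2 i_0$. The price is that your route needs the ratio formula, and positivity of every $|A_v^i|$ for $0\le i\le d(v)$, uniformly in $i$ (which the switching construction does supply, as you note), plus the standard fact that the likelihood-ratio order implies the usual stochastic order --- in a final write-up you should spell out or cite that implication, and phrase the Chernoff step at the actual threshold $i_0(1\pm 2\varepsilon)$ (deviation $\approx 2\varepsilon$ from the mean $i_0(1\pm o(\varepsilon))$) rather than at $(1\pm\varepsilon)d(v)q^{\pm}$, since the latter with the paper's bound \eqref{eqn:chernoff} only gives exponent $\varepsilon^2 i_0/3$. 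One small correction to your commentary: the accumulation problem you attribute to ``telescoping'' does not affect the paper's argument, because the $O(\Delta^2/d(S))$ error is absorbed into a clean geometric factor \emph{before} telescoping; it only rules out the cruder idea of comparing directly to the binomial at parameter $\gamma$ with a multiplicative error per step.
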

	
	\begin{proof} 
		We first prove that the probability that $d_S(v) < i_0(1 - 2\varepsilon)$ is less than $d(v)\exp\left( -\frac{1}{2} \varepsilon^2 i_0 \right)$. 
		Define $i_k = (1 - k\varepsilon)i_0$ for all $k > 0$. Recall that $|A_v^{i}|$ is the number of graphs in $\Gnd$ such that $d_S(v) = i$. For all $i \leq i_1 - 1$, \Cref{lem:degree-switching} implies that
		\begin{align*}
			\frac{|A_v^{i}|}{|A_v^{i+1}|} &= \frac{i+1}{d(v)-i}\frac{d(\Sbar)}{d(S)}\left( 1 + O\left( \frac{\Delta^2}{d(S)} \right) \right)
			\leq \left( 1 - \varepsilon \right)\frac{i_0}{d(v) - i_0} \frac{d(\Sbar)}{d(S)}\left( 1 + O\left( \frac{\Delta^2}{d(S)} \right) \right).
		\end{align*}
		By definition of $i_0$, 
		\begin{align*}
			\frac{i_0}{d(v) - i_0} = \frac{\frac{d(S)}{M}}{1 - \frac{d(S)}{M}} = \frac{d(S)}{d(\Sbar)}.
		\end{align*}
		Thus, for all $i \leq i_1 - 1$, 
		\begin{align}\label{eqn:epsilon}
			\frac{|A_v^{i}|}{|A_v^{i+1}|} \leq (1-\varepsilon) \left( 1 + O\left( \frac{\Delta^2}{d(S)} \right) \right) < 1 - \frac{3}{4}\varepsilon,
		\end{align}
		for $n$ sufficiently large, since $\Delta^2 = o(\varepsilon d(S))$. Hence for all $i \leq \lceil i_2 \rceil - 1$,  
		\begin{align*}
			\frac{|A_v^{i}|}{|A_v^{\lfloor i_1\rfloor}|} &\leq \left(1 - \frac{3}{4}\varepsilon\right)^{\varepsilon i_0-1}
			< \exp\left( - \frac{2}{3}\varepsilon^2 i_0 + O\left( \varepsilon^3 i_0 \right) \right)
			< \exp\left( -\frac{1}{2}\varepsilon^2 i_0 \right),
		\end{align*}
		where the last inequality holds for $n$ sufficiently large. So if $i \leq \lceil i_2 \rceil -1$, it follows that $\Prob{ d_S(v) = i} <  \exp\left( -\frac{1}{2}\varepsilon^2 i_0 \right)$. Performing a union bound over all possible induced degrees $ i \leq i_2$ gives that 
		\begin{align*}
			\Prob{ d_S(v) \leq i_2} <  d(v)\exp\left( -\frac{1}{2}\varepsilon^2 i_0 \right).
		\end{align*}

		The argument for the upper bound is obtained symmetrically mutatis mutandis, and the lemma follows from the union bound.
	\end{proof}

	\begin{lemma}\label{cor:high-degree-percolation}\label{cor:high-degree-crossover}
		As in \Cref{lem:epsilon}, suppose $\varepsilon = 4\delta^{1/64}$. The probability that
		\begin{align*}
			d_S(v) \in \left[ \gamma d(v)(1 - 2\varepsilon), \gamma d(v)(1 + 2\varepsilon) \right]
		\end{align*}
		for all vertices $v \in S$ such that $d(v) > \delta^{-1/32}\gamma^{-1} \log M$
		is $1 - o\left( M^{-5} \right)$.
	\end{lemma}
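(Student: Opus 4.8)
The plan is to deduce the simultaneous statement from the single-vertex estimate of \Cref{lem:epsilon} by a union bound over the relevant vertices, calibrating the constant in the exponent so that the bound survives the loss incurred by summing over $v$. Keep $\varepsilon = 4\delta^{1/64}$, so that $\varepsilon^2 = 16\delta^{1/32}$. For any $v \in S$, \Cref{lem:epsilon} (read as bounding the probability that $d_S(v)$ lies \emph{outside} the interval, which is exactly what its proof establishes by combining the two tail estimates) gives
\[
	\Prob{d_S(v) \notin \big[\gamma d(v)(1-2\varepsilon),\ \gamma d(v)(1+2\varepsilon)\big]} < 2d(v)\exp\!\left(-\tfrac{1}{2}\varepsilon^2 \gamma d(v)\right).
\]
Now if $d(v) > \delta^{-1/32}\gamma^{-1}\log M$, then $\delta^{1/32}\gamma d(v) > \log M$, so $\tfrac12\varepsilon^2\gamma d(v) = 8\delta^{1/32}\gamma d(v) > 8\log M$, and hence the right-hand side above is at most $2d(v)M^{-8}$.

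It then remains to union bound over all such $v$. Summing the per-vertex bounds,
\[
	\sum_{v \in S:\, d(v) > \delta^{-1/32}\gamma^{-1}\log M} 2d(v)M^{-8} \leq 2M^{-8}\sum_{v\in S}d(v) = 2M^{-8}d(S) \leq 2M^{-7} = o(M^{-5}),
\]
since $d(S) = \gamma M \leq M$. Taking complements yields the claim, the ``for $n$ sufficiently large'' qualifier of \Cref{lem:epsilon} being absorbed into the asymptotic conclusion.

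There is no genuine obstacle here beyond the bookkeeping of constants: the exponent $\varepsilon^2 = 16\delta^{1/32}$ is calibrated precisely so that $\tfrac12\varepsilon^2\gamma d(v)$ exceeds $8\log M$ at the degree threshold $\delta^{-1/32}\gamma^{-1}\log M$, which makes the per-vertex failure probability $O(M^{-8})$ — small enough to absorb the degree-weighted factor $d(S)\le M$ picked up by the union bound and still leave error $o(M^{-5})$. If a later argument needed a stronger bound, pushing the threshold up by a constant factor in the exponent would deliver any fixed polynomial improvement by the same computation.
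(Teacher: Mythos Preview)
Your proof is correct and follows essentially the same approach as the paper: apply \Cref{lem:epsilon} to each vertex with $d(v) > \delta^{-1/32}\gamma^{-1}\log M$, use $\varepsilon^2 = 16\delta^{1/32}$ to get a per-vertex failure probability of $O(d(v)M^{-8})$, and take a union bound. The only cosmetic difference is that the paper bounds $d(v)\le n$ and then multiplies by $n$ to obtain $2n^2 M^{-8}\le 2M^{-6}$, whereas you sum $d(v)$ directly to get $2d(S)M^{-8}\le 2M^{-7}$; both are equally valid.
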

	\begin{proof}
		We apply \Cref{lem:epsilon} along with the union bound over all vertices $v \in S$ such that $d(v) > \delta^{-1/32}\gamma^{-1} \log M$. \Cref{lem:epsilon} implies that the probability that $d_S(v)$ is outside the specified range is at most $2n\exp\left( -\frac{1}{2}\varepsilon^2 i_0 \right)$. Note that by assumption, $i_0 > \delta^{-1/32}\log M$, where $\delta \to 0$. Combining this with the union bound implies that the probability that there exists some vertex $v$ with degree greater than $\delta^{-1/32}\gamma^{-1} \log M$ such that $d_S(v)$ is outside its specified range is at most $2n^2\exp\left( -\frac{1}{2}\varepsilon^2 \delta^{-1/32}\log M \right) = 2n^2 M^{-8}$. Since $n \leq M$, the claim holds. 
	\end{proof}

\subsection{Distribution   of small vertex degrees}

	Define $\bo = \min\{\co, \Delta\}$. Using this notation simplifies some arguments by allowing us to combine cases where $\co \leq \Delta$ and $\co > \Delta$. 	
	In the following lemma, we use a slightly more complicated switching than in \Cref{lem:degree-switching}, moving three edges instead of the usual two. The reason for this is that we wish to preserve the degrees of two adjacent vertices, $v_1$ and $v_2$, in $G[S]$, while switching away the edge between them. To use the previous switching, this means that the two other adjacent vertices in the switching must be in $S$, and adjacent. Possible variations in the number of choices of such a pair of adjacent vertices would cause a problem. Instead, we use a trick with its origins in the switchings introduced by McKay and Wormald~\cite{MWlow}, whereby a third pair of vertices are involved in order to make the number of switchings much more stable.  

	\begin{lemma}\label{lem:sd-adjacency-probability}
		Suppose $(\d, S)$ satisfies condition \labelcref{eqn:conditions}. Let $k$ be fixed, let $\{v_1, \dots, v_k\} \subset \Ssmall$, and let $G$ be a uniformly random graph with degree sequence $\d$. Then 
		\begin{align*}
			\CProb{v_1v_2 \in E(G)}{d_S(v_1) = i_1, \dots, d_S(v_k) = i_k} = O\left( \frac{\bo^2M}{d(S)^2} \right)
		\end{align*}
		for all $i_j \leq d(v_j)$ for $j \leq k$, and $\Prob{v_1v_2 \in E(G)} = O\left( \frac{\bo^2M}{d(S)^2} \right)$. 
	\end{lemma}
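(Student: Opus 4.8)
The plan is to prove the conditional estimate by a switching argument comparing, among the graphs realising the conditioning, the number containing the edge $v_1v_2$ with the number not containing it, and then to deduce the unconditional estimate by averaging over $i_1,i_2$. Fix admissible values $i_1,\dots,i_k$ (so $i_j\le d(v_j)$), let $\mathcal B\subseteq\Gnd$ be the set of graphs with $d_S(v_j)=i_j$ for every $j\le k$, and split $\mathcal B$ into $\mathcal B^{+}=\{G\in\mathcal B:v_1v_2\in E(G)\}$ and $\mathcal B^{-}=\mathcal B\setminus\mathcal B^{+}$. The probability in the statement equals $|\mathcal B^{+}|/|\mathcal B|$, which is $0$ when $\mathcal B^{+}=\emptyset$ and at most $|\mathcal B^{+}|/|\mathcal B^{-}|$ otherwise, so it suffices to bound $|\mathcal B^{+}|/|\mathcal B^{-}|$.

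The switching sends $G\in\mathcal B^{+}$ to some $G'\in\mathcal B^{-}$. From $G$, choose ordered pairs $(a_1,b_1)$ and $(a_2,b_2)$ with $a_1,a_2\in S$ and $a_1b_1,a_2b_2\in E(G)$, such that the six vertices $v_1,v_2,a_1,a_2,b_1,b_2$ are distinct, none of $a_1,a_2,b_1,b_2$ equals any of $v_3,\dots,v_k$, and $v_1a_1,v_2a_2,b_1b_2\notin E(G)$; then delete $v_1v_2,a_1b_1,a_2b_2$ and add $v_1a_1,v_2a_2,b_1b_2$. This produces a simple graph with degree sequence $\d$, it destroys the edge $v_1v_2$, and it preserves $d_S(v_j)$ for every $j$ (at $v_1$ and $v_2$ the $S$-neighbour $v_2$, resp.\ $v_1$, is exchanged for the $S$-neighbour $a_1$, resp.\ $a_2$, and none of the altered edges is incident to $v_3,\dots,v_k$), so $G'\in\mathcal B^{-}$. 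The auxiliary pair $(b_1,b_2)$ joined by the new edge $b_1b_2$ is the McKay--Wormald device~\cite{MWlow} alluded to before the lemma: it makes the forward count independent of $G$, because the number of ordered pairs $(a,b)$ with $a\in S$ and $ab\in E(G)$ is $\sum_{a\in S}d_G(a)=d(S)$ for every $G$, so there are exactly $d(S)^2$ candidate tuples before the distinctness, forbidden-edge and $\{v_3,\dots,v_k\}$-avoidance conditions are imposed.

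Each of those conditions discards only $O\big(\Delta^2 d(S)+k\co d(S)\big)$ tuples: e.g.\ $v_1a_1\notin E(G)$ removes the tuples with $a_1\in N_G(v_1)$ (at most $d(v_1)\le\Delta$ choices) and $b_1\in N_G(a_1)$ (at most $\Delta$ choices), with $d(S)$ choices left for $(a_2,b_2)$; a coincidence such as $a_1=a_2$ is confined to $\sum_{a\in S}d_G(a)^2\le\Delta d(S)$ tuples; and $a_1=v_j$ for some $j\ge3$ forces $b_1\in N_G(v_j)$ (at most $d(v_j)\le\co$ choices, as $v_j\in\Ssmall$), with $d(S)$ for the other pair and at most $k$ indices $j$. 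Since $k$ is constant and \labelcref{eqn:conditions} gives $\Delta^2/d(S)\le\delta$ and $\co/d(S)\le\delta^{15/16}$, each $G\in\mathcal B^{+}$ admits $d(S)^2(1-o(1))\ge\frac12 d(S)^2$ valid switchings. For the reverse direction only an upper bound is needed: a switching producing a given $G'\in\mathcal B^{-}$ is determined by the tuple $(a_1,b_1,a_2,b_2)$, and in $G'$ the vertex $a_1$ is a neighbour of $v_1$ lying in $S$ (at most $d_S(v_1)=i_1$ choices), $a_2$ is a neighbour of $v_2$ in $S$ (at most $i_2$ choices), and $(b_1,b_2)$ is an ordered edge of $G'$ ($2|E(G')|=M$ choices), so at most $i_1i_2M$ switchings produce each $G'$.

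Double counting the switchings between $\mathcal B^{+}$ and $\mathcal B^{-}$ now gives $\frac12 d(S)^2|\mathcal B^{+}|\le i_1i_2M|\mathcal B^{-}|$, hence $|\mathcal B^{+}|/|\mathcal B^{-}|=O\big(i_1i_2M/d(S)^2\big)$. Since $v_1,v_2\in\Ssmall$ we have $i_1\le d(v_1)\le\min\{\co,\Delta\}=\bo$ and likewise $i_2\le\bo$, so $\CProb{v_1v_2\in E(G)}{d_S(v_1)=i_1,\dots,d_S(v_k)=i_k}=O\big(\bo^2M/d(S)^2\big)$. For the unconditional bound, expand $\Prob{v_1v_2\in E(G)}=\sum_{i_1\le d(v_1)}\sum_{i_2\le d(v_2)}\Prob{d_S(v_1)=i_1,\,d_S(v_2)=i_2}\,\CProb{v_1v_2\in E(G)}{d_S(v_1)=i_1,\,d_S(v_2)=i_2}$, apply the case $k=2$ of the conditional bound to each factor, and use that the probabilities sum to $1$. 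The main obstacle is precisely to choose the switching so that the forward count is graph-independent: a plain two-edge switching would force the auxiliary edge to lie inside $S$, and the number of such edges fluctuates over $\mathcal B^{+}$; inserting the third pair removes this dependence, at the cost of the longer verification above that every conditioned induced degree is preserved and that every degeneracy term is $o(d(S)^2)$ under \labelcref{eqn:conditions}.
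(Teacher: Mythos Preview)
Your proposal is correct and follows essentially the same approach as the paper: the three-edge switching you define is, up to relabelling (your $(a_1,b_1),(a_2,b_2)$ are the paper's $(y,x),(b,a)$), identical to the paper's, and the forward count $d(S)^2(1-o(1))$ and reverse upper bound $i_1i_2M$ are obtained in the same way. The only cosmetic differences are that the paper permits the coincidence $a_1=a_2$ (your requirement of six distinct vertices is slightly stricter but harmless for the lower bound) and phrases condition (c) as ``induced degrees unchanged'' rather than your explicit avoidance of $\{v_3,\dots,v_k\}$, but these lead to the same $O(d(S)\Delta^2+k\bo\,d(S))$ count of excluded tuples and hence the same conclusion.
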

	\begin{proof}
		First note that if one of $i_1$ or $i_2$ is equal to $0$, then the probability in question is 0. Thus we may suppose that $i_1,~i_2 > 0$. 
		
		Let $A_{v_1, v_2}$ be the subset of $\Gnd$ consisting of the graphs where $v_1$ and $v_2$ are adjacent and each vertex $v_j$ has induced degree $i_j$ for $j \leq k$. Similarly, let $B_{v_1, v_2}$ be the subset of $\Gnd$ where $v_1$ and $v_2$ are not adjacent and each vertex $v_j$ has induced degree $i_j$ for $j \leq k$. 
		We define a switching between $A_{v_1, v_2}$ and $B_{v_1, v_2}$ as follows. Suppose $G \in A_{v_1, v_2}$. To perform a switching, choose two ordered pairs of vertices in $V(G)$, $(x,y)$ and $(a,b)$, such that $a b,\ x y \in E(G)$, and $y,\ b \in S$, and with the additional requirements that
	\begin{enumerate}[label=(\alph*)]
			\item the vertices $\left\{ v_1,v_2, a,b,x,y \right\}$ are distinct, with the exception that $y = b$ is permissible,  
			\item $v_1y,\ ax,\ v_2b \notin E(G)$, and
			\item  the degrees of $v_1,\dots, v_k$ in $G[S]$ are unchanged by the switching.
		\end{enumerate}

The switching deletes the edges $v_1v_2,\ xy,\ ab$ and replaces them with $v_1y,\ ax,\ v_2b$, creating a graph  $G' \in B_{v_1, v_2}$. 
A diagram illustrating this switching is given in \Cref{fig:switching-sd-adjacency}.  

		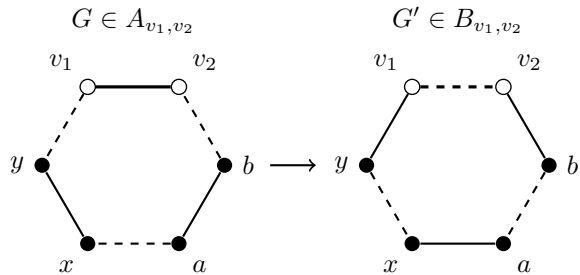
\begin{figure}[h!]
			\begin{center}
				\begin{tikzpicture}[line width=.5pt,vertex/.style={circle,inner sep=0pt,minimum size=0.2cm}, scale=.6]
					

					\node [black, fill=black,label={[label distance=0mm]0:$b$}] (y) at (0:2)  [vertex]{}; 
					\node [draw, black, fill=white,label={[label distance=0mm]60:$v_2$}] (v) at (60:2)  [vertex]{}; 
					\node [draw, black, fill=white,label={[label distance=0mm]120:$v_1$}] (u) at (120:2)  [vertex]{};
					\node [black, fill=black,label={[label distance=0mm]180:$y$}] (x) at (180:2)  [vertex]{}; 
					\node [black, fill=black,label={[label distance=0mm]240:$x$}] (a) at (240:2) [vertex]{};  
					\node [black, fill=black,label={[label distance=0mm]300:$a$}] (b) at (300:2)  [vertex]{}; 
					\node [label={[label distance=0mm]90:$G \in A_{v_1,v_2}$}] at (90:2.5) [vertex]{};
					
					
					\draw[very thick, black,-] (u) to (v);
					\draw[thick, -] (x) to (a);
					\draw[thick, -] (y) to (b);
					\draw[thick,dashed] (u) to (x);
					\draw[thick, dashed] (b) to (a);
					\draw[thick, dashed] (y) to (v);
					
					\draw[thick, ->] (3,0) to (4,0);

				\end{tikzpicture}
				\begin{tikzpicture}[line width=.5pt,vertex/.style={circle,inner sep=0pt,minimum size=0.2cm}, scale=.6]
					
					
					\node [black, fill=black,label={[label distance=0mm]0:$b$}] (y) at (0:2)  [vertex]{}; 
					\node [draw, black, fill=white,label={[label distance=0mm]60:$v_2$}] (v) at (60:2)  [vertex]{}; 
					\node [draw, black, fill=white,label={[label distance=0mm]120:$v_1$}] (u) at (120:2)  [vertex]{};
					\node [black, fill=black,label={[label distance=0mm]180:$y$}] (x) at (180:2)  [vertex]{}; 
					\node [black, fill=black,label={[label distance=0mm]240:$x$}] (a) at (240:2) [vertex]{};  
					\node [black, fill=black,label={[label distance=0mm]300:$a$}] (b) at (300:2)  [vertex]{}; 
					\node [label={[label distance=0mm]90:$G' \in B_{v_1,v_2}$}] at (90:2.5) [vertex]{};

					
					\draw[very thick, black, dashed] (u) to (v);
					\draw[thick,-] (u) to (x);
					\draw[thick, -] (b) to (a);
					\draw[thick, -] (y) to (v);
					\draw[thick, dashed] (x) to (a);
					\draw[thick, dashed] (y) to (b);

				\end{tikzpicture}
				
			\end{center} 
			\caption{A switching. Present edges are given as solid lines, forbidden edges are given as dashed. Other edges can be present or absent.}
			\label{fig:switching-sd-adjacency}
		\end{figure}
		First we determine a lower bound on the number of switchings that can be applied to each $G \in A_{v_1, v_2}$. 
		Since $y$ and $b$ are in $S$, there are $d(S)^2$ choices for $\{a,b,x,y\}$ ignoring the constraints (a) -- (c).
		Noting that the induced degrees of $v_1, \dots, v_k$ are unchanged by the switching if $b,y\in S$ and $\{v_1, \dots, v_k, a,b,x,y\}$ are distinct, we see that the number of choices for $\{v_1, v_2, a,b,x,y\}$ that violate (a) or (c) is $O(d(S) k \bo)$, and the number of choices that violate (b) is $O(d(S)\Delta^2)$. Since $k$ is fixed, this implies that the number of switchings that can be applied to each $G \in A_{v_1, v_2}$ is $d(S)(d(S) - O(\Delta^2))$. 
		
		Now we determine an upper bound on the number of switchings that create a particular $G' \in B_{v_1, v_2}$. 
		The definition of $B_{v_1, v_2}$ implies that the number of choices for $y$ is $i_1$, and similarly the number of choices for $b$ is $i_2$. The number of choices for the adjacent pair $(a,x)$ is at most $M$. Thus, the number of switchings that create $G'$ is at most $i_1i_2M$. 

		Combining these two bounds gives
		\begin{align*}
			\frac{|A_{v_1, v_2}|}{|B_{v_1, v_2}|} &\leq \frac{i_1 i_2M}{d(S)^2 } \left( 1 + O\left( \frac{\Delta^2}{d(S)} \right) \right).
		\end{align*}
		Since $\Delta^2 = o(d(S))$ by assumption, the multiplicative error term is $1 + o(1)$. Thus, the probability that the vertices $v_1$ and $v_2$ are adjacent, conditional on the induced degrees of $\{v_1, \dots, v_k\}$, is at most
		\begin{align*}
			\CProb{v_1 v_2 \in E(G)}{d_S(v_1) = i_1, \dots, d_S(v_k) = i_k} &= \frac{|A_{v_1, v_2}|}{|A_{v_1, v_2}| + |B_{v_1, v_2}|}
			\leq \frac{\bo^2 M}{d(S)^2} (1 + o(1)),
		\end{align*}
		since both $i_1$ and $i_2$ are at most $d(v_1)$ and $d(v_2)$ respectively and $v_1, v_2 \in \Ssmall$. This proves the first claim, and the second claim follows immediately from the law of total probability.
	\end{proof}

	\begin{lemma}\label{lem:binomial-degrees} \label{lem:low-degree-adjacency}
		Suppose $(\d,S)$ satisfies condition \labelcref{eqn:conditions}. Let $k$ be fixed and $\{v_1, \dots, v_k\} \subset \Ssmall$.
		Let $Z_j \sim \Bin{j, \frac{d(S)}{M}}$. Then, uniformly for all $i_1,\ldots, i_k\le \bo$, we have
		\begin{align*}
			\Prob{d_S(v_1) = i_1, \dots, d_S(v_k) = i_k} &= \prod_{j=1}^k \Prob{d_S(v_j) = i_j} \left( 1 + O\left( \bo \left(\frac{\Delta^2}{d(S)} + \frac{\bo^2 M}{d(S)^2} \right) \right) \right) \\
			&= \prod_{j=1}^k \Prob{Z_{d(v_j)} = i_j} \left( 1 + O\left( \bo \left(\frac{\Delta^2}{d(S)} + \frac{\bo^2 M}{d(S)^2} \right) \right) \right).
		\end{align*}
	\end{lemma}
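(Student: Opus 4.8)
The plan is to prove the two displayed identities of the lemma in two stages: first the marginal estimate
\[\Prob{d_S(v) = i} = \Prob{Z_{d(v)} = i}\bigl(1 + O(\bo\Delta^2/d(S))\bigr),\]
uniformly over $v \in \Ssmall$ and $0 \le i \le \bo$, and then the near-independence of the $k$ events. Since combining the marginal estimate over $v_1, \dots, v_k$ (boundedly many factors) reduces the second displayed line of the lemma to the first, it suffices to prove the marginal estimate and the first line. For the marginal, note that $v \in \Ssmall$ forces $d(v) \le \co$ and hence $d(v) \le \bo$, so $d_S(v)$ is supported on $\{0, \dots, d(v)\} \subseteq \{0, \dots, \bo\}$ and $\Prob{d_S(v) = i} = \frac{|A_v^i|}{\sum_{i'=0}^{d(v)} |A_v^{i'}|}$. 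The ratio $|A_v^i|/|A_v^{i+1}|$ supplied by \Cref{lem:degree-switching} is, up to the factor $1 + O(\Delta^2/d(S))$, exactly the ratio $\Prob{Z_{d(v)} = i}/\Prob{Z_{d(v)} = i+1}$ of consecutive binomial weights with success probability $d(S)/M = d(S)/(d(S)+d(\Sbar))$. Telescoping this over the at most $\bo$ values $i = 0, \dots, d(v)-1$ — the accumulated multiplicative error being $1 + O(\bo\Delta^2/d(S)) = 1 + o(1)$ by \labelcref{eqn:conditions} — and renormalising gives the marginal estimate; all $O$-constants are absolute, hence the uniformity.

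For the joint statement I would induct on $k$, the base case $k = 1$ being the marginal estimate. In the inductive step, write
\[\Prob{d_S(v_1) = i_1, \dots, d_S(v_k) = i_k} = \CProb{d_S(v_1) = i_1}{d_S(v_j) = i_j,\ 2 \le j \le k}\cdot\Prob{d_S(v_j) = i_j,\ 2 \le j \le k},\]
apply the inductive hypothesis to the last factor, and show that the conditional law of $d_S(v_1)$ agrees with its unconditional law up to a factor $1 + O\bigl(\bo(\Delta^2/d(S) + \bo^2 M/d(S)^2)\bigr)$. Writing $\hat A_{v_1}^i$ for the set of $G$ with $d_S(v_1) = i$ and $d_S(v_j) = i_j$ for $2 \le j \le k$, this amounts to estimating $|\hat A_{v_1}^i|/|\hat A_{v_1}^{i+1}|$ by rerunning the switching of \Cref{lem:degree-switching} inside this smaller class: use the same operation (delete $v_1 y$ and $ux$ with $y \in S$, $u \in \Sbar$; insert $uv_1$ and $xy$), but additionally require $x, y \notin \{v_2, \dots, v_k\}$, which together with the existing distinctness and non-edge conditions guarantees that the induced degrees of $v_2, \dots, v_k$ are unchanged. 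Almost every term in the resulting forward and backward switching counts matches \Cref{lem:degree-switching} up to the usual $O(\Delta^2)$ and (as $k$ is fixed) $O(\bo)$ corrections.

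The one genuinely new contribution, and the step I expect to be the main obstacle, is the family of forward switchings in which $y$ is itself one of $v_2, \dots, v_k$: such a switching requires $v_1 v_j \in E(G)$ for that $j$, and a graph admitting one admits up to $d(\Sbar)$ of them, so they cannot be discarded uniformly over $\hat A_{v_1}^{i+1}$ when $i$ is small. The remedy is to bound them on average rather than per graph: by \Cref{lem:sd-adjacency-probability} and a union bound over the boundedly many indices $j$, at most $O(\bo^2 M/d(S)^2)\,|\hat A_{v_1}^{i+1}|$ graphs of $\hat A_{v_1}^{i+1}$ have $v_1$ adjacent to some $v_j$, whence the total number of these switchings is at most $O(\bo^2 M/d(S)^2)\,d(\Sbar)\,|\hat A_{v_1}^{i+1}|$ — a relative error of $O(\bo^2 M/d(S)^2)$ against the main forward count $(i+1)(d(\Sbar) + O(\Delta^2))\,|\hat A_{v_1}^{i+1}|$. (The analogous backward switchings with $y = v_j$ are harmless, as there $x$ is confined to the $O(\bo)$ neighbours of $v_j$ and they cost only an $O(\bo^2)$ correction per target graph.) This is precisely why \Cref{lem:sd-adjacency-probability}, with its three-edge switching tailored to fixing several induced degrees at once, is established beforehand. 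Telescoping the per-step error $O(\Delta^2/d(S) + \bo^2 M/d(S)^2)$ over the at most $\bo$ relevant values of $i$, renormalising over $\{0, \dots, d(v_1)\} \subseteq \{0, \dots, \bo\}$, and comparing with the marginal estimate yields the conditional estimate; substituting it into the displayed factorisation and collapsing the finitely many $(1 + O(\cdot))$ factors into one completes both lines of the lemma. The remainder is routine telescoping and bookkeeping, all valid because \labelcref{eqn:conditions} makes both $\bo\Delta^2/d(S)$ and $\bo^3 M/d(S)^2$ equal to $o(1)$.
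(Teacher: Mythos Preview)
Your proposal is correct and follows essentially the same approach as the paper: the paper also reruns the switching of \Cref{lem:degree-switching} inside the class $C_i = \hat A_{v_1}^i$ fixing $d_S(v_2),\dots,d_S(v_k)$, identifies the forward switchings with $y\in\{v_2,\dots,v_k\}$ as the only new obstruction, and disposes of them on average via \Cref{lem:sd-adjacency-probability}, then telescopes and renormalises exactly as you describe. The only cosmetic difference is that the paper does not phrase the argument as an explicit induction on $k$ but instead proves the conditional and unconditional one-vertex estimates directly and remarks that these suffice; your inductive framing is equivalent.
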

	
	\begin{proof}
		We condition on the event that $d_S(v_j) = i_j$ for all $j \geq 2$, for an arbitrary choice of $(i_2, \dots, i_k)$ where $i_j \leq d(v_j)$. It suffices to show that 
		\begin{align*}
			\CProb{d_S(v_1) = i_1}{d_S(v_2) = i_2, \dots, d_S(v_k) = i_k} = \Prob{Z_{d(v_1)} = i_1} \left( 1 + O\left( \bo \left(\frac{\Delta^2}{d(S)} + \frac{\bo^2 M}{d(S)^2} \right) \right) \right),
		\end{align*}
		as well as 
		\begin{align*}
			\Prob{d_S(v_1) = i_1} = \Prob{Z_{d(v_1)} = i_1} \left( 1 + O\left( \bo \left(\frac{\Delta^2}{d(S)} + \frac{\bo^2 M}{d(S)^2} \right) \right) \right).
		\end{align*} 
Let $C_{i}$ be the set of graphs in $\Gnd$ such that $d_S(v_1) = i$ and $d_S(v_j) = i_j$ for all $j \geq 2$. That is, for all $G \in C_i$, $(d_S(v_1), d_S(v_2),  \dots, d_S(v_k)) = (i, i_2, \dots, i_k)$. We apply a switching similar to the one used in \Cref{lem:degree-switching} to switch between $C_{i+1}$ and $C_{i}$. This switching is illustrated in \Cref{fig:switching-sd-pairs-2}. The important difference between this switching and the switching used in the proof of \Cref{lem:degree-switching} is that the induced degrees of vertices $v_2, \dots, v_k$ are maintained. Other than this extra restriction, the edges are chosen in the same way as the switching used in \Cref{lem:degree-switching}. 

		\begin{figure}[h!]
			\begin{center}
				\begin{tikzpicture}[line width=.5pt,vertex/.style={circle,inner sep=0pt,minimum size=0.2cm}, scale=.6]
					

					\node [black, fill=black,label={[label distance=0mm]45:$y$}] (y) at (45:2)  [vertex]{}; 
					\node [draw, black, fill=white,label={[label distance=0mm]135:$v_1$}] (v) at (135:2)  [vertex]{}; 
					\node [black, fill=black,label={[label distance=0mm]225:$u$}] (u) at (225:2)  [vertex]{};
					\node [black, fill=black,label={[label distance=0mm]315:$x$}] (x) at (315:2)  [vertex]{}; 
					\node [label={[label distance=0mm]90:$G \in C_{i+1}$}] at (90:2.5) [vertex]{};

					
					\draw[thick, -] (y) to (v);
					\draw[thick, -] (x) to (u);
					\draw[thick, dashed] (u) to (v);
					\draw[thick, dashed] (y) to (x);
					
					\draw[thick, ->] (2.5,0) to (3.5,0);

				\end{tikzpicture}
				\begin{tikzpicture}[line width=.5pt,vertex/.style={circle,inner sep=0pt,minimum size=0.2cm}, scale=.6]


					\node [black, fill=black,label={[label distance=0mm]45:$y$}] (y) at (45:2)  [vertex]{}; 
					\node [draw, black, fill=white,label={[label distance=0mm]135:$v_1$}] (v) at (135:2)  [vertex]{};
					\node [black, fill=black,label={[label distance=0mm]225:$u$}] (u) at (225:2)  [vertex]{};
					\node [black, fill=black,label={[label distance=0mm]315:$x$}] (x) at (315:2)  [vertex]{}; 
					\node [label={[label distance=0mm]90:$G' \in C_{i}$}] at (90:2.5) [vertex]{};

					
					\draw[thick, dashed] (y) to (v);
					\draw[thick, dashed] (x) to (u);
					\draw[thick, -] (u) to (v);
					\draw[thick, -] (y) to (x);

				\end{tikzpicture}
				
			\end{center}
			\caption{The switching used in this proof. Here $v_1, y \in S$ and $u \in \Sbar$. Importantly, the switching does not alter $d_S(v_2), \dots, d_S(v_k)$.}
			\label{fig:switching-sd-pairs-2}
		\end{figure}
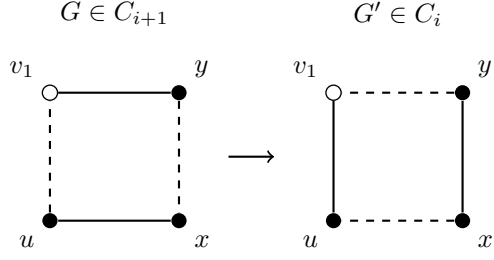
		Now we define the switching formally. Suppose $G \in C_{i+1}$. Choose a vertex $y$ such that $v_1 y \in E(G)$ and $y \in S$, as well as an ordered pair of vertices $(u,x)$ such that $u x \in E(G)$ and $u \in \Sbar$, with the extra restrictions that
		\begin{enumerate}[label=(\alph*)]
					\item the vertices $\left\{u,v_1,x,y\right\}$ are distinct,
					\item $xy \notin E(G)$ and $uv_1 \notin E(G)$,
					\item  the degrees of $v_1,\dots, v_k$ in $G[S]$ are unchanged by the switching. 
		\end{enumerate}
		The switching deletes edges $v_1y$ and $ux$, replacing these edges with $uv_1$ and $xy$ and hence creating a new graph $G^\prime \in C_i$.

		To count the switchings that can be applied to each $G \in C_{i+1}$, we carry out a computation analogous to that in the proof of \Cref{lem:degree-switching}. For each $G \in C_{i+1}$, there are $(i+1) d(\Sbar)$ choices for $\{u,v_1,x,y\}$ such that $v_1 y, ux \in E(G)$, $y \in S$ and $u \notin S$. To estimate the number of switchings, we bound from above the expected number of choices for $\{u,v_1, x, y\}$ such that (a), (b) or 
		\begin{enumerate} 
 		 	\item[(c$'$)] $\{v_2, \dots, v_k\} \cap \{u, v_1, x, y\} = \emptyset$
		\end{enumerate}
is false.
		Note that (c$'$) is   slightly stricter than (c). As before, there are at most $3(i+1)\Delta^2$ choices for $\{u,v_1,x,y\}$ that do not satisfy (a) or (b). Now we consider (c). 
		By assumption, $v_1 \neq v_j$ for $j \geq 2$, and $u \notin S$. Thus, the only possibilities for a non-empty intersection are if $x \in \{v_2, \dots, v_k\}$ or $y \in \{v_2, \dots, v_k\}$. 
		In the former case, there are at most $k\Delta$ choices for a neighbour $u$ of $x$. With at most $i+1$ choices for $y$ adjacent to $v_1$, this means that there are at most  $(i+1)k\Delta$ choices for $\{u,v_1,x,y\}$. 
		In the second case, there are $O(1)$ choices for $y \in \{v_2, \dots, v_k\}$, and for each one \Cref{lem:sd-adjacency-probability} gives
		\begin{align*}
			\CProb{v_1y \in E(G)}{d_S(v_1) = i+1, \dots, d_S(v_k ) = i_k} &= O\left( \frac{\bo^2M}{d(S)^2} \right).
		\end{align*}	
		In such cases, there are $d(\Sbar)$ choices for $ux$.
		Thus, the expected number of choices for $\{u, v_1, x, y\}$ where $y \in \{v_2, \dots, v_k\}$, for a random $G \in C_{i+1}$, is $d(\Sbar)O\left( \frac{\bo^2M}{d(S)^2} \right)$.
		Therefore, the average number of valid switchings that can be applied to each $G \in C_{i+1}$ is 
		\begin{align*}
			(i+1) (d(\Sbar) - O(\Delta^2) ) - d(\Sbar)O\left( \frac{\bo^2M}{d(S)^2} \right) = (i+1) d(\Sbar) \left(1 - O\left( \frac{\Delta^2}{M} + \frac{\bo^2 M}{(i+1)d(S)^2} \right) \right),
		\end{align*}
		since $d(\Sbar) = \Theta(M)$. 

		Next we determine upper and lower bounds for the number of switchings that create a particular $G' \in C_{i}$. 
		There are $d(v_1) - i$ choices for the vertex $u \notin S$ such that $v_1 u \in E(G)$, and $d(S)$ choices for an ordered pair of adjacent vertices $(x,y)$ such that $y \in S$. Each such choice is valid if all of the following conditions are satisfied:
		\begin{enumerate}[label=(\roman*)]
			\item the vertices $\left\{u,v_1,x,y\right\}$ are distinct,
			\item $v_1y \notin E(G')$ and $ux \notin E(G')$,	
			\item the induced degrees of $v_2, \dots, v_k$ are unchanged by the switching.	
		\end{enumerate}
		By the same reasoning as used in the proof of \Cref{lem:degree-switching}, the number of choices for these vertices that do not satisfy one of (i) or (ii) is $O((d(v_1) - i) \Delta^2)$. 
		
		Again, as an upper bound on the number of choices that do not satisfy (iii), we count the choices where $\{u,v_1,x,y\}$ and $\{v_2, \dots, v_k\}$ intersect non-trivially. 		
		Note that $v_1 \neq v_j$ for any $j \geq 2$ by assumption, and $u \neq v_j$ since $u \in \Sbar$. Thus, we only need to consider $x \in \{v_2, \dots, v_k\}$ or $y \in \{v_2, \dots, v_k\}$, which give at most $2(d(v_1) - i) \sum_{j=2}^k d(v_j)$ choices for $\{u,v_1, x,y\}$. Since $k = O(1)$ and each $v_j$ has degree at most $\bo$ (since $\bo$ bounds the maximum degree of the vertices in $\Ssmall$), the number of switchings that can create a given $G' \in C_{i}$ is 
		\begin{align*}
			(d(v_1) - i)d(S) \left( 1 - O\left( \frac{\Delta^2}{d(S)} + \frac{\bo}{d(S)} \right) \right).
		\end{align*} 
		Thus, it follows that
		\begin{align}\label{eqn:conditional-k-ratio}
			\frac{|C_i|}{|C_{i+1}|} = \frac{i+1}{d(v_1) - i}\frac{d(\Sbar)}{d(S)} \left( 1 + O\left( \frac{\Delta^2}{d(S)} + \frac{\bo^2 M}{d(S)^2} \right) \right).
		\end{align}
		Let $p_i = \CProb{d_S(v_1) = i}{d_S(v_2) = i_2, \dots, d_S(v_k) = i_k}$ for $i \in \left\{ 0,\dots, d(v_1) \right\}$. Recall that $\gamma = d(S)/M$. Then 
		\labelcref{eqn:conditional-k-ratio} implies that, for all $i \in \left\{0,\dots, d(v_1) \right\}$,
		\begin{align*}
			\frac{p_{i+1}}{p_{i}} = \frac{d(v_1)-i}{i+1} \frac{d(S)}{d(\Sbar)} \left( 1 + O\left( \frac{\Delta^2}{d(S)} + \frac{\bo^2 M}{d(S)^2} \right) \right) = \frac{d(v_1)-i}{i+1} \frac{\gamma}{1 - \gamma} \left( 1 + O\left( \frac{\Delta^2}{d(S)} + \frac{\bo^2 M}{d(S)^2} \right) \right).
		\end{align*}
		Thus, we can express $p_i$ in terms of $p_0$: 
		\begin{align}\label{eqn:pi-switching}
			p_i &= {d(v_1) \choose i} \left( \frac{1 - \gamma}{\gamma} \right)^i  p_0 \left( 1 + O\left( \frac{\Delta^2}{d(S)} + \frac{\bo^2 M}{d(S)^2} \right) \right)^i\nonumber\\
			&= {d(v_1) \choose i} \left( \frac{1 - \gamma}{\gamma} \right)^i  p_0 \exp \left( O\left( i\left(\frac{\Delta^2}{d(S)} + \frac{\bo^2 M}{d(S)^2} \right)\right)\right)\nonumber\\
			&= {d(v_1) \choose i} \left( \frac{1 - \gamma}{\gamma} \right)^i p_0  \left( 1 + O\left( d(v_1)\left( \frac{\Delta^2}{d(S)} + \frac{\bo^2 M}{d(S)^2} \right) \right) \right),
		\end{align}
		since $i \leq d(v_1)$ and the error term goes to 0 when $d(v_1) \leq \bo$. The sum of all $p_i$ must be equal to 1, and thus
		\begin{align*}
			1 & 
			= \sum_{i=0}^{d(v_1)} \left[ {d(v_1) \choose i} \left( \frac{\gamma}{1-\gamma}\right)^i  \left( 1 + O\left( d(v_1) \left( \frac{\Delta^2}{d(S)} + \frac{\bo^2 M}{d(S)^2} \right) \right) \right) p_0\right].
		\end{align*}
		Since the error is uniformly bounded for all terms in the sum, and all terms are positive, the relative error of the whole sum is at most $O\left( d(v_1) \left( \frac{\Delta^2}{d(S)} + \frac{\bo^2 M}{d(S)^2} \right) \right)$. Thus, 
		\begin{align*}
			\sum_{i=0}^{d(v_1)} \left[ {d(v_1) \choose i} \left( \frac{\gamma}{1-\gamma}\right)^i p_0\right] = 1 + O\left( d(v_1) \left( \frac{\Delta^2}{d(S)} + \frac{\bo^2 M}{d(S)^2} \right) \right). 
		\end{align*}
		It follows from the previous equation that
		\begin{align*}
			p_0 &= \left( \sum_{i=0}^{d(v_1)} {d(v_1) \choose i} \left( \frac{\gamma}{1-\gamma}\right)^i \right)^{-1} \left( 1 + O\left( d(v_1)  \left( \frac{\Delta^2}{d(S)} + \frac{\bo^2 M}{d(S)^2} \right) \right) \right) \\
			&= (1 - \gamma)^{d(v_1)} \left( 1 + O\left( d(v_1)  \left( \frac{\Delta^2}{d(S)} + \frac{\bo^2 M}{d(S)^2} \right) \right) \right).
		\end{align*}
		Applying \Cref{eqn:pi-switching} for all $i \leq d(v_1)$, we obtain that
		\begin{align}\label{eqn:conditional}
			\CProb{d_S(v_1) = i_1}{d_S(v_2) = i_2, \dots, d_S(v_k) = i_k} &= \Prob{Z_{d(v_1)} = i_1} \left( 1 + O\left( d(v_1)  \left( \frac{\Delta^2}{d(S)} + \frac{\bo^2 M}{d(S)^2} \right) \right) \right)
		\end{align}
		for each choice of $i_j \leq d(v_j)$ for all $j \leq k$. By the law of total probability, summing over all ordered tuples $(i_2, \dots, i_k)$ such that $i_j \leq d(v_j)$ for all $j \geq 2$ gives that
		\begin{align}\label{eqn:conditional-sum}
			\Prob{d_S(v_1) = i_1} &= \sum_{(i_2, \dots, i_k)} \prob\left({d_S(v_1) = i_1}\left|{\bigcup_{j=2}^{k} \{d_S(v_j) = i_j\}}\right.\right) \Prob{\bigcup_{j=2}^{k} \{d_S(v_j) = i_j\}}\nonumber\\
			&= \Prob{Z_{d(v_1)} = i_1} \left( 1 + O\left( d(v_1)  \left( \frac{\Delta^2}{d(S)} + \frac{\bo^2 M}{d(S)^2} \right) \right) \right).
		\end{align}
		Since $d(v_1) \leq \bo$, the two required bounds follow.
	\end{proof}

	With this result, we can prove \Cref{lem:dh-ds-comparison}(b). We also use the following Chernoff bound: if $X \sim \Bin(n,p)$ and $\varepsilon > 0$, then 
	\begin{align}\label{eqn:chernoff}
		\Prob{|X - np| \geq \varepsilon np} \leq 2\exp\left( -\varepsilon^2np/3 \right).
	\end{align}
	
	\begin{lemma}\label{lem:low-deg-not-big}
		For all $k$ such that $d(i_k) \leq \delta^{1/32}\co$, the following holds. We have $d_H(k) \leq 2\gamma \delta^{1/32}\co$, and with probability $1 - o(M^{-2})$, we have $d_S(k) \leq 2\gamma \delta^{1/32}\co$.
	\end{lemma}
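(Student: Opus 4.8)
The plan is to treat the two assertions separately, reducing each to a single use of the Chernoff bound \eqref{eqn:chernoff} for a binomial with $t:=\delta^{1/32}\co$ trials. Everything hinges on the identity $\gamma t=\delta^{-1/32}\log M$: since $\delta\to 0$ this is $\omega(\log M)$, so a Chernoff estimate at $t$ has failure probability $M^{-\omega(1)}$, which simultaneously kills the rounding in the definition of $\d_H$ and leaves room for the claimed $o(M^{-2})$.

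\emph{The bound on $\d_H$.} By construction $N(m)$ is exactly the number of entries of $\d_H$ that are $\le m$, and $\d_H$ is non-decreasing of length $s=|S|$. Since $j\mapsto d(i_j)$ is non-decreasing, the indices $k$ with $d(i_k)\le t$ form a prefix $\{1,\dots,k^{\ast}\}$, where $k^{\ast}=\lvert\{v\in S:\,d(v)\le t\}\rvert$; by monotonicity it therefore suffices to prove $N(\lfloor 2\gamma t\rfloor)\ge k^{\ast}$, since then $d_H(k)\le d_H(k^{\ast})\le 2\gamma t$ for every such $k$. For $v\in S$ with $d(v)\le t$ the variable $Z_{d(v)}\sim\Bin{d(v),\gamma}$ is stochastically dominated by $\Bin{t,\gamma}$, and as $\gamma t\to\infty$ we have $\lfloor 2\gamma t\rfloor\ge\tfrac32\gamma t$, so \eqref{eqn:chernoff} gives $\Prob{Z_{d(v)}\le\lfloor 2\gamma t\rfloor}\ge 1-M^{-\Omega(\delta^{-1/32})}$. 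Summing over the $k^{\ast}\le n\le M$ such vertices and dropping the remaining non-negative terms, $\sum_{i\in S}\Prob{Z_{d(i)}\le\lfloor 2\gamma t\rfloor}\ge k^{\ast}-M^{1-\Omega(\delta^{-1/32})}\ge k^{\ast}-o(1)$, whence $N(\lfloor 2\gamma t\rfloor)=\bigl\lfloor\sum_{i\in S}\Prob{Z_{d(i)}\le\lfloor 2\gamma t\rfloor}+\tfrac12\bigr\rfloor\ge k^{\ast}$ for $n$ large.

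\emph{The bound on $\d_S$.} Fix an index $k$ with $d(i_k)\le t$ and write $v:=i_k$. Then $d(v)\le t<\co$ and $d(v)\le\Delta$, so $d(v)\le\min\{\co,\Delta\}=\bo$ and $v\in\Ssmall$; hence \Cref{lem:binomial-degrees}, applied to the single vertex $v$, gives $\Prob{d_S(v)=i}=\Prob{Z_{d(v)}=i}(1+o(1))$ uniformly over $i\le\bo$ — the error term there being $o(1)$ under \eqref{eqn:conditions} together with $\gamma=\omega(M^{-1/13})$. Summing this over $\lfloor 2\gamma t\rfloor\le i\le d(v)$ (all $\le\bo$, with the range empty, hence the event impossible, when $\lfloor 2\gamma t\rfloor>d(v)$), and then applying the domination $Z_{d(v)}\preceq\Bin{t,\gamma}$ and \eqref{eqn:chernoff} exactly as above, $\Prob{d_S(v)>2\gamma t}\le(1+o(1))\Prob{\Bin{t,\gamma}\ge\tfrac32\gamma t}=M^{-\Omega(\delta^{-1/32})}=o(M^{-2})$, which is the second claim.

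I do not expect a genuine obstacle: the only points needing care are the floor/ceiling bookkeeping (absorbed by $\gamma t\to\infty$), checking that $d(i_k)\le t$ forces $d(i_k)\le\bo$ so that \Cref{lem:binomial-degrees} is available, and confirming that the error term in that lemma is $o(1)$. The one idea doing real work is the choice $t=\delta^{1/32}\co$, calibrated so that $\gamma t=\delta^{-1/32}\log M$ beats every fixed multiple of $\log M$, which is what turns both Chernoff bounds into $M^{-\omega(1)}$.
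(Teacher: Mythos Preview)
Your proposal is correct and follows essentially the same route as the paper: Chernoff on $\Bin{t,\gamma}$ with $t=\delta^{1/32}\co$ for the $\d_H$ part, and \Cref{lem:binomial-degrees} plus the same Chernoff estimate for the $\d_S$ part. One small point: the paper reads the second claim as a simultaneous statement over all such $k$ and closes with a union bound (individual failure $o(M^{-3})$, at most $M$ terms), whereas you stop at the per-vertex bound; since your individual estimate is $M^{-\Omega(\delta^{-1/32})}$ this is harmless, but you may want to add the one-line union bound to match the intended reading.
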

	\begin{proof}
		Suppose that $k$ is such that $d(i_k) \leq \delta^{1/32}\co$. The first claim is equivalent to the claim that $N(2\gamma\delta^{1/32}\co) \geq k$, where $N(x)$ is defined in \Cref{def:dh} as the sum over all $i \in S$ of the probability that $Z_{d(i)} \leq x$, rounded to the nearest integer. Since $\d$ is ordered in non-decreasing order, we know that $d(i_j) \leq d(i_k) \leq \delta^{1/32}\co$ for all $j \leq k$. Thus, the Chernoff bound given in \labelcref{eqn:chernoff} implies that
		\begin{align}\label{eqn:small-ish}
			\Prob{Z_{d(i_j)} \leq 2\gamma\delta^{1/32}\co} \leq \Prob{Z_{d(i_k)} \leq 2\gamma \delta^{1/32}\co} \leq \Prob{Z_{\delta^{1/32}\co} \leq 2\gamma \delta^{1/32}\co} = 1 - o(M^{-3})
		\end{align}
		for all $j \leq k$. Therefore,
		\begin{align*}
			\sum_{i \in S} \Prob{Z_{d(i)} \leq 2\gamma\delta^{1/32}\co} \geq k - o(M^{-2}),
		\end{align*}
		and thus $N(2\gamma\delta^{1/32}\co) \geq k$. This proves the first claim. For the second claim, note that if $d(k) \leq \delta^{1/32}\co$, then $k \in \Ssmall$. Therefore, \Cref{lem:binomial-degrees} implies that $\Prob{d_S(k) = j} = \Prob{Z_{d(i_k)} = j}(1 + o(1))$. Thus, 
		\begin{align*}
			\Prob{d_S(k) > 2\gamma\delta^{1/32}\co} = \Prob{Z_{d(i_k)} > 2\gamma\delta^{1/32}\co}(1 + o(1)) \leq \Prob{Z_{\delta^{1/32}\co} > 2\gamma\delta^{1/32}\co}(1 + o(1)).
		\end{align*}
		Combining this with \Cref{eqn:small-ish} then implies that $\Prob{d_S(k) > 2\gamma\delta^{1/32}\co} = o(M^{-3})$. The second claim then follows from the union bound over all such $k$. 
	\end{proof}

\subsection{Concentration of the counts of small vertex degrees}

	Recall the definitions of $Y_i$ and $\ytil{i}$:
	\begin{align*}
		Y_i = \sum_{j \in \Ssmall} \ind{d_S(j) = i} \quad \mbox{and} \quad \ytil{i} = \sum_{j \in \Ssmall} \Prob{Z_{d(j)} = i}.
	\end{align*}
	We can show concentration of $Y_i$ around $\ytil{i}$ for all $i \leq \co$. Firstly, \Cref{lem:binomial-degrees} implies that 
	\begin{align*}
		\Exp{Y_i} = \sum_{j \in \Ssmall} \Prob{d_S(j) = i} = \ytil{i}\left( 1 + O\left( \bo \left( \frac{\Delta^2}{d(S)} + \frac{\bo^2 M}{d(S)^2} \right) \right) \right).
	\end{align*}
	We also know that if $\d_S$ satisfies the concentration bound given in \Cref{lem:dh-ds-comparison}(a) then $n_i(\d_S) = Y_i$ for all $i \leq \frac{1}{2}\gamma \co$. Since this is \aas\ true, studying $Y_i$ and $n_i(\d_S)$ are effectively equivalent for small $i$. It also follows that 
	\begin{align*}
		n_i(\d_H) = N(i) - N(i-1) &= \left\lfloor \sum_{i \in S} \Prob{Z_j \leq i} + \frac{1}{2} \right\rfloor - \left\lfloor \sum_{i \in S} \Prob{Z_j \leq i - 1} + \frac{1}{2} \right\rfloor\\
								&= \sum_{i \in S} \Prob{Z_j = i} \pm 1 =  \sum_{i \in \Ssmall} \Prob{Z_j = i} \pm (1 + o(M^{-5})). 
	\end{align*}
	Thus, $n_i(\d_H) = \ytil{i} \pm 2$.	We study $Y_i$ rather than $n_i(\d_S)$ as the analysis is slightly neater.
	We get the following bound on the variance of $Y_i$.
	\begin{lemma}
		For all $i \leq \co$, $\Var{Y_i} \leq \Exp{Y_i}\left( 1 + O\left( \bo \left( \frac{\Delta^2}{d(S)} + \frac{\bo^2 M}{d(S)^2} \right) \right) \Exp{Y_i} \right)$. 
	\end{lemma}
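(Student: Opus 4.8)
The plan is to expand the variance of the indicator sum and control the diagonal and off-diagonal contributions separately, using \Cref{lem:binomial-degrees} with $k=2$ as the one nontrivial input. Write
\begin{align*}
	\Var{Y_i} = \sum_{j \in \Ssmall} \Var{\ind{d_S(j) = i}} + \sum_{\substack{j,j' \in \Ssmall \\ j \neq j'}} \Cov{\ind{d_S(j) = i},\, \ind{d_S(j') = i}}.
\end{align*}
First I would dispose of the trivial part of the range: every $j \in \Ssmall$ has $d(j) \leq \bo$ by the definitions of $\Ssmall$ and $\bo = \min\{\co,\Delta\}$, so $d_S(j) \leq d(j) \leq \bo$ and hence $Y_i \equiv 0$ whenever $i > \bo$. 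So we may assume $i \leq \bo$, which is exactly the regime in which \Cref{lem:binomial-degrees} applies.

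For the diagonal terms, simply bound $\Var{\ind{d_S(j)=i}} \leq \Exp{\ind{d_S(j)=i}} = \Prob{d_S(j)=i}$, so that summing over $j \in \Ssmall$ gives a contribution of at most $\Exp{Y_i}$. For the off-diagonal terms, fix distinct $j,j' \in \Ssmall$ and apply \Cref{lem:binomial-degrees} with $k=2$, $(v_1,v_2)=(j,j')$, and $i_1=i_2=i\leq\bo$, giving
\begin{align*}
	\Prob{d_S(j)=i,\, d_S(j')=i} = \Prob{d_S(j)=i}\Prob{d_S(j')=i}\left( 1 + O\left( \bo\left( \frac{\Delta^2}{d(S)} + \frac{\bo^2 M}{d(S)^2} \right) \right) \right),
\end{align*}
with the implied constant uniform over the pair and over $i$. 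Subtracting $\Prob{d_S(j)=i}\Prob{d_S(j')=i}$, the covariance is at most $\Prob{d_S(j)=i}\Prob{d_S(j')=i}\cdot O\!\left( \bo\left( \Delta^2/d(S) + \bo^2 M/d(S)^2 \right) \right)$ (and if it happens to be negative, that only helps the upper bound).

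Finally, sum over all ordered pairs $j\neq j'$ and use $\sum_{j\neq j'}\Prob{d_S(j)=i}\Prob{d_S(j')=i} \leq \big(\sum_{j\in\Ssmall}\Prob{d_S(j)=i}\big)^2 = \Exp{Y_i}^2$; adding the diagonal contribution $\Exp{Y_i}$ then yields the claimed
\begin{align*}
	\Var{Y_i} \leq \Exp{Y_i}\left( 1 + O\left( \bo\left( \frac{\Delta^2}{d(S)} + \frac{\bo^2 M}{d(S)^2} \right) \right)\Exp{Y_i} \right).
\end{align*}
I do not expect a genuine obstacle here: the essential work — the near-product estimate for pairs of induced degrees with an error term that is uniform in $i$ and in the chosen pair — is already packaged in \Cref{lem:binomial-degrees}. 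The only points needing mild care are confirming that this uniformity indeed lets the error survive summation over the $\Theta(|\Ssmall|^2)$ pairs, and the sign bookkeeping when passing from the two-sided multiplicative error in the product formula to a one-sided upper bound on the covariance; both are routine.
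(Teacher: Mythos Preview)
Your proposal is correct and follows essentially the same argument as the paper: expand the variance of the indicator sum, bound the diagonal terms by $\Exp{Y_i}$, and use \Cref{lem:binomial-degrees} with $k=2$ to control each covariance by $\Prob{d_S(j)=i}\Prob{d_S(j')=i}$ times the stated error, then sum. Your explicit reduction to $i \leq \bo$ is a minor addition, but otherwise the structure, the key lemma invoked, and the bookkeeping all match the paper's proof.
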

	\begin{proof}
		Let $V_k$ be the indicator variable for the event that vertex $k \in \Ssmall$ has induced degree $i$. Then $Y_i = \sum_{k \in \Ssmall} V_k$.	
		This implies that
		\begin{align}\label{eqn:var-sum}
			\Var{Y_i} &= \Var{\sum_{k \in \Ssmall} V_k} = \sum_{k \in \Ssmall} \Var{V_k} + \sum_{j \neq k} \Cov{V_j, V_k},
		\end{align}
		where the last sum is over all ordered pairs $(j,k) \in \Ssmall^2$ where $j \neq k$.
		Each $V_k$ is a Bernoulli random variable, and thus
		$\Var{V_k} = \Prob{d_S(k) = i} \left( 1 - \Prob{d_S(k) = i} \right)$. 
		So the first summation is bounded by
		\begin{align}\label{eqn:var}
			\sum_{k \in \Ssmall} \Var{V_k} = \sum_{k \in \Ssmall} \Prob{d_S(k) = i}(1 - \Prob{d_S(k) = i}) \leq \Exp{Y_i}.
		\end{align}
		Now consider the covariance summation in \Cref{eqn:var-sum}. 
		An application of \Cref{lem:low-degree-adjacency} provides a bound on the covariance terms in the second summation when $j,k \in \Ssmall$:
		\begin{align*}
			\Cov{V_j, V_k} &= \Prob{d_S(j) = i,\ d_S(k) = i} - \Prob{d_S(j) = i} \Prob{d_S(k) = i}\\
			&= \Prob{d_S(j) = i} \Prob{d_S(k) = i} O\left( \bo \left( \frac{\Delta^2}{d(S)} + \frac{\bo^2 M}{d(S)^2} \right) \right).
		\end{align*}
		Thus, the summation in \Cref{eqn:var-sum} of the covariances over all ordered pairs $(j,k) \in \Ssmall^2$ where $j \neq k$ is equal to
		\begin{align*}
			&\left(\sum_{j \in \Ssmall} \sum_{k \in \Ssmall} \Prob{d_S(j) = i} \Prob{d_S(k) = i}  - \sum_{j \in \Ssmall} \Prob{d_S(j) = i}^2 \right) O\left( \bo \left( \frac{\Delta^2}{d(S)} + \frac{\bo^2 M}{d(S)^2} \right) \right)\\
			&= \left(\left( \sum_{k \in \Ssmall} \Prob{d_S(k) = i} \right)^2 - \sum_{j \in \Ssmall} \Prob{d_S(j) = i}^2 \right) O\left( \bo \left( \frac{\Delta^2}{d(S)} + \frac{\bo^2 M}{d(S)^2} \right) \right).
		\end{align*}
		Noting that $\sum_{j \in \Ssmall} \Prob{d_S(j) = i}^2 \in [0, \Exp{Y_i}^2]$, we obtain
		\begin{align}\label{eqn:cov}
			\sum_{j \neq k \in \Ssmall} \Cov{V_j, V_k} &\leq \Exp{Y_i}^2 \cdot O\left( \bo \left( \frac{\Delta^2}{d(S)} + \frac{\bo^2 M}{d(S)^2} \right) \right).
		\end{align}
		Combining (\ref{eqn:var}) and (\ref{eqn:cov}) gives an upper bound on the variance of $Y_i$:
		\begin{align*}
			\Var{Y_i} &\leq \Exp{Y_i} 
			+ O\left( \bo \left( \frac{\Delta^2}{d(S)} + \frac{\bo^2 M}{d(S)^2} \right) \right) \Exp{Y_i}^2.
		\end{align*}
		The claim of the lemma immediately follows.
	\end{proof}

	\begin{remark}\label{rem:error-bound}
		Recall that we assume $\Delta^2 (\gamma^{-1} \log M)^{12} \leq \delta \gamma M$ for some $\delta \to 0$ slowly. Since we suppose that $\delta = \omega(\log^{-1} M)$, this implies a bound on $\frac{\Delta^2}{d(S)} + \frac{\co^2 M}{d(S)^2}$:
		\begin{align*}
			\frac{\Delta^2}{d(S)} &\leq \delta\frac{1}{\gamma^{-12} \log^{12} M} \leq \delta^{1/2} \gamma \co^{-8} \log^{-1}M,\\
			\frac{\co^2 M}{d(S)^2} &= \frac{\co^2M}{\gamma^2 M^2} \leq \gamma \co^{-8} o(\log^{-2} M)  \leq \delta^{1/2} \gamma \co^{-8} \log^{-1}M,
		\end{align*}
	and in particular, $\frac{\Delta^2}{d(S)} + \frac{\co^2 M}{d(S)^2} = o(\gamma \co^{-8} \log^{-1}M)$. 
	\end{remark}

	\begin{lemma}\label{lem:concentration-big-small}
		Suppose $i \leq \frac{1}{2}\gamma\co$. If $\Exp{Y_i} \geq \gamma^{-1}\co^{7}\sqrt{\log M}$, then
		\begin{align*}
			\Prob{|Y_i - \Exp{Y_i}| \geq \frac{\gamma\Exp{Y_i}}{2\co^{3}}} = O\left( \frac{\gamma^{-1}\co^{-1}}{\sqrt{\log M}} \right),
		\end{align*}
		and if $\Exp{Y_i} \leq \gamma^{-1}\co^{7}\sqrt{\log M}$, then
		\begin{align*}
			\Prob{|Y_i - \Exp{Y_i}| \geq \co^{4}\sqrt{\log M} } = O\left( \frac{\gamma^{-1}\co^{-1}}{\sqrt{\log M}} \right).
		\end{align*} 
	\end{lemma}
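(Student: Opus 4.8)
The plan is a routine second-moment argument via Chebyshev's inequality, the only work being to track the powers of $\co$, $\gamma$ and $\log M$ carefully in the two regimes. First I would feed the variance bound established just above into the estimates collected in \Cref{rem:error-bound}. Since $\bo \leq \co$ and $\frac{\Delta^2}{d(S)} + \frac{\co^2 M}{d(S)^2} = o\!\left(\gamma\co^{-8}\log^{-1}M\right)$, the error factor in the variance bound satisfies $\bo\left(\frac{\Delta^2}{d(S)} + \frac{\bo^2 M}{d(S)^2}\right) = o\!\left(\gamma\co^{-7}\log^{-1}M\right)$, so that
\begin{align*}
\Var{Y_i} \leq \Exp{Y_i} + o\!\left(\frac{\gamma}{\co^{7}\log M}\right)\Exp{Y_i}^2 .
\end{align*}
If $\Exp{Y_i}=0$ then $Y_i \equiv 0$ and both statements are trivial, so we may assume $\Exp{Y_i}>0$. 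From here both claims follow by applying Chebyshev's inequality with the deviation appropriate to each regime.

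For the first claim, with $\Exp{Y_i} \geq \gamma^{-1}\co^{7}\sqrt{\log M}$, Chebyshev's inequality gives
\begin{align*}
\Prob{|Y_i - \Exp{Y_i}| \geq \frac{\gamma\Exp{Y_i}}{2\co^{3}}} \leq \frac{4\co^{6}}{\gamma^{2}\Exp{Y_i}^{2}}\Var{Y_i} \leq \frac{4\co^{6}}{\gamma^{2}\Exp{Y_i}} + o\!\left(\frac{1}{\gamma\co\log M}\right),
\end{align*}
and the first term is at most $\frac{4}{\gamma\co\sqrt{\log M}}$ by the lower bound on $\Exp{Y_i}$, while the second is of smaller order; both are $O\!\left(\gamma^{-1}\co^{-1}/\sqrt{\log M}\right)$. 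For the second claim, with $\Exp{Y_i} \leq \gamma^{-1}\co^{7}\sqrt{\log M}$, Chebyshev's inequality gives
\begin{align*}
\Prob{|Y_i - \Exp{Y_i}| \geq \co^{4}\sqrt{\log M}} \leq \frac{\Var{Y_i}}{\co^{8}\log M} \leq \frac{\Exp{Y_i}}{\co^{8}\log M} + o\!\left(\frac{\gamma\,\Exp{Y_i}^{2}}{\co^{15}\log^{2}M}\right),
\end{align*}
and substituting the upper bounds $\Exp{Y_i} \leq \gamma^{-1}\co^{7}\sqrt{\log M}$ and $\Exp{Y_i}^2 \leq \gamma^{-2}\co^{14}\log M$ bounds both terms by $O\!\left(\gamma^{-1}\co^{-1}/\sqrt{\log M}\right)$, as required.

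I do not anticipate any genuine obstacle here: the content is entirely in the preceding variance estimate, and the present step is bookkeeping. The one thing to check is that the split point $\gamma^{-1}\co^{7}\sqrt{\log M}$ is precisely the value at which the $\Exp{Y_i}$ contribution and the $\Exp{Y_i}^2$ contribution both match the target $O\!\left(\gamma^{-1}\co^{-1}/\sqrt{\log M}\right)$ in the respective regimes. The hypothesis $i \leq \tfrac12\gamma\co \leq \co$ is used only to invoke the variance bound above (and, implicitly through \Cref{lem:binomial-degrees}, the fact that the relevant vertices lie in $\Ssmall$); it plays no further role.
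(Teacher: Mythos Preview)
Your proposal is correct and follows essentially the same route as the paper: both apply Chebyshev's inequality using the variance bound from the preceding lemma together with \Cref{rem:error-bound}, splitting into the two regimes at the threshold $\gamma^{-1}\co^{7}\sqrt{\log M}$. The only cosmetic difference is that you pre-simplify the error factor $\bo\bigl(\tfrac{\Delta^2}{d(S)} + \tfrac{\bo^2 M}{d(S)^2}\bigr) = o(\gamma\co^{-7}\log^{-1}M)$ once at the outset, whereas the paper substitutes from \Cref{rem:error-bound} inside each case; the arithmetic and the resulting bounds are the same.
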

	\begin{proof}
		First suppose that $\Exp{Y_i} \geq \gamma^{-1}\co^{7}\sqrt{\log M}$. Applying Chebyshev's inequality with $t  = \alpha \Exp{Y_i}$ gives
		\begin{align*}
			\Prob{ |Y_i - \Exp{Y_i}| \geq \alpha \Exp{Y_i} } &\leq \frac{\Var{Y_i}}{\alpha^2 \Exp{Y_i}^2}
			\leq  \frac{1}{\alpha^2 \Exp{Y_i}} + \frac{1}{\alpha^2} O\left( \bo \left( \frac{\Delta^2}{d(S)} + \frac{\bo^2 M}{d(S)^2} \right) \right).
		\end{align*}
		Now let $\alpha = \frac{1}{2}\gamma\co^{-3}$. Then since $\bo \leq \co$,
		\begin{align*}
			\frac{\Var{Y_i}}{\alpha^2 \Exp{Y_i}^2} &\leq  \frac{4\co^{6}}{\gamma^2\Exp{Y_i}} +  4\gamma^{-2}\co^{7} O\left( \frac{\Delta^2}{d(S)} + \frac{\co^2 M}{d(S)^2} \right)
												\leq 8(\gamma \co \sqrt{\log M})^{-1},
		\end{align*}
		since \Cref{rem:error-bound} states that $\frac{\Delta^2}{d(S)} + \frac{\co^2 M}{d(S)^2} = o(\gamma \co^{-8}\log^{-1} M)$. This proves the first part of the lemma. Now suppose that $\Exp{Y_i} \leq \gamma^{-1}\co^{7}\sqrt{\log M}$. Then it follows that 
		\begin{align*}
			\Var{Y_i} &= \Exp{Y_i} + \Exp{Y_i}^2 O\left( \bo \left( \frac{\Delta^2}{d(S)} +  \frac{\bo^2 M}{d(S)^2} \right) \right)
			\leq \gamma^{-1}\co^{7}\sqrt{\log M} (1 + o(1)).
		\end{align*}
		Then applying Chebyshev's inequality with $t := \co^{4}\sqrt{\log M}$ gives that
		\begin{align*}
			\Prob{|Y_i - \Exp{Y_i}| \geq t} \leq \frac{\gamma^{-1}\co^{7} \sqrt{\log M} (1 + o(1))}{\co^{8}\log M} \leq 2 (\gamma \co \sqrt{\log M})^{-1}.
		\end{align*}
		This completes the proof.
	\end{proof}

	We note here that the above proof is the only point in the paper that we use the assumption that $\delta = \Omega((\log\log M)^{-1})$. If the bound on $\frac{J^2M}{d(S)^2}$ in \Cref{rem:error-bound} is replaced with $\delta^{1/4}\gamma\co^{-8}\log^{-1} M$, then the bound on $\delta$ can be removed entirely. This could be useful if slightly different error bounds on $\d_S$ were required, trading precision in the frequencies of the smaller entries for greater precision in the values of the larger entries.

	Now we prove \Cref{lem:dh-ds-comparison}(c).
	\begin{corollary}\label{lem:low-degree-concentration}
		With probability $1 - o(1)$, 
		\begin{align*}
			\left\lvert n_i(\d_S) - n_i(\d_H) \right\rvert \leq \frac{\gamma n_i(\d_H)}{\co^{3}} + \gamma\co^{5}
		\end{align*}
		for all $i \leq \frac{1}{2}\gamma \co$. 
	\end{corollary}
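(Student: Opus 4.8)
The plan is to reduce the claim to a concentration statement about the random variables $Y_i = \sum_{v\in\Ssmall}\ind{d_S(v)=i}$ around $\ytil{i} = \sum_{v\in\Ssmall}\Prob{Z_{d(v)}=i}$, and then apply \Cref{lem:concentration-big-small} together with a union bound over $i$. First I would record the two ``bridge'' facts. On the $\d_H$ side, the discussion following \Cref{lem:binomial-degrees} already gives $n_i(\d_H) = \ytil{i}\pm 2$ deterministically for all $i\le\tfrac12\gamma\co$, so it is enough to compare $Y_i$ with $\ytil{i}$. On the $\d_S$ side, I would invoke the concentration of large degrees: by \Cref{cor:high-degree-crossover}, with probability $1-o(M^{-5})$ every vertex $v$ with $d(v)>\delta^{-1/32}\gamma^{-1}\log M=\delta^{1/32}\co$ satisfies $d_S(v)\ge\gamma d(v)(1-8\delta^{1/64})>\tfrac12\gamma\co$ for $n$ large; since $\delta^{1/32}\co<\co$, this applies in particular to every $v\in\Sbig$, so on this event no vertex outside $\Ssmall$ has induced degree at most $\tfrac12\gamma\co$, whence $n_i(\d_S)=Y_i$ for all $i\le\tfrac12\gamma\co$. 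The problem thus becomes: show that with probability $1-o(1)$, $|Y_i-\ytil{i}|\le \tfrac{\gamma\ytil{i}}{\co^3}+\tfrac12\gamma\co^5$ for all $i\le\tfrac12\gamma\co$.

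For a fixed such $i$, I would split on the size of $\Exp{Y_i}$, using that \Cref{lem:binomial-degrees} and \Cref{rem:error-bound} give $\Exp{Y_i}=\ytil{i}\big(1+o(\co^{-6})\big)$, so working with $\Exp{Y_i}$ or $\ytil{i}$ is interchangeable. If $\Exp{Y_i}\ge \gamma^{-1}\co^{7}\sqrt{\log M}$, then \Cref{lem:concentration-big-small} bounds $|Y_i-\Exp{Y_i}|\le \tfrac{\gamma\Exp{Y_i}}{2\co^3}$ off an event of probability $O(\gamma^{-1}\co^{-1}/\sqrt{\log M})$; combining this with $|\Exp{Y_i}-\ytil{i}|=\ytil{i}\,o(\co^{-6})$, the estimate $\co^{-6}=o(\gamma\co^{-3})$, and $n_i(\d_H)=\ytil{i}\pm2$, everything is absorbed into $\tfrac{\gamma n_i(\d_H)}{\co^3}$ plus a negligible $O(1)$ term dumped into $\gamma\co^5$. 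If instead $\Exp{Y_i}\le \gamma^{-1}\co^{7}\sqrt{\log M}$, \Cref{lem:concentration-big-small} gives $|Y_i-\Exp{Y_i}|\le \co^{4}\sqrt{\log M}$ off an event of the same probability; since $\gamma\co=\delta^{-1/16}\log M\ge\log M$ we have $\co^{4}\sqrt{\log M}=o(\gamma\co^5)$ and $|\Exp{Y_i}-\ytil{i}|=o(\gamma^{-1}\co\sqrt{\log M})=o(\gamma\co^5)$, so the total deviation is at most $\tfrac12\gamma\co^5$. Finally I would union bound over the at most $\tfrac12\gamma\co+1$ values of $i$: the total failure probability is $O(\gamma\co)\cdot O(\gamma^{-1}\co^{-1}/\sqrt{\log M})=O(1/\sqrt{\log M})=o(1)$, which with the $o(M^{-5})$ from the large-degree reduction proves the corollary (in fact with probability $1-O(1/\sqrt{\log M})$, consistent with \Cref{lem:dh-ds-comparison}).

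The routine-but-delicate part is the bookkeeping in the two cases, i.e. checking that each of $\ytil{i}\,o(\co^{-6})$, the rounding slack $n_i(\d_H)=\ytil{i}\pm2$, and the multiplicative $1+o(1)$ relating $\Exp{Y_i}$ to $\ytil{i}$ fits inside $\tfrac{\gamma n_i(\d_H)}{\co^3}+\gamma\co^5$; this rests entirely on the relations $\co^{-3}=o(\gamma)$ and $\sqrt{\log M}=o(\gamma\co)$, both of which follow from \labelcref{eqn:conditions} and the definition $\co=\delta^{-1/16}\gamma^{-1}\log M$. The only genuinely non-mechanical step is the reduction $n_i(\d_S)=Y_i$ — ruling out a large-degree vertex landing in the low-induced-degree window $[0,\tfrac12\gamma\co]$ — but this is precisely what \Cref{cor:high-degree-crossover} supplies, and since that lemma is proved independently of part~(a) there is no circularity.
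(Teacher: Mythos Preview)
Your proposal is correct and follows essentially the same approach as the paper: reduce to $Y_i$ versus $\ytil{i}$ via the bridge facts $n_i(\d_H)=\ytil{i}\pm 2$ and $n_i(\d_S)=Y_i$ (the latter from \Cref{cor:high-degree-crossover}), then apply \Cref{lem:concentration-big-small} and a union bound over the $O(\gamma\co)$ values of $i$. The only cosmetic difference is that the paper combines the two cases of \Cref{lem:concentration-big-small} into a single inequality chain rather than splitting explicitly, and absorbs $\co^{4}\sqrt{\log M}$ into $\gamma\co^{5}$ in one step using $\gamma\co=\delta^{-1/16}\log M$.
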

	\begin{proof}
		First we bound $\Exp{Y_i}$ in terms of $n_i(\d_H)$ using \Cref{lem:binomial-degrees} and \Cref{rem:error-bound} bounding the error term. This implies that for each $i\leq\co$,
		\begin{align*}
			\Exp{Y_i} = \sum_{j \in \Ssmall} \Prob{d_S(j) = i} = \sum_{j \in \Ssmall} \Prob{Z_{d(j)} = i} (1 + o(\gamma\co^{-8})) = (n_i(\d_H) \pm 2) (1 + o(\gamma\co^{-8})),
		\end{align*}
		since $n_i(\d_H) = \ytil{i} \pm 2$ for all such $i$. We also know that, with probability $1 - o(1)$, $Y_i = n_i(\d_S)$ for all $i \leq \frac{1}{2}\gamma \co$. Then we apply \Cref{lem:concentration-big-small} and the triangle inequality to say that \aas\, for all $i \leq \frac{1}{2}\gamma\co$, 
		\begin{align*}
			|n_i(\d_S) - n_i(\d_H)| \leq |Y_i - \ytil{i}| + 2 &\leq |Y_i - \Exp{Y_i}| + |\Exp{Y_i} - \ytil{i}| + 2 \\
			&\leq \frac{\gamma\Exp{Y_i}}{2\co^{3}} + \co^{4}\sqrt{\log M} + o(\ytil{i} \gamma \co^{-8}) + 2 \\
			&= \frac{\gamma\ytil{i}(1 + o(1))}{2\co^{3}} + \co^{4}\sqrt{\log M} + 1  + o(\ytil{i} \gamma \co^{-8}) +2\\
			&\leq \frac{\gamma n_i(\d_H)}{\co^{3}} + \gamma\co^{5},
		\end{align*}
		since $\co = \delta^{-1/16}\gamma^{-1} \log M$. This completes the proof. 
	\end{proof}

	The factor of $\gamma$ in the ``multiplicative'' error term (the $\gamma n_i(\d_H)/\co^3$ term) is particularly useful because it implies that, for $k \leq \frac{1}{2}\gamma \co$, \aas\
	\begin{align*}
		\left|\sum_{i=0}^{k} n_i(\d_H) - n_i(\d_S) \right| \leq \sum_{i=0}^k \left(\frac{\gamma n_i(\d_H)}{\co^{3}} + \gamma\co^{5} \right) \leq \gamma^2 \co^6 + \frac{\gamma|S|}{\co^{3}}.
	\end{align*}
	Since $|S| \leq d(S) = \gamma M$ and $M_H \sim \gamma ^2 M$, this implies that this sum is $O(M_H/\co^{3})$. This is particularly useful in certain proofs. The bounds given in \Cref{lem:concentration-big-small} are not optimal in either the multiplicative or the additive error. An similar proof of \Cref{lem:concentration-big-small} with a different threshold for $\Exp{Y_i}$ can give better error margins in one direction or the other, but this choice strikes a balance that is sufficient for our purposes. With this result, we can now prove \Cref{lem:dh-ds-comparison}.
	
	\begin{proof}[Proof of \Cref{lem:dh-ds-comparison}]
		\Cref{lem:low-degree-concentration} proves part (c), and part (b) follows from \Cref{lem:low-deg-not-big}.
		Now it remains to prove part (a).
		We show in \Cref{cor:high-degree-crossover} that \aas\ $d_S(k) = \gamma d(i_k) \left( 1\pm 8\delta^{1/64}\right)$ uniformly for all $k$ such that $d(i_k) \geq \delta^{1/32}\co$. Now we prove claim (a) by showing that deterministically $d_H(k)  = \gamma d(i_k) \left( 1\pm 3\delta^{1/64}\right)$. For each $i \in S$ such that $d(i) \geq \delta^{1/32}\co$, define
		\begin{align*}
			d(i)^- &= \gamma d(i) \left(1 - 3\delta^{1/64}\right)\\
			\mbox{and} &\\
			d(i)^+ &= \gamma d(i) \left(1 + 3\delta^{1/64}\right).
		\end{align*}
		The Chernoff bound given in \eqref{eqn:chernoff} implies that $\Prob{|Z_{d(j)} - \gamma d(j)| \geq \delta^{1/64}\gamma d(j)} \leq 2\exp{-3\delta^{1/32}\gamma\co} = 2M^{-3}$. Thus, 
		\begin{align}
			\Prob{Z_{d(i)} \leq d(i)^+} - \Prob{Z_{d(i)} < d(i)^-} = 1 - o(M^{-2}).\label{eqn:binomial-big}
		\end{align}
		We use \Cref{eqn:binomial-big} to show that
		\begin{align} \label{eqn:N}
			& N\left( d(i_k)^- -1 \right) \leq k-1 \quad \mbox{and} \quad N\left(d(i_k)^+ \right) \geq k
		\end{align}
		for all $k \geq \ell$. Together these statements imply that for all $i_k$ such that $d(i_k) \geq \delta^{1/32}\co$, the $k$\textsuperscript{th} entry of $\d_H$ is between $d(i_k)^-$ and $d(i_k)^+$. To prove these claims, recall that the degree sequence $\d$ is ordered in non-decreasing order, so $d(i_j) \leq d(i_k)$ for all $j \leq k$ and $d(i_j) \geq d(i_k)$ for all $j \geq k$. 
		This means that, by \Cref{eqn:binomial-big},
		\begin{align*}
			\sum_{j \geq k} \Prob{Z_{d(i_j)} < d(i_k)^{-}} = o(M^{-1}).
		\end{align*}
		This in turn implies that
		\begin{align*}
			N\left( d(i_k)^- -1 \right) &= \left\lfloor \sum_{i \in S} \Prob{Z_{d(i)} < d(i_k)^{-}} + \frac{1}{2} \right\rfloor\\
			&= \left\lfloor \sum_{j < k} \Prob{Z_{d(i_j)} < d(i_k)^{-}} + \sum_{j \geq k} \Prob{Z_{d(i_j)} < d(i_k)^{-}} + \frac{1}{2} \right\rfloor\\
			&\leq \left \lfloor k-1 + o(M^{-1}) + \frac{1}{2} \right\rfloor\\
			&= k-1.
		\end{align*}
		This proves the first statement in \Cref{eqn:N}. An analogous idea proves the second half:
		\begin{align*}
			N\left( d(i_k)^+ \right) &= \left\lfloor \sum_{i \in S} \Prob{Z_{d(i)} \leq d(i_k)^{+}} + \frac{1}{2} \right\rfloor\\
			&= \left\lfloor \sum_{j \leq k} \Prob{Z_{d(i_j)} \leq d(i_k)^{+}} + \sum_{j > k} \Prob{Z_{d(i_j)} \leq d(i_k)^{+}} + \frac{1}{2} \right\rfloor\\
			&\geq \left \lfloor k( 1 - o(M^{-1})) + \frac{1}{2} \right\rfloor\\
			&= k.
		\end{align*}
		The combination of these mean that the $k$\textsuperscript{th} entry of $\d_H$ has a value between $d(i_k)^-$ and $d(i_k)^+$, which proves the part (a) of the lemma. This completes the proof.
	\end{proof}

\section{Basic properties of the induced degree sequence}

In this section we use \Cref{lem:dh-ds-comparison} to prove some basic properties of the sequences $\d_S$ and $\d_H$, such as the sum of their entries (or squares of the entries) and the number of non-zero entries in each sequence. These results are useful for proving a wide variety of properties of $G[S]$, including the threshold for the giant component. 
Throughout this section we implicitly suppose that $\d$ is a sequence of length $n$ with even sum and all entries at least 1 and that $S \subset [n]$ such that $(\d, S)$ satisfy \eqref{eqn:conditions}.

\subsection{Total degree}

	For brevity, we define $M_H := M(\d_H)$ and $M_S := M(\d_S)$. For an arbitrary sequence of non-negative integers $\d$, define $M_2(\d) = \sum_{i=1}^{n(\d)} d(i)^2$. 

	\begin{lemma}\label{lem:m-concentration} 
		$M(\d_H) \sim \gamma^2 M$ always. Furthermore, \aas\ $M(\d_S) \sim M(\d_H)$ and $M_2(\d_S) - M_2(\d_H) = o(M_H)$. 
	\end{lemma}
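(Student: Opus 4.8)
## Proof proposal for Lemma \ref{lem:m-concentration}

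My plan is to treat the three claims in turn, exploiting the split of both $\d_S$ and $\d_H$ into contributions from $\Ssmall$ and $\Sbig$, and applying the estimates already proved. For the deterministic claim $M(\d_H) \sim \gamma^2 M$, I would work directly from \Cref{def:dh}. Recall $n_k(\d_H) = N(k) - N(k-1)$, so by summation by parts $M(\d_H) = \sum_k k\, n_k(\d_H) = \sum_{k \geq 0}\big(|S| - N(k)\big) + O(\Delta)$, and $|S| - N(k) = \sum_{i \in S} \Prob{Z_{d(i)} > k} \pm 1$. Summing over $k$ gives $M(\d_H) = \sum_{i \in S}\Exp{Z_{d(i)}} \pm O(\Delta) = \gamma \sum_{i \in S} d(i) \pm O(\Delta) = \gamma^2 M \pm O(\Delta)$, and since \labelcref{eqn:conditions} forces $\Delta^2 = o(\gamma^2 M)$, hence $\Delta = o(\gamma^2 M)$ (using $\gamma^2 M \geq \gamma M^{1/2}\cdot(\text{something large})$ via $\gamma = \omega(M^{-1/13})$), the relative error is $o(1)$. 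I should double-check the error term is genuinely $o(\gamma^2 M)$ using the stated consequence $\gamma = \omega(M^{-1/13})$ together with $\Delta^2 = o(\gamma M)$.

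For $M(\d_S) \sim M(\d_H)$, I would split $M(\d_S) = \sum_{k \in \Sbig} d_S(k) + \sum_{k \in \Ssmall} d_S(k)$ and similarly for $\d_H$. On $\Sbig$, part (a) of \Cref{lem:dh-ds-comparison} gives $d_S(k) = \gamma d(i_k)(1 \pm 8\delta^{1/64})$ \aas\ uniformly, while on $\Ssmall$ we have $d(i_k) \leq \co$ and so the total contribution of $\Ssmall$ to either sequence is at most $|\Ssmall|\co \leq n\co$, which I would argue is $o(\gamma^2 M)$: indeed each small-degree vertex contributes $\le \co = \delta^{-1/16}\gamma^{-1}\log M$, and $n \le M$, so $n\co = o(\gamma^2 M)$ follows once more from $\gamma = \omega(M^{-1/13})$ and the powers of $\log M$ built into \labelcref{eqn:conditions} (this is exactly the sort of slack the extra $\log^{12}$ factors were put in for). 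The contribution to $M(\d_S)$ from $\Sbig$ is $\gamma \sum_{k \in \Sbig} d(i_k)(1 \pm 8\delta^{1/64})$, and $\sum_{k \in \Sbig} d(i_k) = d(S) - d(\Ssmall) = \gamma M(1 - o(1))$ since $d(\Ssmall) \le n\co = o(\gamma M)$; hence this contribution is $\gamma^2 M(1 \pm O(\delta^{1/64}))$, matching $M(\d_H)$. Thus $M(\d_S) \sim \gamma^2 M \sim M(\d_H)$ \aas.

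For $M_2(\d_S) - M_2(\d_H) = o(M_H)$, I would again split at the threshold $d(i_k) \geq \delta^{1/32}\co$ (or $\delta^{1/32}\co$ versus below it, matching parts (a)/(b)). For the large part, part (a) gives $d_S(k)^2 = d_H(k)^2(1 \pm O(\delta^{1/64}))$, so $\big|\sum_{\text{large}} d_S(k)^2 - d_H(k)^2\big| = O(\delta^{1/64}) \sum_{\text{large}} d_H(k)^2 = O(\delta^{1/64})\, M_2(\d_H)$; this is $o(M_H)$ provided $M_2(\d_H) = O(M_H/\delta^{1/64})$, which I would establish from $M_2(\d_H) \le \Delta_H \cdot M_H$ together with $\Delta_H \le \gamma\Delta(1+o(1))$ (the largest entry of $\d_H$ is asymptotically $\gamma\Delta$ by part (a) applied to the top vertex, assuming $d(n) \ge \delta^{1/32}\co$; if not, all entries are small and $M_2(\d_H) \le \co M_H$), and then $\gamma\Delta \cdot \delta^{1/64} = o(\gamma^2 M /  M_H \cdot M_H) $— more carefully, I need $\delta^{1/64}\gamma\Delta = o(M_H/M_H)=o(1)$? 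No: I need $\delta^{1/64} M_2(\d_H) = o(M_H)$, i.e. $\delta^{1/64}\Delta_H = o(1)$ when $M_2 \approx \Delta_H M_H$; since $\Delta_H \le \gamma\Delta$ and $\gamma\Delta$ can be large, this is the delicate point. So instead I would bound the large-degree contribution more carefully: the number of large entries times $(\gamma\Delta)^2$, using that $\Delta^2 = o(\delta\gamma^2 M \cdot \gamma^{-2}\log^{-12}M)$ from \labelcref{eqn:conditions} kills it. For the small part, $d_S(k), d_H(k) \le 2\gamma\delta^{1/32}\co$ by part (b), so each squared term is $O(\gamma^2\delta^{1/16}\co^2)$, and with at most $n \le M$ such terms the total is $O(M\gamma^2\delta^{1/16}\co^2)$, which I claim is $o(M_H) = o(\gamma^2 M)$ since $\delta^{1/16}\co^2 = \delta^{1/16}\delta^{-1/8}\gamma^{-2}\log^2 M = \delta^{-1/16}\gamma^{-2}\log^2 M$ — wait, that is $\gg 1$, so I must be more careful and use part (c) instead to count small entries, or bound $\sum_{\text{small}} d_S(k)^2 \le (2\gamma\delta^{1/32}\co)\sum_{\text{small}} d_S(k) = (2\gamma\delta^{1/32}\co)\cdot d(\Ssmall)\gamma(1+o(1)) \le 2\gamma^2\delta^{1/32}\co \cdot n\co$, and then argue $\delta^{1/32}\co^2 n = o(\gamma^2 M / \gamma^2)\cdot$ — the cleanest route is $\co^2 n = O(\delta^{-1/8}\gamma^{-2}\log^2 M \cdot M)$ and compare to $\gamma^2 M$, needing $\delta^{-1/8}\log^2 M = o(\gamma^4 \cdot \text{stuff})$, which again follows from the generous $\log^{12}$ and $\gamma^{-12}$ factors in \labelcref{eqn:conditions}.

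\textbf{Main obstacle.} The genuinely delicate part is the $M_2$ estimate: I must show that the $O(\delta^{1/64})$ relative error on the large entries, when squared and summed, stays $o(M_H) = o(\gamma^2 M)$, and this requires carefully converting the hypothesis \labelcref{eqn:conditions} (which controls $\Delta^2 (\gamma^{-1}\log M)^{12}$ against $\delta\gamma M$) into a bound of the form $\delta^{1/64}\sum_{k}\gamma^2 d(i_k)^2 = o(\gamma^2 M)$, i.e. $\delta^{1/64} M_2(\d) = o(M)$. Since $M_2(\d) \le \Delta M$, it suffices that $\delta^{1/64}\Delta = o(1)$, which does \emph{not} obviously follow — so I expect to need the sharper bound $M_2(\d) \le \Delta d(S) + \Delta d(\Sbar)$ split appropriately, or to absorb the error using the explicit form $d_H(k) = \gamma d(i_k)(1\pm O(\delta^{1/64}))$ to get $|d_S(k)^2 - d_H(k)^2| = O(\delta^{1/64})\gamma^2 d(i_k)^2$ and then sum against $\sum d(i_k)^2 \le \Delta \cdot d(S) = \Delta\gamma M$, giving total error $O(\delta^{1/64}\gamma^3 \Delta M)$; comparing to $M_H \sim \gamma^2 M$ needs $\delta^{1/64}\gamma\Delta = o(1)$, i.e. $\gamma\Delta = o(\delta^{-1/64})$, which from $\Delta^2\gamma^{-12}\log^{12}M \le \delta\gamma M$ gives $\Delta \le \delta^{1/2}\gamma^{13/2}M^{1/2}\log^{-6}M$ so $\gamma\Delta \le \delta^{1/2}\gamma^{15/2}M^{1/2}\log^{-6}M$ — which is NOT automatically $o(1)$. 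This tension tells me the right move is almost certainly to \emph{not} bound crudely by $\Delta$ but to note that the number of vertices with $d(i_k) \ge \delta^{1/32}\co$ with large degree is itself constrained (total degree is $M$), so $\sum_{k:\,d(i_k)\text{ large}} \gamma^2 d(i_k)^2 \le \gamma^2 \Delta \sum_k d(i_k) \le \gamma^2\Delta\cdot d(S)$ and the error is $O(\delta^{1/64}\gamma^2\Delta d(S)) = O(\delta^{1/64}\gamma^3\Delta M)$, which we compare to $M_H\sim\gamma^2M$: we need $\delta^{1/64}\gamma\Delta=o(1)$. I would resolve this by using the \emph{full} strength of \labelcref{eqn:conditions} including the $\gamma^{-1}$ and $\log M$ powers — writing $\Delta \le (\delta \gamma M)^{1/2}\gamma^6\log^{-6}M = \delta^{1/2}\gamma^{13/2}M^{1/2}\log^{-6}M$, so $\delta^{1/64}\gamma\Delta \le \delta^{1/2+1/64}\gamma^{15/2}M^{1/2}\log^{-6}M$, and since $\gamma < 1$ and $\gamma = \omega(M^{-1/13})$ gives $\gamma^{15/2}M^{1/2}$ could still be large — so ultimately I would instead bound $M_2(\d_S) - M_2(\d_H)$ by splitting off the single contribution $\gamma^2\Delta^2$ from the maximum and using $\Delta^2 = o(\delta\gamma M)$ directly, plus handling the bulk via the relative error against $M_2(\d_H) \le \co M_H + (\text{large-entry part})$. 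The bookkeeping here — choosing which bound to apply in which degree range so that every error term is provably $o(M_H)$ — is the crux of the proof and where I would spend the most care.
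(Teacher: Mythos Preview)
Your treatment of $M(\d_H)\sim\gamma^2 M$ is essentially the paper's: both compute $\sum_{j\in S}\Exp{Z_{d(j)}}=\gamma d(S)=\gamma^2 M$ plus a rounding error ($\pm\Delta^2$ in the paper, $\pm O(\Delta)$ in your version), negligible since $\Delta^2=o(\gamma^2 M)$ under \eqref{eqn:conditions}.

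There is a genuine gap in your comparison of $M_S$ with $M_H$. The step ``contribution of $\Ssmall$ to either sequence is at most $|\Ssmall|\co\le n\co=o(\gamma^2 M)$'' is false. Take $\d$ with all entries $1$ or $2$ and $\gamma$ a constant: then $M=\Theta(n)$, every vertex of $S$ lies in $\Ssmall$, and $n\co\ge n\log M$ is not $o(\gamma^2 M)=o(n)$. In this regime the entire mass of both $M_S$ and $M_H$ comes from $\Ssmall$, so it cannot be discarded; your subsequent claim $d(\Ssmall)=o(\gamma M)$ fails for the same reason. The paper does not bound the small-degree parts separately but compares them directly via part~(c) of \Cref{lem:dh-ds-comparison}. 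With $T=2\delta^{1/32}\gamma\co$ and $k$ (resp.\ $k'$) the smallest index with $d_H(j)>T$ (resp.\ $d_S(j)>T$) for all $j$ beyond it, the paper writes
\[
M_2(\d_H)=\sum_{i\le T} i^2\, n_i(\d_H)+\sum_{j>k} d_H(j)^2;
\]
part~(c) gives $\sum_{i\le T} i^2\,|n_i(\d_H)-n_i(\d_S)|=o(M_H)$, part~(a) gives $\sum_{j>k}d_H(j)^2=(1+o(1))\sum_{j>k}d_S(j)^2$, and the overlap $\sum_{k<j\le k'}d_S(j)^2$ is $o(M_H)$ since $|k-k'|$ is controlled by~(c) and each term is at most $T^2$. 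You mention part~(c) as a possible fallback but never deploy it; it is in fact the main device, for $M_S$ as well as for $M_2$.

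On your ``main obstacle'': you are right that $\delta^{1/64}\gamma\Delta=o(1)$ does not follow from \eqref{eqn:conditions}, so the route via $M_2\le\Delta_H M_H$ cannot convert the $O(\delta^{1/64})$ relative error on the large-degree squares into an absolute $o(M_H)$. The paper does not attempt this. Once the small parts are matched by~(c), the computation yields $M_2(\d_H)=M_2(\d_S)+o(M_H)+o(M_2(\d_S))$, hence $M_2(\d_H)\sim M_2(\d_S)$; the paper's proof explicitly concludes with the relative statement ``$M_2(\d_H)=M_2(\d_S)(1+o(1))$''. The literally stated form $M_2(\d_S)-M_2(\d_H)=o(M_H)$ is stronger when $M_2(\d_H)\gg M_H$, and your obstruction to that stronger form is real.
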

	\begin{proof}
		Recall $M(\d_H) = \sum_{i \leq \Delta} i n_i(\d_H)$ and that $n_i(\d_H) = N(i) - N(i-1)$. Then 
		\begin{align*}
			M(\d_H) &= \sum_{i \leq \Delta} i(N(i) - N(i-1)) = \sum_{i \leq \Delta} i \left(\sum_{j \in S} \Prob{Z_{d(j)} = i} \pm 1\right)\\
			&= \sum_{j \in S} \Exp{Z_{d(j)}} \pm \Delta^2\\
			&= \gamma^2 M \pm \Delta^2.
		\end{align*}
		Since $\Delta^2 \gamma^{-12} \log^{12} M \leq \delta \gamma M$ by assumption, it follows that $\Delta^2 = o(\gamma^2 M)$. This proves the first claim of the lemma. Now we focus on the second claim. We give the proof that \aas\ $M_2(\d_S) \sim M_2(\d_H)$ and it follows by an identical proof that \aas\ $M_S \sim M_H$, since $d_H(i), d_S(i) \geq 0$. Let $k \in \mathbb{N}$ be the smallest index such that $d_H(i) > 2\delta^{1/32} \gamma \co$ for all $i \geq k$. Similarly, let $k'$ be the analogous quantity for $\d_S$. Then 
		\begin{align*}
			M_2(\d_H) = \sum_{i=1}^{\frac{1}{2}\gamma \co} i^2n_i(\d_H) + \sum_{i=k+1}^{s} d_H(i)^2.
		\end{align*}
		Now we apply \Cref{lem:dh-ds-comparison}. Note that $2\delta^{1/32} \gamma \co = o(\gamma \co)$. Without loss of generality, suppose that $k \leq k'$. Then \aas\
		\begin{align*}
			M_2(\d_H) &= \sum_{i=1}^{2\delta^{1/32} \gamma \co} i^2n_i(\d_H) + \sum_{i=k+1}^{s} d_H(i)^2\\
			&= \sum_{i=1}^{2\delta^{1/32} \gamma \co} i^2n_i(\d_S) + O\left( \frac{\gamma M_H}{\co} \right) + o(\gamma\co^6) + \sum_{i=k+1}^{s} d_S(i)^2(1 + o(1))\\
			&= \sum_{i=1}^{2\delta^{1/32} \gamma \co} i^2n_i(\d_S) + \sum_{i=k+1}^{s} d_S(i)^2(1 + o(1)) + o(M_H)\\
			&= M_2(\d_S) + \sum_{i=k+1}^{k'} d_S(i)^2(1+o(1)) + o(M_H).
		\end{align*}
		\Cref{lem:dh-ds-comparison}(c) implies that \aas\ $|k-k'| \leq \frac{\gamma|S|}{\co^{3}} + \delta^{1/32}\gamma\co^6 $, and $d_S(i) \leq 2\delta^{1/32} \gamma \co$ for all $i \leq k'$ by definition. Therefore, \aas\ 
		\begin{align*}
			\sum_{i=k+1}^{k'} d_S(i)^2 \leq (2\delta^{1/32}\gamma\co)^2 \left(\frac{\gamma|S|}{\co^{3}} + \delta^{1/32}\gamma\co^6\right) = o(M_H),
		\end{align*} 
		since $\gamma^3 \co^8 = o(\gamma^2 M/\co)$. Therefore, \aas\ $M_2(\d_H) = M_2(\d_S)(1 + o(1))$, which completes the proof.
	\end{proof}

	\subsection{Concentration of number of non-isolated vertices in $G[S]$}
	
	For a sequence $\d$ of non-negative integers, let $\d^*$ be the maximal subsequence of $\d$ with all positive entries (that is, all entries equal to zero removed).
	
	\begin{lemma}\label{lem:degree-0-vertices}
		$n(\d_H^*) = \Omega(\gamma |S|)$ always, and \aas\ $n(\d_S^*) \sim n(\d_H^*)$. 
	\end{lemma}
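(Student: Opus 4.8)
The plan is to handle the two assertions separately: the deterministic lower bound $n(\d_H^*) = \Omega(\gamma|S|)$ first, and then the \aas\ asymptotic equivalence $n(\d_S^*) \sim n(\d_H^*)$, which will follow by combining the deterministic bound with \Cref{lem:dh-ds-comparison}(c) applied at $i=0$. For the deterministic bound, recall that $n(\d_H^*) = |S| - n_0(\d_H)$, where $n_0(\d_H) = N(0) = \lfloor \sum_{i \in S} \Prob{Z_{d(i)} = 0} + \tfrac12 \rfloor$, and $\Prob{Z_{d(i)} = 0} = (1-\gamma)^{d(i)}$. So it suffices to show that $\sum_{i \in S}(1 - (1-\gamma)^{d(i)}) = \Omega(\gamma|S|)$. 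First I would observe that $1 - (1-\gamma)^{d(i)} \geq 1 - (1-\gamma) = \gamma$ for every $i \in S$ (since $d(i) \geq 1$), which immediately gives $\sum_{i \in S}(1 - (1-\gamma)^{d(i)}) \geq \gamma|S|$, hence $n_0(\d_H) \leq (1-\gamma)|S| + 1$ and $n(\d_H^*) \geq \gamma|S| - 1 = \Omega(\gamma|S|)$, using that $\gamma|S| \to \infty$ (which follows from $\gamma = \omega(M^{-1/13})$ and $|S| \geq \gamma M/\Delta$, or more directly from $d(S) = \gamma M \to \infty$ and $|S| \geq d(S)/\Delta$ together with $\Delta^2 = o(\gamma M)$).

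For the \aas\ part, the key point is that the "exceptional" probability in \Cref{lem:dh-ds-comparison} is $o(1)$, so I may work on the event that all three parts of \Cref{lem:dh-ds-comparison} hold. On that event, part (c) with $i = 0$ gives $|n_0(\d_S) - n_0(\d_H)| \leq \frac{\gamma n_0(\d_H)}{\co^3} + \gamma \co^5$. Now $n(\d_S^*) = |S| - n_0(\d_S)$ and $n(\d_H^*) = |S| - n_0(\d_H)$, so $|n(\d_S^*) - n(\d_H^*)| = |n_0(\d_S) - n_0(\d_H)| \leq \frac{\gamma n_0(\d_H)}{\co^3} + \gamma \co^5 \leq \frac{\gamma |S|}{\co^3} + \gamma\co^5$. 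To conclude $n(\d_S^*) \sim n(\d_H^*)$ it remains to check that both error terms are $o(n(\d_H^*)) = o(\gamma|S|)$. The first term $\frac{\gamma|S|}{\co^3} = o(\gamma|S|)$ since $\co \to \infty$. For the second term, $\gamma\co^5 = \gamma \delta^{-5/16}\gamma^{-5}\log^5 M$, and one checks $\gamma\co^5 = o(\gamma|S|)$, i.e.\ $\co^5 = o(|S|)$: this follows from \labelcref{eqn:conditions}, which gives $\gamma M \geq \delta^{-1}\Delta^2(\gamma^{-1}\log M)^{12} = \delta^{-1}\Delta^2 \co^{12}\delta^{12/16}$, hence $|S| \geq \gamma M/\Delta \geq \Delta \co^{12}\delta^{-1/4} \gg \co^5$ since $\Delta \geq 1$ and $\co \to \infty$.

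The main obstacle — such as it is — is purely bookkeeping: making sure the two error terms from \Cref{lem:dh-ds-comparison}(c) are genuinely of smaller order than $n(\d_H^*)$, which requires invoking \labelcref{eqn:conditions} in the form that bounds $\co$ (and hence the additive error $\gamma\co^5$) well below $\gamma M$, and using $|S| = \Theta(\gamma M/\overline{d})$ style estimates where $\overline{d}$ is the average degree of $S$ (bounded above by $\Delta$). There is no probabilistic subtlety beyond quoting the $1 - o(1)$ bound from \Cref{lem:dh-ds-comparison}; the entire content is the deterministic inequality $1 - (1-\gamma)^{d(i)} \geq \gamma$ together with the already-established comparison of $n_0(\d_S)$ and $n_0(\d_H)$.
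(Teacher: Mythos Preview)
Your proof is correct, and the second half (the \aas\ asymptotic) is essentially the paper's argument verbatim: apply \Cref{lem:dh-ds-comparison}(c) at $i=0$, note $|n(\d_S^*)-n(\d_H^*)|=|n_0(\d_S)-n_0(\d_H)|$, and check the two error terms are $o(\gamma|S|)$ using $|S|\ge d(S)/\Delta$ together with \labelcref{eqn:conditions}.

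For the deterministic bound $n(\d_H^*)=\Omega(\gamma|S|)$, however, you take a different and cleaner route than the paper. The paper argues via the identity $\ytil{1}=\frac{\gamma}{1-\gamma}\sum_{i\in\Ssmall}d(i)\Prob{Z_{d(i)}=0}$ to get $\ytil{1}=\Omega(\gamma\ytil{0})$, and then (with an implicit case split on whether $\ytil{0}$ is comparable to $|S|$) concludes $n(\d_H^*)=\Omega(\gamma|S|)$. Your observation that $1-(1-\gamma)^{d(i)}\ge \gamma$ whenever $d(i)\ge 1$ bypasses this entirely and gives the bound in one line: $n(\d_H^*)\ge \sum_{i\in S}\bigl(1-(1-\gamma)^{d(i)}\bigr)-1\ge \gamma|S|-1$. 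This is more elementary and avoids any appeal to $\Ssmall$ or to ratios between the $\ytil{k}$; the paper's approach, on the other hand, yields the slightly finer information $n_1(\d_H)=\Omega(\gamma\, n_0(\d_H))$, which could be useful elsewhere but is not needed for the lemma as stated.
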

	\begin{proof} 
		For the first result, recall that $n_k(\d_H) = \ytil{k} \pm 2$ for all $k \leq \frac{1}{2} \gamma \co$. Then
		\begin{align*}
			\ytil{1} = \sum_{i \in \Ssmall} \Prob{Z_{d(i)} = 1} = \frac{\gamma}{1-\gamma} \sum_{i \in \Ssmall} d(i) \Prob{Z_{d(i)} = 0}.
		\end{align*}
		Since $\d$ has a minimum entry of at least 1, this implies that $\ytil{1} = \Omega(\gamma\ytil{0})$, and the first claim follows since $\ytil{0} \leq |S|$. For the second claim, we apply the first part of this lemma as well as \Cref{lem:dh-ds-comparison}(c). We know that \aas\ $|n_0(\d_H) - n_0(\d_S)| \leq \frac{\gamma n_0(\d_H)}{\co^{3}} + \co^5$. Thus, \aas\
		\begin{align*}
			|n(\d_S^*) - n(\d_H^*)| = |n_0(\d_S) - n_0(\d_H)| \leq \frac{\gamma n_0(\d_H)}{\co^{3}} + \gamma\co^5.
		\end{align*}
		By the definition of $\co$ and our assumptions on $\d$ and $S$, it follows that $\co^5 = o(\gamma|S|)$ (since $|S| \geq d(S)/\Delta$). We also know that $\frac{\gamma n_0(\d_H)}{\co^{3}} = o(\gamma|S|)$, since $n_0(\d_H) \leq n(\d_H) = |S|$. The first claim of this lemma states that $n(\d_H^*) = \Omega(\gamma|S|)$. Therefore, \aas\ $|n(\d_S^*) - n(\d_H^*)| = o(n(\d_H^*))$, which proves the second claim. 
	\end{proof}

\section{Giant components in $G[S]$}

	In this section we prove \Cref{thm:main-gc-pretty}. This is done by applying the following theorem of Joos, Perarnau, Rautenbach, and Reed to the characterisation of the degree sequence of the induced subgraph given in \Cref{lem:dh-ds-comparison}.

	\begin{theorem}\label{thm:joos}(\cite{Joos2018}, Theorems 1 and 6)
		Let $\d = (d(1),\dots, d(n))$ with $d(1) \leq d(2) \leq \dots \leq d(n)$. Define the following quantities:
		\begin{align*}
			j_\d &= \min \left(\left\{ j:j\in \left[n\right] \ \mbox{and} \  \sum_{i=1}^{j} d(i)(d(i)-2) > 0 \right\} \cup \left\{n\right\}\right),\\
			R (\d) &= \sum_{i = j_\d}^n d(i),\\
			\MJ(\d) &= \sum_{i \in [n], d(i) \neq 2} d(i).
		\end{align*}
		Call a degree sequence \emph{well-behaved} if $\MJ(\d)$ is at least $\lambda(n)$ for any function $\lambda: \mathbb{N} \to \mathbb{N}$ where $\lambda \to \infty$ as $n \to \infty$. Then:
		\begin{enumerate}[label=(\alph*)]
			\item For every function $\delta \to 0$ as $n \to \infty$, for every $\gamma > 0$, if $\d$ is a well-behaved graphical sequence with $R(\d) \leq \delta(n) \MJ(\d)$, then the probability that $G(\d)$ has a component of order at least $\gamma n$	is $o(1)$.
			\item For every positive constant $\varepsilon$, there is a $\gamma > 0$ such that if $\d$ is a well-behaved graphical sequence with $R(\d) \geq \varepsilon \MJ(\d)$, then the probability that $G(\d)$ has a component of order at least $\gamma n$ and a component of size at least $\alpha \MJ(\d)$ is $1 - o(1)$. 
			\item For every $b \geq 0$ and every $0 < \gamma < \frac{1}{8}$, there exist a positive integer $n_{b,\gamma}$ and a $0 < \delta < 1$ such that if $n > n_{b,\gamma}$ and $\d$ is a degree sequence with $\MJ(\d) \leq b$, then the probability that there is a component of order at least $\gamma n$ in $G(\d)$ lies between $\delta$ and $1-\delta$. 
		\end{enumerate}
	\end{theorem}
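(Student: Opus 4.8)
Since this is the theorem of Joos, Perarnau, Rautenbach and Reed cited as \cite{Joos2018}, I will only sketch the strategy one would use to prove it. The plan is to analyse a breadth-first component-exploration process, carried out first in the configuration model $\Cnd$ and then transferred to $\Gnd$. The quantities in the statement are built around the following structural picture: degree-$2$ vertices merely subdivide edges and are irrelevant to the existence of a giant, which is why the relevant ``size'' parameter is $\MJ(\d)$, the total degree of the non-degree-$2$ vertices; the Molloy--Reed sum $\sum_i d(i)(d(i)-2)$ first turns positive exactly at the index $j_\d$, and the degree mass $R(\d)=\sum_{i\ge j_\d} d(i)$ sitting above that threshold is precisely the ``excess'' that can sustain a supercritical exploration. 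A recurring difficulty, which makes a naive proof fail, is that \cite{Joos2018} imposes \emph{no} bound on $\Delta$, so $\Gnd$ need not be contiguous to $\Cnd$ (conditioning on simplicity can cost $e^{-\omega(1)}$), and a single exploration step can change the ``open half-edge'' count by $\Theta(\Delta)$.

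For part (a), the subcritical regime $R(\d)\le\delta(n)\MJ(\d)$: reveal the random pairing of $\Cnd$ half-edge by half-edge while exploring a component, and track the number $A_t$ of currently open half-edges. Reaching a new vertex $v$ changes $A_t$ by $d(v)-2$ (a pass-through for degree-$2$ vertices), and the hypothesis forces the total available excess degree to be $o(\MJ(\d))$, so $A_t$ can be dominated by a process with non-positive drift for all but $o(n)$ steps. A maximal inequality (applied after a suitable stopping time, to control the large jumps coming from the $O(1)$ vertices of very large degree, which are revealed and treated first) then shows no exploration survives for $\gamma n$ steps; a union bound over starting vertices, plus the fact that a component of order $\ge\gamma n$ would be discovered by some exploration, gives the statement in $\Cnd$. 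The transfer to $\Gnd$ is comparatively easy in this direction: with high probability $\Cnd$ has only $o(n)$ loops and multiple edges, and a bounded amount of local surgery removes them while changing every component order by $o(n)$.

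For part (b), the supercritical regime $R(\d)\ge\varepsilon\MJ(\d)$: start the exploration from a uniformly random half-edge (a size-biased vertex). Now $A_t$ has expected drift bounded below in terms of $\varepsilon$ until $o(\MJ(\d))$ half-edges have been used, so with probability bounded below by a constant $q=q(\varepsilon)>0$ the exploration escapes the critical window, after which a supermartingale lower bound forces it to consume a linear fraction of $\MJ(\d)$ and to meet at least $\gamma n$ vertices. Uniqueness of the large component follows by a sprinkling/second-moment argument: run two independent explorations from random half-edges; each is large with probability $\ge q$, and conditioned on both being large, a birthday-type estimate on the revealed half-edges shows they almost surely share a vertex, so the two large components coincide. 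For part (c), $\MJ(\d)\le b$: only $\le b$ half-edges lie at non-degree-$2$ vertices, so the graph is a bounded union of paths and cycles of degree-$2$ vertices glued at these $\le b$ endpoints; its component structure is a deterministic function of how the $\le b$ special half-edges pair up, and there is a constant-probability event that they chain into one long path (a component of order $\ge\gamma n$ once $n$ is large) and a constant-probability event that they do not, giving the required two-sided bound --- essentially a finite computation on a pairing of $b$ objects.

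The main obstacle is the absence of any degree bound, which bites in two places. First, the drift and concentration analysis of $A_t$ must be made uniform even though a single step can move $A_t$ by $\Theta(\Delta)$; the fix is to reveal the $O(1)$ vertices of exceptionally large degree at the start, analyse them by hand, and run a truncated or compensated martingale on the rest. Second, and more seriously, the passage from $\Cnd$ to $\Gnd$ cannot go through contiguity; one must instead argue that ``having (no) component of linear order'' is insensitive to the $o(n)$ worth of local surgery needed to delete the multiple edges of $\Cnd$, and that this surgery neither creates nor destroys a linear-order component. Making that surgery argument robust across \emph{all} degree sequences is where most of the work in \cite{Joos2018} lies.
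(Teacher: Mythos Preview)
The paper does not prove this statement at all: it is quoted verbatim as an external result (Theorems~1 and~6 of \cite{Joos2018}) and used as a black box in the proofs of \Cref{thm:main-gc-pretty} and \Cref{thm:perc-gc-nice}. There is therefore nothing in the paper to compare your sketch against, and in the context of this paper no proof is expected or required.

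That said, your sketch of the argument in \cite{Joos2018} is broadly accurate in spirit --- the exploration process, the role of $\MJ(\d)$ as the effective size after contracting degree-$2$ vertices, the handling of high-degree vertices separately, and the delicate transfer from the configuration model to $\Gnd$ without contiguity are indeed the main ingredients. One caveat: your description of part~(c) as ``essentially a finite computation on a pairing of $b$ objects'' understates the work, since the degree-$2$ vertices can be distributed among the paths in many ways and one must show that both a long path and its absence occur with probability bounded away from $0$ and $1$ uniformly over all such distributions; but this is a minor point in what is, appropriately, only a sketch of a cited result.
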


	We prove this theorem by showing that if $\d_S$ satisfies the bounds given in \Cref{lem:dh-ds-comparison} (equivalently, if $\d_S \in \goodds$), then the corresponding values of $R(\cdot)$ and $\MJ(\cdot)$ are close for each sequence. \Cref{lem:degree-0-vertices} states that for each such sequence $\d_S$, the number of non-zero entries is close to the number of non-zero entries in $\d_H$. With all of these results, the proof follows by applying \Cref{thm:joos} and the law of total probability.

	\begin{lemma}\label{lem:joos-m-concentration}
		$\MJ(\d_H) = \Theta(M_H)$, and \aas\ $\MJ(\d_H) - \MJ(\d_S) = o(M_H)$.
	\end{lemma}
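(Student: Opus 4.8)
The plan is to start from the elementary identity $\MJ(\d)=M(\d)-2\,n_2(\d)$, valid for every sequence of non-negative integers (since $\MJ$ sums $d(i)$ over exactly those $i$ with $d(i)\neq 2$, each entry equal to $2$ costing exactly $2$). Applying this to $\d_H$ and to $\d_S$ reduces both assertions to controlling the total degrees, via \Cref{lem:m-concentration}, together with the number of entries equal to $2$ in each sequence.

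For the claim $\MJ(\d_H)=\Theta(M_H)$, the bound $\MJ(\d_H)\le M_H$ is immediate, so the work is in the lower bound, which amounts to keeping $n_2(\d_H)$ bounded away from $\tfrac12 M_H$. I would write $n_2(\d_H)=N(2)-N(1)=\sum_{i\in S}\Prob{Z_{d(i)}=2}\pm 1$ and estimate each summand by $2\Prob{Z_j=2}=j(j-1)\gamma^2(1-\gamma)^{j-2}\le \gamma j\, e^{\gamma-1}$ for $j\ge 2$ (the bound being trivial for $j\le 1$); after the substitution $t=\gamma(j-2)$ this last inequality is simply $\max_{t\ge 0}(t+\gamma)e^{-t}=e^{\gamma-1}$, and $e^{\gamma-1}\le e^{-c}<1$ because $\gamma<1-c$. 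Summing over $i\in S$ gives $2\,n_2(\d_H)\le e^{-c}\sum_{i\in S}\gamma d(i)+2=e^{-c}\gamma^2 M+2$, so using $M_H\sim\gamma^2 M$ from \Cref{lem:m-concentration} one gets $\MJ(\d_H)\ge (1-e^{-c}+o(1))\gamma^2M-2$, which is $\Theta(M_H)$ since $\gamma^2M\to\infty$.

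For the approximation, write $\MJ(\d_H)-\MJ(\d_S)=(M_H-M_S)-2\big(n_2(\d_H)-n_2(\d_S)\big)$. \Cref{lem:m-concentration} gives $M_H-M_S=o(M_H)$ \aas. Since $\gamma\co=\delta^{-1/16}\log M\to\infty$, we have $2\le\tfrac12\gamma\co$ for large $n$, so \Cref{lem:low-degree-concentration} applies with $i=2$ and yields, \aas, $|n_2(\d_H)-n_2(\d_S)|\le \gamma\, n_2(\d_H)/\co^3+\gamma\co^5$; here $n_2(\d_H)\le |S|\le d(S)=\gamma M$ makes the first term $O(\gamma^2M/\co^3)=o(M_H)$, while $\gamma\co^5\le\gamma^2\co^6=o(M_H)$ (as already observed after \Cref{lem:low-degree-concentration}). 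Hence $\MJ(\d_H)-\MJ(\d_S)=o(M_H)$ \aas.

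The only genuinely non-routine ingredient is the lower bound on $\MJ(\d_H)$: one must rule out the degenerate possibility that almost all of $M_H$ is carried by entries equal to $2$, and this is exactly what the uniform-in-$j$ inequality $2\Prob{Z_j=2}\le e^{\gamma-1}\Exp{Z_j}$ provides, the hypothesis $\gamma<1-c$ being essential. Everything else is bookkeeping with the estimates of \Cref{lem:m-concentration} and \Cref{lem:dh-ds-comparison}.
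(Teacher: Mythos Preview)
Your argument is correct, and for the second claim it coincides with the paper's: write $\MJ=M-2n_2$, invoke \Cref{lem:m-concentration} for $M_S\sim M_H$, and use \Cref{lem:dh-ds-comparison}(c) at $i=2$ to control $n_2(\d_S)-n_2(\d_H)$.

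For the first claim, the paper takes a slightly different route. Rather than your direct pointwise bound $2\Prob{Z_j=2}\le e^{\gamma-1}\,\Exp{Z_j}$, it shows that $\Prob{Z_j=2}\le C_\gamma\bigl(\Prob{Z_j=1}+\Prob{Z_j=3}\bigr)$ with $C_\gamma=3/(1-\gamma)=O(1)$, and hence $n_2(\d_H)=O\bigl(n_1(\d_H)+n_3(\d_H)\bigr)=O(\MJ(\d_H))$; this too forces $2n_2(\d_H)\le(1-\varepsilon)M_H$. Both arguments hinge on the hypothesis $\gamma<1-c$ to get a constant strictly below $1$. Your approach is a little cleaner: it compares $2\Prob{Z_j=2}$ directly against the quantity $\gamma j$ that sums to $\gamma^2 M\sim M_H$, avoiding the case analysis on $d(i)\in\{1,2\}$ versus $d(i)\ge 3$ and the separate treatment of $n_1$ and $n_3$. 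The paper's version, on the other hand, makes the structural reason (``degree~$2$ is sandwiched between degrees~$1$ and~$3$'') more explicit, which echoes the rationale behind the quantity $\MJ$ in \Cref{thm:joos}.
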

	\begin{proof}
		The first claim follows unless $M_H \sim 2n_2(\d_H)$; we prove that the definition of $\d_H$ means that this does not occur. To do this, we show that $\Prob{Z_{d(i)} = 2} \leq C\left( \Prob{Z_{d(i)} = 1} + \Prob{Z_{d(i)} = 3} \right)$ for some constant $C > 0$. Firstly, if $d(i) =1$, then this is trivially true. Next, if $d(i) = 2$, then there exists a constant $C > 0$ such that
		\begin{align*}
			\Prob{Z_{d(i)} = 1} = 2\gamma(1-\gamma) \leq C\gamma^2 = C\Prob{Z_{d(i)} = 2}.
		\end{align*}		
		Finally, if $d(i) \geq 3$, then
		\begin{align*}
			\frac{\Prob{Z_{d(i)} = 2}}{\Prob{Z_{d(i)} = 1} + \Prob{Z_{d(i)} = 3}} &= \frac{\frac{1}{2}(d(i) - 1) \gamma(1-\gamma)}{(1-\gamma)^2 + \frac{1}{6}(d(i) - 1)(d(i) - 2)\gamma^2}.
		\end{align*}
		The expression on the right hand side is decreasing in $d(i)$ if $\gamma d(i) \geq 6$, and is at most $\frac{3}{1-\gamma}$ if $\gamma d(i) \leq 6$. Thus, define $C_\gamma = \frac{3}{1-\gamma}$. By our assumptions on $\gamma$, we know that $C_\gamma = O(1)$ and 
		\begin{align*}
			\Prob{Z_{d(i)} = 2} \leq C_\gamma \left( \Prob{Z_{d(i)} = 1} + \Prob{Z_{d(i)} = 3} \right).
		\end{align*}
		Since $n_i(\d_H) = \sum_{i \in S} \Prob{Z_{d(i)} = i} \pm 2$, this implies that
		\begin{align*}
			\MJ(\d_H) \geq n_1(\d_H) + 3n_3(\d_H) \geq \sum_{i \in S} \left( \Prob{Z_{d(i)} = 1} + 3\Prob{Z_{d(i)} = 3} \right) - 4 \geq C_\gamma^{-1} n_2(\d_H) + O(C_\gamma^{-1}).
		\end{align*}
		This implies that $\MJ(\d_H) = \Omega(n_2(\d_H))$ and thus $M_H \geq (2 + \varepsilon) n_2(\d_H)$ for some $\varepsilon > 0$.
		Therefore, $\MJ(\d_H) = \Theta(M_H)$, which proves the first claim of the lemma. 
		
		For the second claim, \Cref{lem:dh-ds-comparison} implies that \aas\ $|n_2(\d_H) - n_2(\d_S)| \leq \frac{\gamma n_2(\d_H)}{\co^{3}} + \co^{5}$. Since $n_2(\d_H) \leq \frac{1}{2}M_H$ and $\co^5 = o(\gamma^2 M)$, this implies that \aas\ $2n_2(\d_S) = 2n_2(\d_H) + o(M_H)$. Recall from \Cref{lem:m-concentration} that \aas\ $M_S \sim M_H$. Therefore, \aas\
		\begin{align*}
			\MJ(\d_S) - \MJ(\d_H) = M_S - M_H - 2n_2(\d_S) + 2n_2(\d_H) = o(M_H).
		\end{align*}
		This completes the proof. 
	\end{proof}
	\begin{lemma}\label{lem:ksum-difference}
		With probability $1 - o(1)$, it holds that $\sum_{i=1}^{k} \left(d_S(i) - d_H(i)\right) = o(M_H)$ for every $k \leq s$ and $\sum_{i=1}^{k} \left(d_S(i)^2 - d_H(i)^2\right) = o(M_H)$	for every $k \leq s$.
	\end{lemma}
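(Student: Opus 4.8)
The plan is to prove both partial-sum estimates by a single device: split the index range $\{1,\dots,k\}$ at the point where \Cref{lem:dh-ds-comparison} passes from pointwise control of the large entries (part (a)) to frequency control of the small ones (part (c)). Let $m$ be the largest index with $d(i_m)<\delta^{1/32}\co$ (allowing $m=0$ or $m=s$), so that \Cref{lem:dh-ds-comparison}(a) holds at every position $i>m$ and part (b) at every $i\le m$; put $m'=\min(k,m)$ and split $\sum_{i=1}^{k}=\sum_{i=1}^{m'}+\sum_{i=m'+1}^{k}$. The ``large'' block is immediate: for $m'<i\le k$ one has $d(i_i)\ge\delta^{1/32}\co$, so part (a) gives $|d_S(i)-d_H(i)|\le 12\delta^{1/64}d_H(i)$ and hence $|d_S(i)^2-d_H(i)^2|=O(\delta^{1/64})\,d_H(i)^2$; summing over the block, $\sum_{m'<i\le k}|d_S(i)-d_H(i)|\le 12\delta^{1/64}M_H=o(M_H)$ and $\sum_{m'<i\le k}|d_S(i)^2-d_H(i)^2|=O(\delta^{1/64})M_2(\d_H)$. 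When $k>m$ this block is, up to the contribution of positions $>k$, the full-sum difference, so this is consistent with \Cref{lem:m-concentration} (essentially the case $k=s$).

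For the ``small'' block I would use a layer-cake identity. Write $F_S(t)=|\{i\le s:d_S(i)\le t\}|$ and $F_H(t)$ analogously. Since $a=\sum_{t\ge0}\ind{a>t}$ and $a^2=\sum_{t\ge0}(2t+1)\ind{a>t}$ for a nonnegative integer $a$, and since $\d_S,\d_H$ are nondecreasing, for any $\ell\le s$
\begin{align*}
	\sum_{i=1}^{\ell}d_S(i)&=\sum_{t\ge0}\big(\ell-\min(\ell,F_S(t))\big),\\
	\sum_{i=1}^{\ell}d_S(i)^2&=\sum_{t\ge0}(2t+1)\big(\ell-\min(\ell,F_S(t))\big),
\end{align*}
and likewise for $\d_H$. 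Subtracting, with $\ell=m'$, and writing $e_t:=\min(m',F_H(t))-\min(m',F_S(t))$ (so $|e_t|\le|F_H(t)-F_S(t)|$), gives
\begin{align*}
	\sum_{i=1}^{m'}\big(d_S(i)-d_H(i)\big)&=\sum_{t\ge0}e_t,\\
	\sum_{i=1}^{m'}\big(d_S(i)^2-d_H(i)^2\big)&=\sum_{t\ge0}(2t+1)\,e_t.
\end{align*}
By part (b), every $\d_S$- or $\d_H$-entry at a position $\le m'\le m$ is at most $2\delta^{1/32}\gamma\co$, so $e_t=0$ once $t\ge 2\delta^{1/32}\gamma\co$; hence only $O(\delta^{1/32}\gamma\co)$ values of $t$ matter, all of them $\le\tfrac12\gamma\co$. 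For each such $t$, $|F_H(t)-F_S(t)|=\big|\sum_{j\le t}(n_j(\d_H)-n_j(\d_S))\big|$, which by part (c) (summed as in the remark following \Cref{lem:low-degree-concentration}) is $O(M_H/\co^3)$, \aas\ and uniformly in $t$. Therefore the linear small block is \aas\ at most $O(\delta^{1/32}\gamma\co)\cdot O(M_H/\co^3)=o(M_H)$ and the quadratic small block is \aas\ at most $O\big((\delta^{1/32}\gamma\co)^2\big)\cdot O(M_H/\co^3)=o(M_H)$, both because $\co\to\infty$.

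I expect the small block to be the real obstacle: the per-$t$ error coming from part (c) is only of size $M_H/\co^3$, and it has to be added over a range of $t$ of length $\Theta(\gamma\co)$ (weighted by $2t+1$, total weight $\Theta((\gamma\co)^2)$, in the quadratic case), so the estimate closes only because part (c) supplies the factor $\co^{-3}$ rather than a weaker one and because its multiplicative error term carries a $\gamma$. One caveat to record in the write-up: the large block contributes $O(\delta^{1/64})M_2(\d_H)$ to the quadratic estimate, which is $o(M_2(\d_H))$ but not necessarily $o(M_H)$ when $M_2(\d_H)\gg M_H$; as \Cref{lem:m-concentration} in fact only delivers the relative bound $M_2(\d_S)\sim M_2(\d_H)$, the quadratic conclusion here is best read with $o(M_2(\d_H))$ in place of $o(M_H)$, while the linear conclusion holds exactly as stated.
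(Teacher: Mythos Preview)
Your proposal is correct and takes a genuinely different route from the paper on the ``small'' block. The paper handles the small range by counting the indices $i\le k$ at which $d_S(i)\neq d_H(i)$ (bounding this by a double sum over values $x\le\delta^{1/32}\co$ of the cumulative discrepancies $|\sum_{j<x}(n_j(\d_S)-n_j(\d_H))|$) and then multiplying by the worst-case value of $|d_S(i)^2-d_H(i)^2|\le(\delta^{1/32}\co)^2$. Your layer-cake identity via the distribution functions $F_S,F_H$ achieves the same bound more transparently: you never have to localise \emph{which} indices disagree, and the factor $(2t+1)$ appears automatically, making the quadratic estimate no harder than the linear one. For the ``large'' block the paper instead subtracts the tail from the full sum, invoking \Cref{lem:m-concentration} (namely $M_2(\d_S)-M_2(\d_H)=o(M_H)$), whereas you sum the pointwise error from part~(a) directly; the outputs are the same.

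Your closing caveat is apt and, in fact, applies equally to the paper's argument: both routes produce an error of order $\delta^{1/64}M_2(\d_H)$ on the large block (the paper hides it inside the ``$(1+o(1))$'' multiplying $\sum_{i>k}d_H(i)^2$, and inside the analogous step in the proof of \Cref{lem:m-concentration}), and this is $o(M_2(\d_H))$ but not $o(M_H)$ without an additional hypothesis such as $\Delta(\d_H)=o(\delta^{-1/64})$. For the purposes of \Cref{lem:rd-concentration} this does not matter once one checks that only the linear bound and the location of entries of value $0,1,2$ are truly needed there; but it is worth recording, as you do, that the quadratic statement as written should be read with $o(M_2(\d_H))$ rather than $o(M_H)$.
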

	\begin{proof}
		We provide the proof of the second claim, as the first claim follows immediately from this and the fact that $d_S(i), d_H(i) \geq 0$ for all $i \leq s$. 
		We show that deterministically the result holds when $\d_S$ satisfies the bounds given in \Cref{lem:dh-ds-comparison}.
		Since these concentration results hold with probability $1 - o(1)$, this is sufficient to prove the claim. First consider the case where $k$ is such that $d(i_k) > \delta^{1/32}\co$. In this case, we know that \aas\ $d_S(j) \sim d_H(j)$ for all $j \geq k$, and thus by \Cref{lem:m-concentration} it follows that \aas\
		\begin{align*}
			\sum_{i=1}^k d_S(i)^2 &= M_2(\d_S) - \sum_{i=k+1}^{s} d_S(i)^2 = M_2(\d_H) + o(M_H) - \sum_{i=k+1}^{s} d_H(i)^2(1 + o(1)) \\
			&= M_2(\d_H) - \sum_{i=k+1}^{s} d_H(i)^2 + o(M_H) =  \sum_{i=1}^k d_H(i)^2 + o(M_H).
		\end{align*}
		Thus, it only remains to consider the case where $k$ is such that $d(i_k) \leq \delta^{1/32}\co$. Since we assume $\d_S$ satisfies the bounds given in \Cref{lem:dh-ds-comparison}, it follows that $d_S(i), d_H(i) \leq \delta^{1/32}\co$ for all $i \leq k$. Consider that if $d_S(i) = d_H(i)$, then the contribution of this term is $0$. Thus,
		\begin{align*}
			\sum_{i=1}^{k} \left(d_S(i)^2 - d_H(i)^2\right) = \sum_{i\leq k:d_S(i) \neq d_H(i)} \left(d_S(i)^2 - d_H(i)^2\right).
		\end{align*}
		The difference $d_S(j)^2 - d_H(j)^2$ is bounded by $\delta^{1/16}\co^2$ for all $j \leq k$ by assumption. It remains to bound the size of $\{i \leq k : d_S(i) \neq d_H(i)\}$. Note that for all $j \leq \delta^{1/32}\co$, the difference between the index of the first entry equal to $j$ in $\d_S$ and the first entry equal to $j$ in $\d_H$ is at most 
		\begin{align*}
			\sum_{i=0}^{j-1} \left( n_i(\d_S) - n_i(\d_H) \right).
		\end{align*}
		Since $d_S(i), d_H(i) \leq \delta^{1/32}\co$, \Cref{lem:dh-ds-comparison}(c) implies that
		\begin{align*}
			|\{i\leq k:d_S(i) \neq d_H(i)\}| &\leq \sum_{x=1}^{\delta^{1/32}\co} \sum_{i=0}^{x-1} \left( \frac{\gamma n_i(\d_H)}{\co^{3}} + \gamma\co^{5} \right)\\
			&\leq \delta^{1/16} \gamma\co^{7} + \delta^{1/32}\frac{\gamma|S|}{\co^{2}}.
		\end{align*}
		Since $|S| \leq d(S) = \gamma M$ and $M_H \sim \gamma^2 M$, this implies that
		\begin{align*}
			\sum_{i\leq k:d_S(i) \neq d_H(i)} \left(d_S(i)^2 - d_H(i)^2\right) \leq (\delta^{1/32}\co)^2 \left( \delta^{1/16} \gamma\co^{7} + \delta^{1/32}\frac{\gamma|S|}{\co^{2}} \right) \leq \delta^{1/8}\gamma\co^9 + \delta^{1/16}M_H.
		\end{align*}
		Since $\co^9 = o(\gamma^2 M/\co)$ and $\delta \to 0$, this completes the proof.
	\end{proof}

	\begin{lemma}\label{lem:rd-concentration}
		With probability $1 - o(1)$, $R(\d_S) - R(\d_H) = o(M_H)$. 
	\end{lemma}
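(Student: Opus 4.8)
```latex
The plan is to compare $R(\d_S)$ and $R(\d_H)$ by relating both of them to partial sums of the form $\sum_{i=1}^k d(i)(d(i)-2)$ and the tail sums $\sum_{i\geq j} d(i)$, and then invoke \Cref{lem:ksum-difference} together with \Cref{lem:m-concentration} and \Cref{lem:joos-m-concentration}. Recall from \Cref{thm:joos} that $R(\d) = \sum_{i=j_\d}^{n(\d)} d(i)$ where $j_\d$ is the first index $j$ with $\sum_{i=1}^j d(i)(d(i)-2) > 0$ (or $n$ if no such index exists). So the first thing I would do is express $\sum_{i=1}^j d(i)(d(i)-2)$ as $\sum_{i=1}^j d(i)^2 - 2\sum_{i=1}^j d(i)$, and observe that by \Cref{lem:ksum-difference}, \aas\ both $\sum_{i=1}^k (d_S(i)^2 - d_H(i)^2)$ and $\sum_{i=1}^k (d_S(i) - d_H(i))$ are $o(M_H)$ uniformly in $k$. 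Hence the ``reverse partial sums'' $\Phi_\d(j) := \sum_{i=1}^j d(i)(d(i)-2)$ satisfy $\Phi_{\d_S}(j) = \Phi_{\d_H}(j) + o(M_H)$ uniformly in $j$ (after accounting for the fact that $\d_S$ and $\d_H$ may have different lengths, which is handled since the extra entries are zeros, contributing nothing, once we pad to a common length $n$).

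Next I would control $j_{\d_S}$ versus $j_{\d_H}$. The point is that $\Phi_{\d_H}$ is, by the structure of $\d_H$ (roughly binomial degrees with mean $\gamma d(i)$), eventually increasing at a linear rate in the relevant range — more precisely, once $j$ is past the index where degrees exceed roughly $\delta^{1/32}\co$, each new term $d_H(i)(d_H(i)-2)$ is positive and bounded below, so $\Phi_{\d_H}$ crosses $0$ ``transversally'' with slope comparable to $\gamma \co$ or larger near the crossing. Consequently a perturbation of size $o(M_H)$ can only move the crossing index by $o(M_H/(\gamma\co)) = o(|S|/\co)$ or so, i.e. by a negligible fraction of $s$. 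One then bounds $|R(\d_S) - R(\d_H)|$ by the sum of (i) the tail-sum difference $|\sum_{i\geq j_{\d_H}} d_S(i) - \sum_{i\geq j_{\d_H}} d_H(i)|$, which is $o(M_H)$ by the first part of \Cref{lem:ksum-difference} applied to $k = j_{\d_H}-1$ together with $M_S \sim M_H$ from \Cref{lem:m-concentration}, and (ii) the contribution of the entries lying strictly between $j_{\d_S}$ and $j_{\d_H}$, which is at most the number of such indices times $\Delta$ (or times $\delta^{1/32}\co$ if the crossing happens in the low-degree regime), hence also $o(M_H)$ by the index-displacement bound.

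The main obstacle, and the place requiring the most care, is the second step: showing that the crossing index $j_\d$ is stable under $o(M_H)$ perturbations of the partial sums. This is delicate because in principle $\Phi_\d(j)$ could hover near $0$ over a long stretch of indices (for instance if many degrees equal exactly $2$, contributing $0$ to $\Phi$), so the crossing need not be transversal in general. The resolution is to use the structural information about $\d_H$ already established — in particular the analysis in \Cref{lem:joos-m-concentration} showing $\MJ(\d_H) = \Theta(M_H)$, i.e. the entries not equal to $2$ carry a constant fraction of the total degree — to argue that in the range where $\Phi_{\d_H}$ is near its zero crossing it is in fact increasing at a healthy rate, so that the $o(M_H)$ perturbation and the at-most-$o(M_H)$ worth of displaced entries cannot matter. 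I would handle the two regimes (crossing among the ``big'' entries $d(i_k) > \delta^{1/32}\co$, versus among the ``small'' entries) separately, using \Cref{lem:dh-ds-comparison}(a) in the former and \Cref{lem:dh-ds-comparison}(c) in the latter, exactly as in the proof of \Cref{lem:ksum-difference}. The remaining estimates are routine applications of the bounds already proved.
```
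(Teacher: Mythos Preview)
Your overall strategy matches the paper's: reduce to uniform control of the partial sums $\Phi_\d(j)=\sum_{i\le j}d(i)(d(i)-2)$ via \Cref{lem:ksum-difference}, then argue that the crossing index cannot shift enough to change $R$ by more than $o(M_H)$. The reduction in your first paragraph and item (i) of your second paragraph are exactly what the paper does.

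The gap is in your mechanism for controlling the crossing. Your claim that $\Phi_{\d_H}$ crosses zero with slope comparable to $\gamma\co$ is false in general: the crossing may well occur among entries equal to $3$, where each term contributes only $3$. Your fallback of invoking $\MJ(\d_H)=\Theta(M_H)$ does not rescue this either, since that global statement says nothing about the \emph{local} rate of increase of $\Phi_{\d_H}$ near $j_{\d_H}$. Bounding the index displacement by (deficit)/(slope) and then multiplying by a degree bound is therefore unjustified as written.

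The paper sidesteps the index displacement entirely with a much simpler observation. Since $\d_H$ is non-decreasing and $\Phi_{\d_H}$ strictly increased at $j_H$, one has $d_H(j_H)\ge 3$, hence every entry of $\d_H$ past $j_H$ is at least $3$. For $d\ge 3$ one has $d(d-2)\ge d$, so the increase in $\Phi$ past $j_H$ dominates the tail degree-sum directly: if $j^*$ is chosen so that $\sum_{i=j_H}^{j^*}d_S(i)=\alpha^{1/2}M_H$ (with $\alpha M_H$ the $\Phi$-deficit), then $\Phi_{\d_S}(j^*)>0$, forcing $j_S\le j^*$ and hence $\sum_{i=j_H}^{j_S}d_S(i)\le\alpha^{1/2}M_H=o(M_H)$. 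The only wrinkle is that $\d_S$ (unlike $\d_H$) may have a few entries of value $\le 2$ beyond index $j_H$; their number is bounded by $\sum_{i\le 2}|n_i(\d_S)-n_i(\d_H)|$, which is $o(M_H)$ by \Cref{lem:dh-ds-comparison}(c). This is the missing idea you need in place of the slope argument.
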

	\begin{proof}
		Again, we show that the claim holds deterministically when $\d_S$ satisfies the bounds given in \Cref{lem:dh-ds-comparison}. Since these concentration results hold with probability $1 - o(1)$, this is sufficient to prove the claim. Define $j_H := j(\d_H)$ and $j_S := j(\d_S)$. Without loss of generality, we assume that $j_H \leq j_S$, as the proof is symmetric in the converse case. We also assume that $j_H < n$, otherwise we know that $R(\d_S),\ R(\d_H) \leq \Delta$ and the result is immediate. These assumptions imply that $\sum_{i = 1}^{j_H} d_H(i) (d_H(i) - 2) > 0$. Thus, \Cref{lem:ksum-difference} implies that
		\begin{align*}
			\sum_{i=1}^{j_H} d_S(i) (d_S(i) - 2) \geq -\alpha M_H
		\end{align*}
		for some $\alpha\to 0$. We can choose $\alpha$ such that $\alpha = \omega(\delta^{1/16})$. We also know that the number of entries in each sequence with value $0$, $1$, or $2$ differs by at most $3\co^5 + \sum_{i=0}^2 \frac{\gamma n_i(\d_H)}{\co^{3}}$. We also know that $d_H(j_H) \geq 3$, as otherwise $j_H = n$ by definition. Therefore, we know that there are at most $3\co^5 + \sum_{i=0}^2 \frac{\gamma n_i(\d_H)}{\co^{3}}$ entries in $\d_S$ with value at most $2$ and index at least $j_H$. Choose $j^*$ to be the smallest integer such that $\sum_{i=j_H}^{j^*} d_S(i) \geq \alpha^{1/2} M_H$. Then
		\begin{align*}
			\sum_{i=1}^{j^*} d_S(i) (d_S(i) - 2) &\geq - \left(\alpha M_H + 3\co^5 + \sum_{i=0}^2 \frac{\gamma n_i(\d_H)}{\co^{3}} \right) + \alpha^{1/2}M_H\\
			&\geq \frac{1}{2}\alpha^{1/2}M_H.
		\end{align*}
		Thus, $j_S \leq j^*$. Therefore, $R(\d_S) \geq \sum_{i=j^*}^{s} d_S(i)$. Then \Cref{lem:m-concentration,lem:ksum-difference} imply that 
		\begin{align*}
			\sum_{i=j_S}^{s} d_S(i) &= M_S - \sum_{i=1}^{j_S-1} d_S(i) = M_S - \sum_{i=1}^{j_H-1} d_S(i) - \sum_{i=j_H}^{j_S - 1} d_S(i)\\
			&= M_H - \sum_{i=1}^{j_H-1} d_H(i) - \sum_{i=j_H}^{j_S - 1} d_S(i) + o(M_H)\\
			&= R(\d_H) - o(M_H),
		\end{align*}
		since $j_S \leq j^*$ and $\sum_{i=j_H}^{j^*} d_S(i) \geq \alpha^{1/2} M_H$ by definition. This completes the proof. 		
	\end{proof}

	Now we prove \Cref{thm:main-gc-pretty}. Let $\Ds$ be the set of all possible degree sequences of $G[S]$, and let $\goodds \subset \Ds$ be the set containing all the sequences that satisfy \Cref{lem:dh-ds-comparison}. Implicitly, $\goodds$ depends on the choice of $\delta$. For convenience and brevity we omit this dependence. 

	\begin{proof}[Proof of \Cref{thm:main-gc-pretty}]
		\Cref{lem:degree-0-vertices} implies that $n(\d_S^*)$, the number of non-isolated vertices in $G[S]$, is \aas\ $n(\d_H^*)(1 + o(1)) = (|S| - n_0(\d_H))(1 + o(1))$.
		\Cref{lem:m-concentration,lem:joos-m-concentration,lem:rd-concentration} imply that for all $\d_S \in \mathcal{D}_S'$, we have that 
		\begin{align*}
			R(\d_S) - R(\d_H) = o(\MJ(\d_H)), \quad \MJ(\d_S) \sim \MJ(\d_H).
		\end{align*}
		Since $\MJ(\d_H) = \Theta(M(\d_H))$, this implies that $\d_H$ is well-behaved, and thus $\d_S$ is well-behaved for all $\d_S \in \goodds$. Now suppose that $R(\d_H) \geq \varepsilon \MJ(\d_H)$ for some $\varepsilon > 0$. Then it immediately follows that $R(\d_S) \geq \frac{1}{2}\varepsilon \MJ(\d_S)$ for all $\d_S \in \goodds$. Then \Cref{thm:joos}(b) implies that there exists some $\gamma := \gamma\left( \frac{1}{2}\varepsilon \right) > 0$ such that the probability that $G[S]$ contains a component with at least $\gamma n(\d_S^*)$ vertices is $1 - o(1)$. Let $A$ be the event that $G[S]$ contains a component with at least $\frac{1}{2}\gamma (|S| - n_0(\d_H))$ vertices. Then the law of total probability gives that
		\begin{align*}
			\Prob{A} = \sum_{\d_S \in \Ds} \CProb{A}{\d_S} \Prob{\d_S} = \sum_{\d_S \in \goodds} \CProb{A}{\d_S} \Prob{\d_S} + o(1) = 1 - o(1).
		\end{align*}
		The argument for the converse case is very similar. Suppose that $R(\d_H) \leq \delta'\MJ(\d_H)$ for some $\delta' \to 0$. Then immediately it follows that $R(\d_S) \leq 2\delta' \MJ(\d_S)$ for all $\d_S \in \goodds$. Then \Cref{thm:joos}(a) and the law of total probability imply that for every $\gamma > 0$, the probability that $G[S]$ contains a component with at least $\gamma (|S| - n_0(\d_H))$ vertices is $o(1)$. Since $\d_S \in \goodds$ \aas\, this completes the proof.
	\end{proof}

\section{Site percolation: random induced subgraphs}\label{sec:perc}

\begin{proof}[Proof of \Cref{lem:degree-sequence-concentration-perc}]
	Immediately we know that $\Exp{|S|} = np$, and linearity of expectation gives that
	\begin{align*}
		\Exp{d(S)} = \sum_{i \in [n]} d(i) \Prob{i \in S} = p M.
	\end{align*}
	First we argue concentration of $|S|$. The Chernoff bound given in \eqref{eqn:chernoff} implies that 
	\begin{align*}
		\Prob{| |S| - \Exp{|S|}| \geq \varepsilon np} \leq 2 \exp\left( - \frac{np\varepsilon^2}{3} \right).
	\end{align*}
	Letting $\varepsilon = 3\sqrt{\log n / pn}$, it immediately follows that this probability is at most $2n^{-3}$. This completes the proof of (a). For part (b), we construct a martingale to show concentration of $d(S)$. At step $i$, for $i \in [n]$, reveal whether vertex $i$ is in $S$. 
	Define $M_i = d(S\cap[i])$. Then it follows immediately that $|M_i - M_{i-1}| \leq d(i)$ for all $i \leq n$ and $M_n = d(S)$. Thus, Azuma's inequality implies that
	\begin{align*}
		\Prob{|d(S) - pM| \geq \alpha} \leq 2\exp\left( \frac{-\alpha^2}{2 \sum_{i\in V(G)}d(i)^2} \right).
	\end{align*}
	By assumption, $\Delta(\d) = o\left(p^6 \frac{\sqrt{M}}{\log^6 M} \right)$, which implies that
	\begin{align*}
		\sum_{i\in V(G)}d(i)^2 \leq \Delta(\d) \sum_{i \in [n]} d(i) = o\left( p^6\frac{M^{3/2}}{\log^6 M} \right).
	\end{align*} 
	Thus, by setting $\alpha = p^3M^{3/4}$, this probability is $M^{-\omega(1)}$, which proves that $S$ \aas\ satisfies condition (b). For the remainder of this proof, we call a set $S$ ``good'' if $|S| = pn\left( 1 \pm 3\sqrt{\log n}/\sqrt{pn} \right)$ and $d(S) = pM \left( 1 \pm p^2M^{-1/4} \right)$. It follows that $S$ is good with probability at least $1 - 3n^{-3}$. 
	
	Now we focus on part (c). Let $S$ be some arbitrary ``good'' subset of $[n]$, and recall that $\gamma(S) = d(S)/M$. Then \Cref{lem:dh-ds-comparison}(a) applies and thus it follows that 
	\begin{align*}
		\CProb{d_S(v) = \gamma(S) d(v)\left( 1 \pm  8\delta^{1/64} \right)\ \mbox{for all $v \in S$ such that $d(v) > \cop[\gamma(S)]$}}{S} = 1 - o(1).
	\end{align*}
	By parts (a) and (b) of this lemma, we know that $S$ is good with probability $1 - o(1)$. If $S$ is good, then $\gamma(S) = p(1 \pm p^2M^{-1/4})$. Therefore, $\cop[\gamma(S)] < 2\cop$ for every good set $S$. Thus, the probability that there exists some $v \in S$ with $d(v) > 2\delta^{1/32}\cop$ with induced degree that is not $p d(v) \left( 1 \pm 8 \delta^{1/64} \right)(1 \pm p^2M^{-1/4})$ is $o(1)$. Since $\delta = \omega(M^{-1/4})$, it follows that $(1 \pm 8\delta^{1/64})(1 \pm p^2M^{-1/4}) = 1 \pm 9\delta^{1/64}$.	This proves part (c).
	
	Finally, we focus on part (d) of the lemma. Recall that $n_j(\d)$ is the number of entries of $\d$ equal to $j$, or equivalently (if $\d$ is graphical) the number of vertices with degree $j$ in a graph with degree sequence $\d$. Let $S_j$ be the set of all degree $j$ vertices in $S$. We can express $|S_j|$ as a sum over all vertices in $G$ with degree $i$:
	\begin{align*}
		|S_j| &= \sum_{d(i) = j} \ind{i \in S},
	\end{align*}
	where $\ind{i \in S}$ is an independent Bernoulli random variable with $\Prob{\ind{i \in S} = 1} = p$ for all $i \in [n]$. 
	It follows from linearity of expectation that $\Exp{|S_j|} = p n_j(\d)$. Since these indicators are independent random variables, the Chernoff bound given in \eqref{eqn:chernoff} implies that
	\begin{align*}
		\Prob{||S_j| - p n_j(\d)| > \alpha_j p n_j(\d)} \leq 2\exp\left( -\frac{1}{3}\alpha_j^2 p n_j(\d) \right)
	\end{align*}
	for each $j \leq \cop$. 
	Define $\alpha_j = \left( \frac{6\log \cop}{p n_j(\d)}\right)^{1/2}$. Then 
	\begin{align*}
		\exp\left( -\frac{2\alpha_j^2}{n_j(\d)} \right) = \exp\left( -2\log\cop\right).
	\end{align*}
	Setting $\alpha_j = (6p n_j(\d)\log\cop)^{1/2}$ and performing the union bound over $\cop$ such events implies that the probability that each $S_j$ is within of its expectation is at most $\cop^{-2}$. 
	Suppose for some particular $j$ that $pn_j(\d) \geq p^{-2}\cop^{6}\log^2 M$. Then, with probability $1 - o(1)$ it follows that
	\begin{align*}
		||S_j| - p n_j(\d)| \leq (6 p n_j(\d)\log\cop)^{1/2} \leq (6\log \cop)^{1/2} \frac{pn_j(\d)}{p^{-1}\cop^{3}\log M}.
	\end{align*}
	Now suppose that $pn_j(\d) < p^{-2}\cop^{6}\log^2 M$. Then it follows that 
	\begin{align*}
		||S_j| - p n_j(\d)| \leq (6p n_j(\d)\log\cop)^{1/2} < (6\log\cop)^{1/2} p^{-1}\cop^{3}\log M.
	\end{align*}
	Noting that $\log \cop \leq \frac{1}{6}\log M$, it follows that \aas\ for all $j \leq \cop$
	\begin{align}\label{eqn:sj-concentration}
		||S_j| - p n_j(\d)| \leq  \frac{p^2n_j(\d)}{\cop^{3}\sqrt{\log M}} + \delta^{1/16}\cop^{4}\sqrt{\log M}.
	\end{align}
	Suppose that $S$ is an arbitrary good subset of $[n]$ which also satisfies the concentration bounds given in \Cref{eqn:sj-concentration} for all $j \in \{0, \dots, \co\}$. Since $S$ is good, it follows that $\gamma(S) = p(1 \pm p^2 M^{-1/4})$. The set $S$ being good also implies that the conditions of \Cref{lem:binomial-degrees} are met, and also that $\frac{1}{3}p\cop < \frac{1}{2}\gamma(S)\cop[\gamma(S)]$. Recall from \Cref{def:dh} the definition of $n_i(\d_H(S))$, the number of entries in $\d_H$ (for a given set $S$) with value $i$. Since $i \leq \frac{1}{2}\gamma(S)\cop[\gamma(S)]$, 
	\begin{align*}
		n_i(\d_H(S)) &= \sum_{j \leq \cop[\gamma(S)]} |S_j| \Prob{Z_j(S) = i} \pm \left( 1 +  o(M^{-5}) \right),
	\end{align*}
	where $Z_j(S) \sim \Bin{j, \gamma(S)}$.
	Suppose $i \leq \frac{1}{3}p\cop = \frac{1}{3} \gamma(S) \cop[\gamma(S)]$. Since $S$ is good, the Chernoff bound \eqref{eqn:chernoff} implies that $\Prob{Z_j(S) = i} = o(M^{-5})$ for all $j \geq \frac{1}{2}\cop$. Thus, for $i \leq \frac{1}{3}p\cop$, 
	\begin{align}\label{eqn:ndhi}
		n_i(\d_H(S)) &= \sum_{j \leq \cop} |S_j| \Prob{Z_j(S) = i} \pm  2.
	\end{align}
	Now we compare the summation to the value of $\tilde{w}_i$. If $d(v) \leq \cop$, then for all $i \leq d(v)$ it follows that
	\begin{align*}
		\frac{\Prob{X_{d(v)} = i}}{\Prob{Z_{d(v)} = i}} &= \left( \frac{p}{\gamma(S)} \right)^i \left( \frac{1-p}{1-\gamma(S)} \right)^{d(v) - i}\\
				&= \left( 1 + O\left( \frac{p^2 i}{M^{1/4}} \right) \right) \left( 1 + O\left( \frac{p^3 (d(v) - i)}{M^{1/4}} \right) \right)\\
				&= 1 + O\left( \frac{p^2 \cop}{M^{1/4}} \right).
	\end{align*}
	Recall that the conditions on $p$ (specifically \Cref{eqn:conditions-perc}) imply that $p = \omega(M^{-1/13})$. This implies that $\frac{p^2 \cop}{M^{1/4}} \leq \frac{p}{M^{1/100} \cop^3}$.
	Therefore, 
	\begin{align}\label{eqn:zj-xj}
		\sum_{j \leq \cop} |S_j| \Prob{Z_j(S) = i} = \sum_{j \leq \cop} |S_j| \Prob{X_j = i} \left( 1 + o \left( \frac{p}{\cop^3} \right) \right).
	\end{align}
	Since $i \leq \frac{1}{3}p\cop$, the Chernoff bound given in \eqref{eqn:chernoff} implies that
	\begin{align*}
		\tilde{w}_i = p\sum_{v \in V} \Prob{X_{d(v)} = i} = p\sum_{j \leq \cop} n_j(\d) \Prob{X_j = i} + o(1).
	\end{align*}	
	Since we assume that $S$ satisfies the concentration inequalities given in \Cref{eqn:sj-concentration}, it follows that
	\begin{align}\label{eqn:xj-wi}
		\sum_{j \leq \cop} |S_j| \Prob{X_j = i} &= \sum_{j \leq \cop} \Prob{X_j = i} \left(p n_j(\d) + o\left( \frac{p^2n_j(\d)}{\cop^{3} } + \cop^4\sqrt{\log M} \right) \right)\nonumber\\
		&= \sum_{j \leq \cop} \Prob{X_j = i} p n_j(\d) \left( 1 + o\left( \frac{p}{\cop^{3}} \right) \right) +  \sum_{j \leq \cop} \Prob{X_j = i} o\left(\frac{p\cop^{5}}{\sqrt{\log M}}\right)\nonumber \\
		&= \tilde{w}_{i} + o\left( \frac{p\tilde{w}_{i}}{\cop^{3}} + \frac{p\cop^{6}}{\sqrt{\log M}} \right).
	\end{align}
	Combining \Cref{eqn:ndhi,eqn:zj-xj,eqn:xj-wi} it follows that, conditional on the aforementioned good set $S$,
	\begin{align}\label{eqn:condexp-yi}
		n_i(\d_H(S)) = \tilde{w}_{i} + o\left( \frac{p\tilde{w}_{i}}{\cop^{3}} + \frac{p\cop^{6}}{\sqrt{\log M}} \right). 
	\end{align}
	Since $S$ is good, the pair $(\d, S)$ also satisfy the conditions of \Cref{lem:dh-ds-comparison}(c). This implies that, for this fixed set $S$, \aas\
	\begin{align}\label{eqn:condexp-Yi}
		|n_i(\d_S) - n_i(\d_H(S))| \leq \frac{\gamma(S) n_i(\d_H(S))}{\cop[\gamma(S)]^{3}} + \gamma(S)\cop[\gamma(S)]^5.
	\end{align}
	Recall that $n_i(\d_A) =\tilde{w}_i \pm 1$ for all $i$. Thus, the bounds given in \labelcref{eqn:condexp-yi,eqn:condexp-Yi} and the triangle inequality imply that, conditional on the event that $S$ is good and also satisfies the concentration bounds given in \Cref{eqn:sj-concentration}, \aas\ 
	\begin{align*}
		|n_i(\d_S) - n_i(\d_A)| \leq |n_i(\d_S) - n_i(\d_H(S))| + |n_i(\d_H(S)) - n_i(\d_A)| \leq \frac{pn_i(\d_A)}{\cop^{3}}(1 + o(1)) + o\left( \frac{p\cop^{6}}{\sqrt{\log M}} \right)
	\end{align*}
	for all $i \leq \frac{1}{3} p\cop$. Since $S$ satisfies these conditions \aas, this proves part (d). 
\end{proof}
Recall the definition of $M_2(\d) = \sum_{i=1}^n d(i)^2$ where $d$ is a sequence of length $n$. Very similarly to the case where $S$ is fixed, it follows that $M(\d_S)$ and $M_2(\d_S)$ are both concentrated around their corresponding values for $\d_A$. We do not give the full proof here, as it is practically identical the proof of \Cref{lem:m-concentration}.
\begin{lemma}\label{lem:m-concentration-perc}
	$M(\d_A) = p^2 M (1 + o(1))$. With probability $1 - o(1)$, $M(\d_S) \sim M(\d_A)$ and $M_2(\d_S) \sim M_2(\d_A)$.
\end{lemma}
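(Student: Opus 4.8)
The plan is to run the proof of \Cref{lem:m-concentration} essentially verbatim, with $\d_A$ playing the role of $\d_H$, sums over $V$ replacing sums over $S$, $p$ replacing $\gamma$, $X_j$ replacing $Z_j$, $\cop$ replacing $\co$, and \Cref{lem:degree-sequence-concentration-perc}(c),(d) used where that proof used \Cref{lem:dh-ds-comparison}(a),(c). The deterministic identity is the quick half: from \Cref{def:perc-seq} one has $n_k(\d_A) = \tilde{w}_k \pm 1$ for every $k$, with $\tilde{w}_k = p\sum_{v\in V}\Prob{X_{d(v)} = k}$, so that
\begin{align*}
M(\d_A) \;=\; \sum_{k\le\Delta} k\, n_k(\d_A) \;=\; p\sum_{v\in V}\Exp{X_{d(v)}} \pm O(\Delta^2) \;=\; p^2 M \pm O(\Delta^2),
\end{align*}
and $\Delta^2 = o(p^2 M)$ follows from \eqref{eqn:conditions-perc} exactly as $\Delta^2 = o(\gamma^2 M)$ did in \Cref{lem:m-concentration}.

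For the probabilistic part I would condition on a \emph{good} set $S$ (in the sense of the proof of \Cref{lem:degree-sequence-concentration-perc}), which occurs \aas\ by parts (a)--(b) of that theorem and for which $\gamma(S) = p(1 + o(1))$ and the bounds \eqref{eqn:sj-concentration} on the degree-class sizes $|S_j|$ hold; the conditional statements are then transferred to unconditional ones by the law of total probability over good sets, as in the proof of \Cref{lem:degree-sequence-concentration-perc}. On this conditional space I would prove the statement for $M_2$ and deduce the one for $M(\d_S)$ from it via $d_S(i),d_A(i)\ge0$, exactly as in \Cref{lem:m-concentration}. To estimate $M_2(\d_S)-M_2(\d_A)$ I would split each sequence at the cutoff $2\delta^{1/32}p\cop = o(p\cop)$. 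For the block of entries above the cutoff, \Cref{lem:degree-sequence-concentration-perc}(c) gives $d_S(v) = p\,d(v)(1\pm9\delta^{1/64})$ for the relevant high-degree vertices of $S$, while the deterministic Chernoff estimate from the proof of \Cref{lem:dh-ds-comparison}(a) (applied with $p$ for $\gamma$ and $V$ for $S$) shows that $\d_A$ consists, above the cutoff, of roughly $p\,n_j(\d)$ entries near $p j(1\pm3\delta^{1/64})$ for each degree $j$ occurring in $\d$; the concentration \eqref{eqn:sj-concentration} of $|S_j|$ about $p\,n_j(\d)$ then matches these two blocks up to a $1+o(1)$ factor, so they contribute asymptotically equal amounts to $M_2(\cdot)$. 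For the block of entries below the cutoff, \Cref{lem:degree-sequence-concentration-perc}(d) controls $|n_i(\d_S)-n_i(\d_A)|$ on the whole range $i\le\tfrac13 p\cop$, and since every such entry is at most $2\delta^{1/32}p\cop$ the contribution of this range to either total is $o(p^2 M)$ by the same power bookkeeping as in \Cref{lem:m-concentration}; the small gap between the two cutoff indices is absorbed exactly as there, now using \Cref{lem:degree-sequence-concentration-perc}(d) in place of \Cref{lem:dh-ds-comparison}(c).

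The step I expect to be the main obstacle is the matching in the above-the-cutoff block: $\d_A$ is defined through a sum over all of $V$ whereas $\d_S$ lives on the random set $S$, so one has to verify that for a good $S$ the high-degree entries of the two sequences are in asymptotic correspondence --- something \Cref{lem:degree-sequence-concentration-perc}(c) does not say directly, since it compares $d_S(v)$ with $p\,d(v)$ rather than with $\d_A$. This is where the degree-class concentration \eqref{eqn:sj-concentration} and the estimate $\gamma(S)=p(1+o(1))$ for good $S$ do the work; once that correspondence is established, the rest is a transcription of the proof of \Cref{lem:m-concentration}.
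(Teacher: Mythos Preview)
Your proposal is correct and follows the same outline as the paper, which gives only a two-line sketch (``analogous to \Cref{lem:m-concentration}, using \Cref{lem:degree-sequence-concentration-perc} instead of \Cref{lem:dh-ds-comparison}; the same proof method works, noting that $(2\delta^{1/32}\gamma\cop)^2\frac{p\cop^6}{\sqrt{\log M}} = o(p^2 M/\cop)$''). Your deterministic computation of $M(\d_A)$ and the low-entry comparison via \Cref{lem:degree-sequence-concentration-perc}(d) match the paper exactly, and the inequality the paper singles out is precisely the power-bookkeeping check you allude to for the small-entry block.

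You are also right to flag the high-entry block as the genuine extra work: \Cref{lem:degree-sequence-concentration-perc}(c) relates $d_S(v)$ to $p\,d(v)$ rather than to $\d_A$, and $\d_A$ has length $\lfloor pn\rceil$ while $\d_S$ has length $|S|$, so the entry-by-entry comparison available in \Cref{lem:dh-ds-comparison}(a) has no literal analogue. Your fix via the degree-class concentration \eqref{eqn:sj-concentration} plus the Chernoff description of the large entries of $\d_A$ is sound. One small caveat: \eqref{eqn:sj-concentration} is stated in the paper only for $j\le\cop$, whereas the high-degree block involves $j$ up to $\Delta$; the same Chernoff-plus-union-bound argument extends to all $j\le\Delta$ without change, or alternatively a single Azuma bound on $\sum_{v\in S,\,d(v)>c} d(v)^2$ (exactly as used for $d(S)$ in the proof of \Cref{lem:degree-sequence-concentration-perc}(b)) gives the required concentration around $p\sum_{d(v)>c} d(v)^2$ directly. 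Either route closes the gap you identified and completes the transcription of \Cref{lem:m-concentration}.
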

\begin{proof}[Proof sketch]
	The proof of this claim is analogous to \Cref{lem:m-concentration}, using \Cref{lem:degree-sequence-concentration-perc} instead of \Cref{lem:dh-ds-comparison}. The same proof method works, noting that $(2\delta^{1/32} \gamma \cop)^2 \frac{p\cop^6}{\sqrt{\log M}} = o(p^2M /\cop)$. 
\end{proof}

\begin{remark}\label{rem:degree-0-perc}
		Recall that $\deg^*$ is the sequence $\deg$ with all entries equal to $0$ removed. An analogous argument to \Cref{lem:degree-0-vertices}, calling on \Cref{lem:degree-sequence-concentration-perc} instead of \Cref{lem:dh-ds-comparison}, implies that $n(\d_A^*) = \Omega(p^2n)$ and \aas\ $n(\d_A^*) \sim n(\d_S^*)$. 
\end{remark}

\section{Giant components in the percolated graph}

	The proof of \Cref{thm:perc-gc-nice} follows quickly from the previously proved results about the percolated random graph model and arguments analogous to those used to prove \Cref{thm:main-gc-pretty}. 

	\begin{lemma}\label{lem:mj-conc-perc}
		$\MJ(\d_A) = \Theta(M(\d_A))$, and \aas\ $\MJ(\d_A) - \MJ(\d_S) = o(M(\d_A))$. 
	\end{lemma}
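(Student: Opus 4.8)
The plan is to mirror the proof of \Cref{lem:joos-m-concentration} almost verbatim, using the dictionary $\d_H\leftrightarrow\d_A$, $Z_j\sim\Bin{j,\gamma}\leftrightarrow X_j\sim\Bin{j,p}$, $\co\leftrightarrow\cop$, \Cref{lem:dh-ds-comparison}(c)$\leftrightarrow$\Cref{lem:degree-sequence-concentration-perc}(d), and \Cref{lem:m-concentration}$\leftrightarrow$\Cref{lem:m-concentration-perc}. For every sequence $\d$ we have $\MJ(\d)=M(\d)-2n_2(\d)$ and $\MJ(\d)\le M(\d)$, so the first claim amounts to showing $n_2(\d_A)=O(\MJ(\d_A))$ (equivalently $M(\d_A)=O(\MJ(\d_A))$), and the second claim will follow from controlling $n_2(\d_S)-n_2(\d_A)$ together with the fact that \aas\ $M(\d_S)\sim M(\d_A)$.

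For the first claim I would first establish that there is a constant $C=C(\varepsilon)$ (recall $p<1-\varepsilon$) with $\Prob{X_{d(i)}=2}\le C\bigl(\Prob{X_{d(i)}=1}+\Prob{X_{d(i)}=3}\bigr)$ for every $i$. The cases $d(i)\in\{1,2\}$ are immediate: for $d(i)=2$ one has $\Prob{X_2=2}=p^2\le\tfrac{p}{2(1-p)}\cdot 2p(1-p)=\tfrac{p}{2(1-p)}\Prob{X_2=1}$, and $\tfrac{p}{2(1-p)}$ is bounded since $p<1-\varepsilon$. For $d(i)\ge3$ the ratio in question equals
\begin{align*}
\frac{\tfrac12\bigl(d(i)-1\bigr)p(1-p)}{(1-p)^2+\tfrac16\bigl(d(i)-1\bigr)\bigl(d(i)-2\bigr)p^2},
\end{align*}
which is decreasing in $d(i)$ once $p\,d(i)\ge 6$ and is at most $\tfrac3{1-p}$ when $p\,d(i)\le 6$, exactly as in \Cref{lem:joos-m-concentration}. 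Using $n_k(\d_A)=\tilde w_k\pm1=p\sum_{v\in V}\Prob{X_{d(v)}=k}\pm1$ this gives
\begin{align*}
\MJ(\d_A)\ge n_1(\d_A)+3n_3(\d_A)\ge p\sum_{v\in V}\bigl(\Prob{X_{d(v)}=1}+\Prob{X_{d(v)}=3}\bigr)-4\ge C^{-1}n_2(\d_A)+O(1),
\end{align*}
hence $M(\d_A)=\MJ(\d_A)+2n_2(\d_A)\le(1+2C)\MJ(\d_A)+O(1)$. Since $M(\d_A)\to\infty$ and $\MJ(\d_A)\le M(\d_A)$, this yields $\MJ(\d_A)=\Theta(M(\d_A))$, which by \Cref{lem:m-concentration-perc} is $\Theta(p^2M)$.

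For the second claim, note that $2\le\tfrac13 p\cop$ for $n$ large (since $\tfrac13 p\cop=\tfrac13\delta^{-1/16}\log M\to\infty$), so \Cref{lem:degree-sequence-concentration-perc}(d) applies with $i=2$ and gives, \aas,
\begin{align*}
\bigl|n_2(\d_S)-n_2(\d_A)\bigr|\le\frac{p\,n_2(\d_A)}{\cop^{3}}(1+o(1))+\frac{p\cop^{6}}{\sqrt{\log M}}.
\end{align*}
Since $n_2(\d_A)\le\tfrac12 M(\d_A)$ and $\tfrac{p\cop^{6}}{\sqrt{\log M}}=o(p^2M)=o(M(\d_A))$ — the same estimate already used in the proof sketch of \Cref{lem:m-concentration-perc}, which follows from \eqref{eqn:conditions-perc} — we obtain \aas\ $2n_2(\d_S)=2n_2(\d_A)+o(M(\d_A))$. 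Feeding this and $M(\d_S)\sim M(\d_A)$ from \Cref{lem:m-concentration-perc} into the identity $\MJ(\d)=M(\d)-2n_2(\d)$ gives, \aas,
\begin{align*}
\MJ(\d_S)-\MJ(\d_A)=\bigl(M(\d_S)-M(\d_A)\bigr)-2\bigl(n_2(\d_S)-n_2(\d_A)\bigr)=o(M(\d_A)),
\end{align*}
as required. I do not anticipate a genuine obstacle, since the argument is structurally identical to that of \Cref{lem:joos-m-concentration}; the only point requiring any care is the negligibility of the additive error $\tfrac{p\cop^{6}}{\sqrt{\log M}}$ relative to $M(\d_A)\sim p^2M$, which is exactly the routine bookkeeping with \eqref{eqn:conditions-perc} used throughout \Cref{sec:perc}.
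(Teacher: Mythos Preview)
Your proposal is correct and follows exactly the approach the paper indicates: the paper's own proof simply states that the argument is essentially the same as \Cref{lem:joos-m-concentration}, applying \Cref{lem:m-concentration-perc} in place of \Cref{lem:m-concentration} and \Cref{lem:degree-sequence-concentration-perc} in place of \Cref{lem:dh-ds-comparison}, and omits the details. You have carried out precisely this substitution, and the one estimate requiring care---that $\tfrac{p\cop^{6}}{\sqrt{\log M}}=o(p^2M)$---indeed follows from \eqref{eqn:conditions-perc} as you say.
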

	\begin{proof}
		The proof of this lemma is essentially the same as \Cref{lem:joos-m-concentration}, applying \Cref{lem:m-concentration-perc} instead of \Cref{lem:m-concentration} and \Cref{lem:degree-sequence-concentration-perc} instead of \Cref{lem:dh-ds-comparison}, so we omit the details.
	\end{proof}
	
	\begin{lemma}\label{lem:rd-conc-good}
		If $S$ is good, then \aas\ $|R(\d_S(S)) - R(\d_A)| = o(M(\d_A))$.
	\end{lemma}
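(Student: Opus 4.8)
The plan is to transfer the proof of \Cref{lem:rd-concentration} to the percolated model essentially verbatim, comparing $\d_S(S)$ directly with $\d_A$ (in place of $\d_H$) and using \Cref{lem:degree-sequence-concentration-perc} in place of \Cref{lem:dh-ds-comparison} and \Cref{lem:m-concentration-perc} in place of \Cref{lem:m-concentration}. Since $S$ is good, $\gamma(S) = d(S)/M = p(1 \pm p^2 M^{-1/4}) \sim p$, so $(\d, S)$ satisfies \eqref{eqn:conditions} and $\cop[\gamma(S)] \le 2\cop$; I would also assume, as we may, that $S$ satisfies the concentration bounds \eqref{eqn:sj-concentration}, since all but an $o(1)$-fraction of good sets do and that is all that is used when this lemma is invoked in the proof of \Cref{thm:perc-gc-nice}. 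For such a fixed $S$, the conclusions of \Cref{lem:degree-sequence-concentration-perc}(c),(d) hold \aas\ over $G$, as do $M(\d_S) \sim M(\d_A) = (1+o(1))p^2 M$ and $M_2(\d_S) \sim M_2(\d_A)$ from \Cref{lem:m-concentration-perc}.

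The one genuinely new ingredient needed is the percolation analogue of \Cref{lem:ksum-difference}: conditional on such an $S$, \aas\ $\sum_{i=1}^{k}(d_S(i) - d_A(i)) = o(M(\d_A))$ and $\sum_{i=1}^{k}(d_S(i)^2 - d_A(i)^2) = o(M(\d_A))$ for every $k \le |S|$, and (as there) the first bound follows from the second. For $k$ whose $k$th entries lie below $2\delta^{1/32}\cop$, one sums the count estimate of \Cref{lem:degree-sequence-concentration-perc}(d) (using $n_i(\d_A) = \tilde{w}_i \pm 1$) exactly as in \Cref{lem:ksum-difference}: the set $\{i \le k : d_S(i) \ne d_A(i)\}$ has size $O(\delta^{1/16} p \cop^{7} + \delta^{1/32} p |S| \cop^{-2})$, each such term contributes $O(\delta^{1/16} \cop^{2})$, and the product is $o(M(\d_A))$ since $\delta \to 0$ and $\cop^{9} = o(p^{2} M/\cop)$, the latter being a consequence of \eqref{eqn:conditions-perc}. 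For $k$ with large $k$th entries, one writes $\sum_{i \le k} d_S(i)^2 = M_2(\d_S) - \sum_{i > k} d_S(i)^2$, substitutes $M_2(\d_S) \sim M_2(\d_A)$ from \Cref{lem:m-concentration-perc}, and applies \Cref{lem:degree-sequence-concentration-perc}(c), which gives $d_S(v) = p\, d(v)(1 \pm 9\delta^{1/64})$ for all $v \in S$ with $d(v) > 2\delta^{1/32}\cop$.

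I expect the main obstacle to be the last step just used: closing the large-$k$ case also requires a deterministic description of the large entries of $\d_A$, and \Cref{lem:degree-sequence-concentration-perc}(d) only controls $n_i(\d_A)$ for $i \le \frac{1}{3} p\cop$. I would prove such a description in the style of \Cref{lem:dh-ds-comparison}(a): by \Cref{def:perc-seq} and the Chernoff bound \eqref{eqn:chernoff}, for $k \ge \delta^{1/32} p\cop$ the probability $\Prob{X_{d(v)} \le k}$ is $1 - o(M^{-3})$ when $p\,d(v) < (1 - \delta^{1/64})k$ and $o(M^{-3})$ when $p\,d(v) > (1 + \delta^{1/64})k$, so inserting these into $\tilde{N}$ pins the $k$th entry of $\d_A$, for such $k$, to within a factor $1 \pm O(\delta^{1/64})$ of $p$ times the degree in $G$ of the corresponding vertex; together with \Cref{lem:degree-sequence-concentration-perc}(c) this gives $\sum_{i > k} d_S(i)^2 = (1 + o(1))\sum_{i > k} d_A(i)^2 + o(M(\d_A))$ and hence the desired partial-sum bounds. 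With this in hand, the final step runs exactly as in the proof of \Cref{lem:rd-concentration}, now with the pair $(\d_S(S), \d_A)$: one locates $j(\d_A)$, uses the partial-sum closeness to bound the extra total degree accumulated before $j(\d_S)$, and concludes $R(\d_S(S)) = M(\d_S) - \sum_{i < j(\d_S)} d_S(i) = M(\d_A) - \sum_{i < j(\d_A)} d_A(i) + o(M(\d_A)) = R(\d_A) + o(M(\d_A))$, which is the claim.
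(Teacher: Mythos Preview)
Your proposal is correct and follows essentially the same route as the paper, which simply says the proof is ``very similar to the proof of \Cref{lem:rd-concentration}'' with \Cref{lem:m-concentration-perc} replacing \Cref{lem:m-concentration} and the count bounds for entries of value $0,1,2$ coming from \Cref{lem:degree-sequence-concentration-perc}(d) rather than \Cref{lem:dh-ds-comparison}(c). You have supplied considerably more detail than the paper does: in particular you correctly flag that a percolation analogue of \Cref{lem:ksum-difference} is needed, and that the large-$k$ case of that analogue requires a deterministic description of the large entries of $\d_A$, which you propose to obtain from \Cref{def:perc-seq} via Chernoff exactly as in the proof of \Cref{lem:dh-ds-comparison}(a). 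One small caveat: the lemma as stated asserts the conclusion for \emph{every} good $S$, whereas the passage from $\d_H(S)$ to $\d_A$ (equivalently, your large-entry comparison) genuinely uses the per-degree-class concentration \eqref{eqn:sj-concentration}, which is not part of the definition of ``good''; your remark that this extra assumption is harmless for the application in \Cref{thm:perc-gc-nice} is accurate, and the paper's omitted proof appears to make the same implicit assumption.
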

	\begin{proof}
		The proof of this claim is very similar to the proof of \Cref{lem:rd-concentration}, so we omit the details. The notable differences are that we apply \Cref{lem:m-concentration-perc} instead of \Cref{lem:m-concentration}, and that the numbers of entries in each sequence $\d_S$ or $\d_A$ with value $0$, $1$, or $2$ differ by at most $o\left( \frac{\gamma\cop^6}{\sqrt{\log M}} \right) + \sum_{i=0}^2 \frac{\gamma n_i(\d_H)}{\co^{3}}$.
	\end{proof}
	
	Similar to the case where $S$ is fixed, we define $\Dsp$ to be the set of all possible sequences of all induced subgraphs of every $G \in \Gnd$. We then define $\gooddsp$ to be the subset of these sequences that satisfy the bounds stated in parts (a)--(d) of \Cref{lem:degree-sequence-concentration-perc}. Again, the definition of $\gooddsp$ is implicitly dependent on $\delta$, but we omit this.
	
	\begin{proof}[Proof of \Cref{thm:perc-gc-nice}]
		\Cref{lem:degree-sequence-concentration-perc} implies that $n(\d_S) = |S| = np \pm 3 \sqrt{np\log n}$ for all $\d_S \in \gooddsp$. \Cref{rem:degree-0-perc} then implies that 
		\begin{align*}
			n(\d_S^*) \sim n(\d_H^*) = \lfloor np \rfloor - n_0(\d_H) \pm 3 \sqrt{np\log n}.
		\end{align*}
		Since $np - n_0(\d_H) \geq p^2 n$ (by \Cref{rem:degree-0-perc}), this implies that $n(\d_S^*) = (np - n_0(\d_H))(1 + o(1))$. Then
		\Cref{lem:degree-sequence-concentration-perc} and \Cref{lem:m-concentration-perc,lem:rd-conc-good} imply that if $R(\d_A) \geq \varepsilon \MJ(\d_A)$ for some $\varepsilon > 0$, then \aas\ $R(\d_S) \geq \frac{1}{2}\varepsilon \MJ(\d_S)$, and conversely that if $R(\d_A) \leq \delta' \MJ(\d_A)$ for some $\delta'\to 0$, then \aas\ $R(\d_S) \leq 2\delta'\MJ(\d_A)$. \Cref{lem:m-concentration-perc} also implies that $\d_A$ is well-behaved and \aas\ $\d_S$ is well-behaved. Then applying the law of total probability and \Cref{thm:joos} to each possible choice of $\d_S$ completes the proof. 
	\end{proof}

\end{document}